\title{Computing Eigenvalues of Large Scale Sparse Tensors Arising from
  a Hypergraph\thanks{Version 1.0. Data: \today.}}
\author{
  {Jingya Chang}\thanks{%
    Department of Applied Mathematics, The Hong Kong Polytechnic University,
    Hung Hom, Kowloon, Hong Kong; and
    School of Mathematics and Statistics, Zhengzhou University, Zhengzhou 450001, China
    ({\tt jychang@zzu.edu.cn}).} \and
  {Yannan Chen}\thanks{%
    School of Mathematics and Statistics, Zhengzhou University, Zhengzhou 450001, China
    ({\tt ynchen@zzu.edu.cn}).
    This author was supported by the
    National Natural Science Foundation of China (Grant No. 11401539, 11571178),
    the Development Foundation for Excellent Youth Scholars of
    Zhengzhou University (Grant No. 1421315070), and
    the Hong Kong Polytechnic University Postdoctoral Fellowship.} \and
  {Liqun Qi}\thanks{%
    Department of Applied Mathematics, The Hong Kong Polytechnic University,
    Hung Hom, Kowloon, Hong Kong ({\tt maqilq@polyu.edu.hk}).
    This author's work was partially supported by the Hong Kong Research Grant Council
    (Grant No. PolyU 501212, 501913, 15302114 and 15300715).}
}
\begin{document}

\maketitle

\begin{abstract}
  The spectral theory of higher-order symmetric tensors is an important tool
  to reveal some important properties of a hypergraph via its
  adjacency tensor, Laplacian tensor, and signless Laplacian tensor.
  Owing to the sparsity of these tensors, we propose an efficient approach to
  calculate products of these tensors and any vectors.
  Using the state-of-the-art L-BFGS approach,
  we develop a first-order optimization algorithm
  for computing H- and Z-eigenvalues of these large scale sparse tensors (CEST).
  With the aid of the Kurdyka-{\L}ojasiewicz property, we prove that
  the sequence of iterates generated by CEST converges to an eigenvector of the tensor.
  When CEST is started from multiple randomly initial points,
  the resulting best eigenvalue could touch the extreme eigenvalue with a high probability.
  Finally, numerical experiments on small hypergraphs show that
  CEST is efficient and promising. Moreover, CEST is capable of computing
  eigenvalues of tensors corresponding to a hypergraph with millions of vertices.
\end{abstract}

\begin{keywords}
  Eigenvalue, hypergraph, Kurdyka-{\L}ojasiewicz property,
  Laplacian tensor, large scale tensor, L-BFGS, sparse tensor, spherical optimization.
\end{keywords}

\begin{AMS}
  05C65, 15A18, 15A69, 65F15, 65K05, 90C35, 90C53
\end{AMS}

\pagestyle{myheadings}
\thispagestyle{plain}
\markboth{JINGYA CHANG, YANNAN CHEN, AND LIQUN QI}{COMPUTING EIGENVALUES OF SPARSE TENSORS}


\newtheorem{Theorem}{Theorem}[section]
\newtheorem{Definition}[Theorem]{Definition}
\newtheorem{Lemma}[Theorem]{Lemma}

\newcommand{\REAL}{\mathbb{R}}
\newcommand{\T}{\top}
\newcommand{\st}{\mathrm{s.t.}}
\newcommand{\diff}{\mathrm{d}}
\newcommand{\vt}[1]{{\bf #1}}
\newcommand{\x}{\vt{x}}
\newcommand{\g}{\vt{g}}
\newcommand{\s}{\vt{s}}
\newcommand{\y}{\vt{y}}
\newcommand{\z}{\vt{z}}
\newcommand{\p}{\vt{p}}
\newcommand{\dvec}{\vt{d}}
\newcommand{\Adj}{\mathcal{A}}
\newcommand{\Dag}{\mathcal{D}}
\newcommand{\Lap}{\mathcal{L}}
\newcommand{\sLp}{\mathcal{Q}}
\newcommand{\Ten}{\mathcal{T}}
\newcommand{\spT}{\mathcal{S}}
\newcommand{\Iid}{\mathcal{I}}
\newcommand{\Eid}{\mathcal{E}}
\newcommand{\Btn}{\mathcal{B}}

\newcommand{\SPHERE}{\mathbb{S}^{n-1}}
\newcommand{\Path}[1]{\texttt{#1}}

\section{Introduction}

Since 1736, Leonhard Eular posed a problem called ``seven bridges of
K\"{o}nigsberg'', graphs and hypergraphs have been used to model
relations and connections of objects in science and engineering,
such as molecular chemistry \cite{KoS-95,KoS-98}, image processing
\cite{GWTJD-12,YTW-12}, networks \cite{KHT-09,GZCN-09}, scientific
computing \cite{FMS-10,KPCA-12}, and very large scale integration
(VLSI) design \cite{KAKS-99}. For large scale hypergraphs, spectral
hypergraph theory provides a fundamental tool. For
instance, hypergraph-based spectral clustering has been used in
complex networks \cite{MN-12}, date mining \cite{LHSDZ-14}, and
statistics \cite{RCY-11,LR-15}. In computer-aided design
\cite{ZSC-99} and machine learning \cite{GD-15}, researchers
employed the spectral hypergraph partitioning. Other applications
include the multilinear pagerank \cite{GLY-15} and estimations of
the clique number of a graph \cite{BP-09,XQ-15}.







Recently, spectral hypergraph theory is proposed to explore
connections between the geometry of a uniform hypergraph and H- and
Z-eigenvalues of some related symmetric tensors. Cooper and Dutle
\cite{CoD-12} proposed in 2012 the concept of adjacency tensor for a
uniform hypergraph. Two years later, Qi \cite{Qi-14} gave
definitions of Laplacian and signless Laplacian tensors
associated with a hypergraph. When an even-uniform hypergraph is connected,
the largest H-eigenvalues of the Laplacian and signless
Laplacian tensors are equivalent if and only if the hypergraph is
odd-bipartite \cite{HQX-15}. This result gives a certification to check whether a
connected even-uniform hypergraph is odd-bipartite or not.

We consider the problem of how to compute H- and Z-eigenvalues of the adjacency tensor,
the Laplacian tensor, and the signless Laplacian tensor arising from a uniform hypergraph.
Since the adjacency tensor and the signless Laplacian tensor are symmetric and nonnegative,
an efficient numerical approach named the Ng-Qi-Zhou algorithm \cite{NQZ-09} could
be applied for their largest H-eigenvalues and associated eigenvectors.
Chang et al. \cite{CPZ-11} proved that the Ng-Qi-Zhou algorithm converges
if the nonnegative symmetric tensor is primitive.
Liu et al. \cite{LZI-10} and Chang et al. \cite{CPZ-11} enhanced the Ng-Qi-Zhou algorithm
and proved that the enhanced one converges if the nonnegative symmetric tensor is irreducible.
Friedland et al. \cite{FGH-13} studied weakly irreducible nonnegative symmetric tensors
and showed that the Ng-Qi-Zhou algorithm converges with an R-linear convergence rate
for the largest H-eigenvalue of a weakly irreducible nonnegative symmetric tensor.
Zhou et al. \cite{ZQW-13a,ZQW-13b} argued that the Ng-Qi-Zhou algorithm is Q-linear convergence.
They refined the Ng-Qi-Zhou algorithm and reported that they could obtain
the largest H-eigenvalue for any nonnegative symmetric tensors.
A Newton's method with locally quadratic rate of convergence is established by Ni and Qi \cite{NiQ-15}.

With respect to the eigenvalue problem of general symmetric tensors,
there are two sorts of methods.
The first one could obtain all (real) eigenvalues of a tensor with only several variables.
Qi et al. \cite{QWW-09} proposed a direct approach based on the resultant.
An SDP relaxation method coming from polynomial optimization was established by
Cui et al. \cite{CDN-14}. Chen et al. \cite{CHZ-15} preferred to use homotopy methods.
Additionally, mathematical softwares \emph{Mathematica} and \emph{Mapple}
provide respectively subroutines ``\texttt{NSolve}'' and ``\texttt{solve}'' which could solve
polynomial eigen-systems exactly.
However, if we apply these methods for eigenvalues of a symmetric tensor with
dozens of variables, the computational time is prohibitively long.

The second sort of methods turn to compute an (extreme) eigenvalue of a symmetric tensor,
since a general symmetric tensor has plenty of eigenvalues \cite{Qi-05} and
it is NP-hard to compute all of them \cite{HiL-13}.
Kolda and Mayo \cite{KoM-11,KoM-14} proposed a spherical optimization model
and established shifted power methods. Using fixed point theory,
they proved that shifted power methods converge to an eigenvalue
and its associated eigenvector of a symmetric tensor.
For the same spherical optimization model, Hao et al. \cite{HCD-15} prefer to
use a faster subspace projection method.
Han \cite{Han-13} constructed an unconstrained merit function that is indeed
a quadratic penalty function of the spherical optimization.
Preliminary numerical tests showed that these methods could compute
eigenvalues of symmetric tensors with dozens of variables.

How to compute the (extreme) eigenvalue of the Laplacian tensor coming from
an even-uniform hypergraph with millions of vertices?
It is expensive to store and process a huge Laplacian tensor directly.


In this paper, we propose to store a uniform hypergraph by a matrix,
whose row corresponds to an edge of that hypergraph.
Then, instead of generating the large scale Laplacian tensor of the hypergraph explicitly,
we give a fast computational framework for products of the Laplacian tensor and any vectors.
The computational cost is linear in the size of edges and quadratic in
the number of vertices of an edge. So it is cheap.
Other tensors arising from a uniform hypergraph, such as
the adjacency tensor and the signless Laplacian tensor,
could be processed in a similar way.
These computational methods compose our main motivation.

Since products of any vectors and large scale tensors
associated with a uniform hypergraph could be computed economically,
we develop an efficient first-order optimization algorithm
for computing H- and Z-eigenvalues of adjacency, Laplacian, and signless Laplacian
tensors corresponding to the even-uniform hypergraph.
In order to obtain an eigenvalue of an even-order symmetric tensor, we minimize
a smooth merit function in a spherical constraint,
whose first-order stationary point is an eigenvector associated with a certain eigenvalue.
To preserve the spherical constraint, we derive an explicit formula
for iterates using the Cayley transform.
Then, the algorithm for a spherical optimization
looks like an unconstrained optimization.
In order to deal with large scale problems,
we explore the state-of-the-art L-BFGS approach to generate
a gradient-related direction and the backtracking search
to facilitate the convergence of iterates.
Based on these techniques, we obtain the novel algorithm (CEST) for
computing eigenvalues of even-order symmetric tensors.
Due to the algebraic nature of tensor eigenvalue problems,
the smooth merit function enjoys the Kurdyka-{\L}ojasiewicz (KL) property.
Using this property, we confirm that the sequence of iterates generated by CEST
converges to an eigenvector corresponding to an eigenvalue.
Moreover, if we start CEST from multiple initial points
sampled uniformly from a unit sphere, it can be proved that the resulting best merit function value
could touch the extreme eigenvalue with a high probability.

Numerical experiments show that the novel algorithm CEST is
dozens times faster than the power method for
eigenvalues of symmetric tensors related with small hypergraphs.
Finally, we report that CEST could compute H- and Z-eigenvalues and
associated eigenvectors of symmetric tensors involved in
an even-uniform hypergraph with millions of vertices.

The outline of this paper is drawn as follows.
We introduce some latest developments on spectral hypergraph theory in Section 2.
Section 3 address the computational issues on products of a vector and
large scale sparse tensors arising from a uniform hypergraph.
In Section 4, we propose the new optimization algorithm
based on L-BFGS and the Cayley transform.
The convergence analysis of this algorithm is established in Section 5.
Numerical experiments reported in Section 6 show that the new
algorithm is efficient and promising. Finally, we conclude this paper in Section 7.

\section{Preliminary on spectral hypergraph theory}

We introduce the definitions of eigenvalues and spectral of a symmetric tensor
and then discuss developments in spectral hypergraph theory.

The conceptions of eigenvalues and associated eigenvectors of a symmetric tensor are
established by Qi \cite{Qi-05} and Lim \cite {Lim-05} independently.
Suppose
\begin{equation*}
    \Ten = (t_{i_1 \cdots i_k})\in\REAL^{[k,n]},
    \qquad \text{ for }~ i_j=1,\ldots,n, j=1,\ldots,k,
\end{equation*}
is a $k$th order $n$ dimensional symmetric tensor.
Here, the symmetry means that the value of $t_{i_1 \cdots i_k}$
is invariable under any permutation of its indices.
For $\x\in\REAL^n$, we define a scalar
\begin{equation*}
    \Ten\x^k \equiv \sum_{i_1=1}^n\cdots\sum_{i_k=1}^n
      t_{i_1\cdots i_k}x_{i_1}\cdots x_{i_k} \in\REAL.
\end{equation*}
Two column vectors $\Ten\x^{k-1}\in\REAL^n$
and $\x^{[k-1]}\in\REAL^n$ are defined with elements
\begin{equation*}
    (\Ten\x^{k-1})_i \equiv \sum_{i_2=1}^n\cdots\sum_{i_k=1}^n
      t_{ii_2\cdots i_k}x_{i_2}\cdots x_{i_k}
\end{equation*}
and $(\x^{[k-1]})_i \equiv x_i^{k-1}$ for $i=1,\ldots,n$ respectively.
Obviously, $\Ten\x^k = \x^\T(\Ten\x^{k-1})$.

If there exist a real $\lambda$ and a nonzero vector $\x\in\REAL^n$ satisfying
\begin{equation}\label{H-eig-def}
    \Ten\x^{k-1} = \lambda\x^{[k-1]},
\end{equation}
we call $\lambda$ an H-eigenvalue of $\Ten$
and $\x$ its associated H-eigenvector.
If the following system\footnote{%
  Qi \cite{Qi-05} pointed out that the tensor $\Ten$ should be regular, i.e.,
  zero is the unique solution of $\Ten\x^{k-1}=\vt{0}$.}
\begin{equation}\label{Z-eig-def}
\left\{\begin{aligned}
    \Ten\x^{k-1} &= \lambda\x, \\
    \x^{\T}\x &= 1,
\end{aligned}\right.
\end{equation}
has a real solution $(\lambda,\x)$,
$\lambda$ is named a Z-eigenvalue of $\Ten$ and
$\x$ is its associated Z-eigenvector.
All of the H- and Z-eigenvalues of $\Ten$ are called
its H-spectrum $\mathrm{Hspec}(\Ten)$ and
Z-spectrum $\mathrm{Zspec}(\Ten)$ respectively.


These definitions on eigenvalues of a symmetric tensor have
important applications in spectral hypergraph theory.

\begin{Definition}[Hypergraph]
  We denote a hypergraph by $G=(V,E)$, where $V=\{1,2,\ldots,n\}$ is the vertex set,
  $E=\{e_1,e_2,\ldots,e_m\}$ is the edge set, $e_p\subset V$ for $p=1,\ldots,m$.
  If $|e_p|=k \geq 2$ for $p=1,\ldots,m$ and $e_p \neq e_q$ in case of $p \neq q$,
  then $G$ is called a uniform hypergraph or a $k$-graph.
  If $k=2$, $G$ is an ordinary graph.

  The $k$-graph $G=(V,E)$ is called odd-bipartite if $k$ is even and there exists
  a proper subset $U$ of $V$ such that $|e_p\cap U|$ is odd for $p=1,\ldots,m$.
\end{Definition}

Let $G=(V,E)$ be a $k$-graph. For each $i\in V$, its degree $d(i)$ is defined as
\begin{equation*}
    d(i) = \left|\{e_p : i \in e_p \in E\}\right|.
\end{equation*}
We assume that every vertex has at least one edge. Thus, $d(i)>0$ for all $i$.
Furthermore, we define $\Delta$ as the maximum degree of $G$, i.e.,
$\Delta = \max_{1\leq i\leq n} d(i).$

\begin{figure}[!tbh]
  \centering
  \includegraphics[width=.4\textwidth]{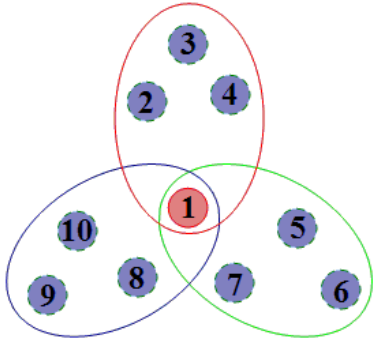}\\
  \caption{A $4$-uniform hypergraph: sunflower.}\label{HyperGraph-1}
\end{figure}

The first hypergraph is illustrated in Figure \ref{HyperGraph-1}.
There are ten vertices $V=\{1,2,\ldots,10\}$ and three edges
$E = \{e_1=\{1,2,3,4\}, e_2=\{1,5,6,7\}, e_3=\{1,8,9,10\}\}$.
Hence, it is a $4$-graph, and its degrees are
$d(1)=3$ and $d(i)=1$ for $i=2,\ldots,10$.
So we have $\Delta=3$. Moreover, this hypergraph is odd-bipartite
since we could take $U=\{1\}\subset V$.

\begin{Definition}[Adjacency tensor \cite{CoD-12}]
  Let $G=(V,E)$ be a $k$-graph with $n$ vertices.
  The adjacency tensor $\Adj=(a_{i_1 \cdots i_k})$ of $G$ is
  a $k$th order $n$-dimensional symmetric tensor, whose elements are
  \begin{equation*}
    a_{i_1 \cdots i_k}=\left\{\begin{aligned}
      &\frac{1}{(k-1)!} && \quad\text{ if }\{i_1,\ldots,i_k\}\in E, \\
      &0                && \quad\text{ otherwise. }
    \end{aligned}\right.
  \end{equation*}
\end{Definition}

\begin{Definition}[Laplacian tensor and signless Laplacian tensor \cite{Qi-14}]
  Let $G$ be a $k$-graph with $n$ vertices.
  We denote its degree tensor $\Dag$ as a $k$th order $n$-dimensional diagonal tensor
  whose $i$th diagonal element is $d(i)$.
  Then, the Laplacian tensor $\Lap$ and the signless Laplacian tensor $\sLp$ of $G$
  is defined respectively as
  \begin{equation*}
    \Lap = \Dag - \Adj \qquad\text{ and }\qquad \sLp = \Dag + \Adj.
  \end{equation*}
\end{Definition}

Obviously, the adjacency tensor $\Adj$ and the signless Laplacian tensor $\sLp$ of
a hypergraph $G$ are nonnegative. Moreover,
they are weakly irreducible if and only if $G$ is connected \cite{PeZ-14}.
Hence, we could apply the Ng-Qi-Zhou algorithms \cite{NQZ-09, CPZ-11, ZQW-13b}
for computing their largest H-eigenvalues and associated H-eigenvectors.
On the other hand, the Laplacian tensor $\Lap$ of a uniform hypergraph $G$
is a $M$-tensor \cite{ZQZ-14,DQW-13}.
Qi \cite[Theorem 3.2]{Qi-14} proved that zero is the smallest H-eigenvalue of $\Lap$.
However, the following problems are still open.
\begin{itemize}
  \item How to compute the largest H-eigenvalue of $\Lap$?
  \item How to calculate the smallest H-eigenvalues of $\sLp$ and $\Adj$?
  \item How to obtain extreme Z-eigenvalues of $\Adj$, $\Lap$, and $\sLp$?
\end{itemize}

Many theorems in spectral hypergraph theory are proved to
address H- and Z-eigenvalues of $\Adj$, $\Lap$, and $\sLp$
when the involved hypergraph has well geometric structures.
For convenience, we denote the largest H-eigenvalue and the smallest H-eigenvalue of
a tensor $\Ten$ related to a hypergraph $G$ as $\lambda_{\max}^{H}(\Ten(G))$ and
$\lambda_{\min}^{H}(\Ten(G))$ respectively.
We also define similar notations for Z-eigenvalues of that tensor.

\begin{Theorem}\label{Th:Spectrum}
  Let $G$ be a connected $k$-graph. Then the following assertions are equivalent.
  \begin{description}
    \item[](i) $k$ is even and $G$ is odd-bipartite.
    \item[](ii) $\lambda_{\max}^{H}(\Lap(G)) = \lambda_{\max}^{H}(\sLp(G))$
      (from Theorem 5.9 of \cite{HQX-15}).
    \item[](iii) $\mathrm{Hspec}(\Lap(G)) = \mathrm{Hspec}(\sLp(G))$
      (from Theorem 2.2 of \cite{SSW-15}).
    \item[](iv) $\mathrm{Hspec}(\Adj(G)) = -\mathrm{Hspec}(\Adj(G))$
      (from Theorem 2.3 of \cite{SSW-15}).
    \item[](v) $\mathrm{Zspec}(\Lap(G)) = \mathrm{Zspec}(\sLp(G))$
      (from Theorem 8 of \cite{BFZ-15}).
  \end{description}
\end{Theorem}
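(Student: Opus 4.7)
The plan is to organise the five conditions as a star with (i) at the centre, proving each of (ii)--(v) equivalent to (i) separately. A single algebraic device, the diagonal sign operator induced by the odd-bipartite partition, dispatches all four forward directions simultaneously; the reverse directions are (as advertised in the statement) the content of the cited papers.

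\emph{Common setup for the forward directions.} Assuming (i), fix a witness set $U\subset V$ and define the diagonal matrix $D=\mathrm{diag}(\varepsilon_1,\ldots,\varepsilon_n)$ with $\varepsilon_i=-1$ for $i\in U$ and $\varepsilon_i=+1$ otherwise. Because $|e\cap U|$ is odd for every edge $e$, $\prod_{j\in e}\varepsilon_j=-1$; and because $k$ is even, $D^{k-1}=D$ and $D^2=I$. A short direct calculation then yields
$\Adj(D\x)^{k-1}=-D\,\Adj\x^{k-1}$, and hence $\Lap(D\x)^{k-1}=D\,\sLp\x^{k-1}$ and $\sLp(D\x)^{k-1}=D\,\Lap\x^{k-1}$, together with $(D\x)^{[k-1]}=D\x^{[k-1]}$ and $\|D\x\|_2=\|\x\|_2$.

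\emph{Forward implications.} Using these identities, $\x\mapsto D\x$ is an eigenvalue-preserving bijection between H-eigenpairs of $\sLp(G)$ and H-eigenpairs of $\Lap(G)$, which at once yields (iii) and hence (ii). Because the map also preserves the unit sphere, it is likewise a bijection on Z-eigenpairs, proving (v). For the adjacency tensor the same bookkeeping sends an H-eigenpair $(\lambda,\x)$ of $\Adj(G)$ to the H-eigenpair $(-\lambda,D\x)$, establishing (iv).

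\emph{Reverse implications and main obstacle.} I would invoke the cited theorems verbatim for the four converses: Theorem 5.9 of \cite{HQX-15} for (ii)$\Rightarrow$(i), Theorems 2.2 and 2.3 of \cite{SSW-15} for (iii)$\Rightarrow$(i) and (iv)$\Rightarrow$(i), and Theorem 8 of \cite{BFZ-15} for (v)$\Rightarrow$(i). Each of those proofs starts from a Perron H-eigenvector of $\Adj(G)$ or $\sLp(G)$---guaranteed to exist and be strictly positive because connectedness of $G$ makes those nonnegative tensors weakly irreducible in the sense of \cite{PeZ-14}---and uses the hypothesised spectral equality to exhibit an eigenvector with a $\pm 1$ sign pattern, from which $U$ is read off. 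The hardest step is (ii)$\Rightarrow$(i): one is handed only the equality of a single scalar and must recover a global combinatorial symmetry of $G$, whereas the remaining reverse directions supply whole-spectrum information and the forward directions are essentially bookkeeping on top of the single sign-operator calculation.
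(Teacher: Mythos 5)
Your proposal is correct and is consistent with the paper, which offers no proof of its own beyond the citations embedded in the statement (Theorem 5.9 of \cite{HQX-15}, Theorems 2.2--2.3 of \cite{SSW-15}, Theorem 8 of \cite{BFZ-15}); your sign-operator computation $\Adj(D\x)^{k-1}=-D\,\Adj\x^{k-1}$, $\Lap(D\x)^{k-1}=D\,\sLp\x^{k-1}$ with $D^{k-1}=D$ for even $k$ is the standard argument those references use for the easy directions, and it checks out. Delegating the converses to the cited theorems is exactly what the paper does.
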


%
%

Khan and Fan \cite{KhF-15} studied a sort of non-odd-bipartite hypergraph
and gave the following result.

\begin{Theorem}\label{PowerG}(Corollary 3.6 of \cite{KhF-15})
  Let $G$ be a simple graph. For any positive integer $k$,
  we blow up each vertex of $G$ into a set that includes $k$ vertices
  and get a $2k$-graph $G^{2k,k}$.
  Then, $G^{2k,k}$ is not odd-bipartite if and only if $G$ is non-bipartite.
  Furthermore,
  \begin{equation*}
    \lambda_{\min}(\Adj(G)) = \lambda_{\min}^{H}(\Adj(G^{2k,k}))
    \quad\text{ and }\quad
    \lambda_{\min}(\sLp(G)) = \lambda_{\min}^{H}(\sLp(G^{2k,k})).
  \end{equation*}
\end{Theorem}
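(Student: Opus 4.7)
The plan is to tackle the odd-bipartite characterization and the two eigenvalue equalities separately, passing from the hypergraph $G^{2k,k}$ down to the graph $G$ via a ``blob'' substitution. For odd-bipartiteness, I would argue by parity: writing $a_i := |U \cap V_i|$ for any candidate witness $U \subseteq V(G^{2k,k})$, each hyperedge $V_i \cup V_j$ satisfies $|U \cap (V_i \cup V_j)| = a_i + a_j$, which is odd for every $\{i,j\} \in E(G)$ exactly when the parities of the $a_i$'s provide a proper 2-coloring of $G$. Thus $G$ bipartite (with parts $A, B$) implies that $U := \{v_i : i \in A\}$, one chosen vertex per blob on the $A$-side, witnesses odd-bipartiteness of $G^{2k,k}$; conversely, any valid $U$ forces $G$ to be bipartite via the parity 2-coloring.

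For the spectral equalities I would first derive the polynomial identities
\[
\Adj(G^{2k,k})\mathbf{x}^{2k} = k\,\mathbf{X}^\T A(G)\mathbf{X}, \qquad \sLp(G^{2k,k})\mathbf{x}^{2k} = \sum_{i=1}^n d(i)\sum_{v\in V_i} x_v^{2k} + k\,\mathbf{X}^\T A(G)\mathbf{X},
\]
where $X_i := \prod_{v\in V_i} x_v$, by grouping multi-indices of $G^{2k,k}$ according to the underlying $G$-edge and using $\mathbf{X}^\T A(G)\mathbf{X} = 2\sum_{\{i,j\}\in E(G)} X_i X_j$. Since $2k$ is even, the extreme H-eigenvalue admits the variational form $\lambda_{\min}^H(\Ten) = \min_{\|\mathbf{x}\|_{2k}=1}\Ten\mathbf{x}^{2k}$, and blob-wise AM-GM gives $\sum_{v\in V_i} x_v^{2k} \geq k X_i^2$, so $\|\mathbf{X}\|_2^2 \leq 1/k$ on the unit $\ell^{2k}$-sphere, with equality iff $|x_v|$ is constant on each $V_i$. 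This reduces the hypergraph optimization to a constrained quadratic problem in $\mathbf{X}$. For adjacency, since $\lambda_{\min}(A(G)) < 0$ whenever $E(G) \neq \emptyset$, the minimum of $k\mathbf{X}^\T A(G)\mathbf{X}$ over $\{\|\mathbf{X}\|_2^2 \leq 1/k\}$ is attained on the boundary with value $\lambda_{\min}(A(G))$, realized by $|x_v| = k^{-1/(2k)}$ with signs chosen so that $X_i$ is proportional to a $\lambda_{\min}$-eigenvector of $A(G)$ (one free sign per blob suffices since $k \geq 1$).

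The signless-Laplacian direction is the main obstacle, since the naive bound $\sLp(G^{2k,k})\mathbf{x}^{2k} \geq k\mathbf{X}^\T \sLp(G)\mathbf{X}$ is too weak when $\|\mathbf{X}\|_2$ is small. Rewriting the identity as
\[
\sLp(G^{2k,k})\mathbf{x}^{2k} = k\,\mathbf{X}^\T \sLp(G)\mathbf{X} + \sum_i d(i)\Bigl(\sum_{v\in V_i} x_v^{2k} - k X_i^2\Bigr)
\]
and using $\sum_i\bigl(\sum_{v\in V_i} x_v^{2k} - kX_i^2\bigr) = 1 - k\|\mathbf{X}\|_2^2$ together with $d(i) \geq d_{\min}$, I would obtain the affine lower bound $\sLp(G^{2k,k})\mathbf{x}^{2k} \geq k\lambda_{\min}(\sLp(G))\|\mathbf{X}\|_2^2 + d_{\min}(1 - k\|\mathbf{X}\|_2^2)$. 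The crucial nonobvious ingredient is the matrix inequality $\lambda_{\min}(\sLp(G)) \leq d_{\min}$, obtained from the Rayleigh quotient at a min-degree standard basis vector, which forces this affine function of $\|\mathbf{X}\|_2^2$ to be minimized at the boundary $\|\mathbf{X}\|_2^2 = 1/k$ with value exactly $\lambda_{\min}(\sLp(G))$. The matching upper bound is achieved by the same constant-on-blob construction with signs realizing a $\lambda_{\min}$-eigenvector of $\sLp(G)$, which simultaneously attains equality in blob AM-GM and optimizes the underlying matrix Rayleigh quotient.
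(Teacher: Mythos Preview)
The paper does not prove this result at all: Theorem~\ref{PowerG} is quoted verbatim as Corollary~3.6 of Khan and Fan~\cite{KhF-15}, with no argument given. So there is no ``paper's own proof'' to compare against; your proposal is a genuine self-contained proof where the paper merely cites one.

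Your argument is correct in substance. The parity reduction for odd-bipartiteness is exactly the right mechanism, and the variational computation via the blob map $\mathbf{x}\mapsto\mathbf{X}$ together with blob-wise AM--GM is the natural route. The signless-Laplacian step, which you flag as the main obstacle, is handled cleanly: the rewriting isolating the nonnegative defect $\sum_{v\in V_i}x_v^{2k}-kX_i^2$, the lower bound via $d(i)\ge d_{\min}$, and the Rayleigh observation $\lambda_{\min}(\sLp(G))\le d_{\min}$ combine to pin the affine minimum at $k\|\mathbf{X}\|_2^2=1$. This is a nice argument.

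One small inaccuracy to fix: in the adjacency case you write that the minimizer is ``realized by $|x_v|=k^{-1/(2k)}$ with signs chosen so that $X_i$ is proportional to a $\lambda_{\min}$-eigenvector''. With $|x_v|$ globally constant you would get $|X_i|=k^{-1/2}$ for \emph{every} $i$, so $\mathbf{X}$ could only be a $\pm1$ vector rescaled, which is not generally an eigenvector of $A(G)$. The correct construction lets $|x_v|$ depend on the blob: for a unit eigenvector $\mathbf{u}$ set $|x_v|=|u_i|^{1/k}k^{-1/(2k)}$ for $v\in V_i$, and adjust one sign per blob so that $X_i=u_i/\sqrt{k}$. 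Then $\sum_v x_v^{2k}=\sum_i k\cdot|u_i|^2/k=1$ and $k\,\mathbf{X}^\T A(G)\mathbf{X}=\lambda_{\min}(A(G))$. The same construction already appears implicitly in your signless-Laplacian matching step, so this is only a slip in the explicit formula, not in the method.
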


%

\section{Computational methods on sparse tensors
  arising from a hypergraph}\label{CompIssue}

The adjacency tensor $\Adj$, the Laplacian tensor $\Lap$,
and the signless Laplacian tensor $\sLp$ of a uniform hypergraph are usually sparse.
For instance, $\Adj$, $\Lap$ and $\sLp$ of
the $4$-uniform sunflower illustrated in Figure \ref{HyperGraph-1}
only contain $0.72\%$, $0.76\%$, and $0.76\%$ nonzero elements respectively.
Hence, it is an important issue to explore the sparsity in
tensors $\Adj$, $\Lap$, and $\sLp$ involved in a hypergraph $G$.
Now, we introduce a fast numerical approach based on MATLAB.

\smallskip\indent
\textbf{How to store a uniform hypergraph?}
Let $G=(V,E)$ be a $k$-graph with $|V|=n$ vertices and $|E|=m$ edges.
We store $G$ as an $m$-by-$k$ matrix $G_r$
whose rows are composed of the indices of vertices from corresponding edges of $G$.
Here, the ordering of elements in each row of $G_r$ is unimportant in the sense that
we could permute them.


For instance, we consider the $4$-uniform sunflower shown in Figure \ref{HyperGraph-1}.
The edge-vertex incidence matrix of this sunflower is a $3$-by-$10$ sparse matrix
\begin{equation*}
  \begin{array}{@{}r@{}c@{}c@{}c@{}c@{}c@{}c@{}c@{}c@{}c@{}c@{}l@{}}
    \left.\begin{array}{c} \\ \\ \\ \end{array}\right[
      & \begin{array}{c} 1 \\ 1 \\ 1 \end{array}
        & \begin{array}{c} 1 \\ 0 \\ 0 \end{array}
          & \begin{array}{c} 1 \\ 0 \\ 0 \end{array}
            & \begin{array}{c} 1 \\ 0 \\ 0 \end{array}
              & \begin{array}{c} 0 \\ 1 \\ 0 \end{array}
                & \begin{array}{c} 0 \\ 1 \\ 0 \end{array}
                  & \begin{array}{c} 0 \\ 1 \\ 0 \end{array}
                    & \begin{array}{c} 0 \\ 0 \\ 1 \end{array}
                      & \begin{array}{c} 0 \\ 0 \\ 1 \end{array}
                        & \begin{array}{c} 0 \\ 0 \\ 1 \end{array}
                          & \left]\begin{array}{c} \\ \\ \\ \end{array}\right. \\
    & \uparrow & \uparrow & \uparrow & \uparrow & \uparrow & \uparrow & \uparrow & \uparrow & \uparrow & \uparrow & \\
    & \text{\footnotesize 1} & \text{\footnotesize 2} & \text{\footnotesize 3} & \text{\footnotesize 4}
      & \text{\footnotesize 5} & \text{\footnotesize 6} & \text{\footnotesize 7} & \text{\footnotesize 8}
        & \text{\footnotesize 9} & \text{\footnotesize 10} & ~~\gets\text{\footnotesize (the indices of vertices)} \\
  \end{array}
\end{equation*}
From the viewpoint of scientific computing,
we prefer to store the incidence matrix of the sunflower in a compact form
\begin{equation*}
    G_r = \left[
       \begin{array}{cccccccccc}
         1 & 2 & 3 &  4 \\
         1 & 5 & 6 &  7 \\
         1 & 8 & 9 & 10 \\
       \end{array}
     \right] \in \REAL^{3 \times 4}.
\end{equation*}
Obviously, the number of columns of the matrix $G_r$ is less than the original incidence matrix,
since usually $k \ll n$.
We can benefit from this compact matrix in the process of computing.
In MATLAB, this matrix $G_r$ is written in Line 2 of Figure \ref{Exp-Matlab}.

\begin{figure}
\lstset{numbers=left, numberstyle=\tiny, keywordstyle=\color{blue!70}, commentstyle=\color{red!50!green!50!blue!50}, frame=shadowbox, rulesepcolor=\color{red!20!green!20!blue!20},escapeinside=``, xleftmargin=2em,xrightmargin=2em, aboveskip=1em}
\begin{lstlisting}[language=Matlab]
  % Store a 4-uniform sunflower
  Gr = [1,2,3,4; 1,5,6,7; 1,8,9,10];

  % Calculate the degree vector
  [m,k] = size(Gr);  n = max(Gr(:));
  Msp = sparse(Gr(:),(1:m*k)',ones(m*k,1),n,m*k);
  degree = full(sum(Msp,2));

  % Suppose that x is an n-dimensional vector.
  % Compute the vector Dx^(k-1)
  Dx_ = degree .* (x.^(k-1));

  % Compute the scalar Dx^k
  Dxk  = dot(degree, x.^k); % direct approach, or
  Dxk2 = dot(x, Dx_);       % if Dx_ is available.

  % Compute the vector Ax^(k-1)
  Xmat = reshape(x(Gr(:)),[m,k]);
  Ax_  = zeros(n,1);
  for j=1:k
      Mj  = sparse(Gr(:,j),(1:m)',ones(m,1),n,m);
      yj  = prod(Xmat(:,[1:j-1,j+1:k]),2);
      Ax_ = Ax_ + Mj*yj;
  end

  % Compute the scalar Ax^k
  Axk  = k*sum(prod(Xmat,2)); % direct approach, or
  Axk2 = dot(x, Ax_);         % if Ax_ is available.

  % Calculate Lx^(k-1) and Qx^(k-1)
  Lx_ = Dx_ - Ax_;
  Qx_ = Dx_ + Ax_;

  % Calculate Lx^k and Qx^k
  Lxk = Dxk - Axk;
  Qxk = Dxk + Axk;
\end{lstlisting}
\caption{Matlab codes for the sunflower illustrated in
  Figure \ref{HyperGraph-1}.}\label{Exp-Matlab}
\end{figure}

\smallskip\indent
\textbf{How to compute products $\Ten\x^k$ and $\Ten\x^{k-1}$
  when $\Ten=\Adj,\Lap,\text{ and }\sLp$?}
Suppose that the matrix $G_r$ representing a uniform hypergraph
and a vector $\x\in\REAL^n$ are available.
Since $\Lap = \Dag-\Adj$ and $\sLp = \Dag+\Adj$,
it is sufficient to study the degree tensor $\Dag$ and the adjacency tensor $\Adj$.

We first consider the degree tensor $\Dag$. It is a diagonal tensor and
its $i$th diagonal element is the degree $d(i)$ of a vertex $i\in V$.
Once the hypergraph $G$ is given, the degree vector $\dvec \equiv [d(i)]\in\REAL^n$ is fixed.
So we could save $\dvec$ from the start.
Let $\delta(\cdot,\cdot)$ be the Kronecker delta, i.e.,
$\delta(i,j)=1$ if $i=j$ and $\delta(i,j)=0$ if $i\neq j$.
Using this notation, we could rewrite the degree as
\begin{equation*}
    d(i) = \sum_{\ell=1}^m\sum_{j=1}^k\delta(i,(G_r)_{\ell j}),
    \qquad \text{ for }~ i=1,\ldots,n.
\end{equation*}
To calculate the degree vector $d$ efficiently,
we construct an $n$-by-$mk$ sparse matrix $M_{sp}=[\delta(i,(G_r)_{\ell j})]$.
By summarizing each row of $M_{sp}$, we obtain the degree vector $\dvec$.
For any vector $\x\in\REAL^n$, the computation of
\begin{equation*}
    \Dag\x^{k-1}= \dvec \ast (\x^{[k-1]}) \qquad\text{and}\qquad
    \Dag\x^k    = \dvec^\T(\x^{[k]})
\end{equation*}
are straightforward, where ``$\ast$'' denotes the component-wise Hadamard product.
In Figure \ref{Exp-Matlab}, we show these codes in Lines 4-15.

Second, we focus on the adjacency tensor $\Adj$.
We construct a matrix $X_{mat}=[x_{(G_r)_{\ell j}}]$ which has the same size as $G_r$.
Assume that the $(\ell,j)$-th element of $G_r$ is $i$. Then,
the $(\ell,j)$-th element of $X_{mat}$ is defined as $x_i$.
From this matrix, we rewrite the product $\Adj\x^k$ as
\begin{equation*}
    \Adj\x^k = k \sum_{\ell=1}^m \prod_{j=1}^k (X_{mat})_{\ell j}.
\end{equation*}
See Lines 18 and 27 of Figure \ref{Exp-Matlab}.
To compute the vector $\Adj\x^{k-1}$, we use the following representation
\begin{equation*}
    (\Adj\x^{k-1})_i = \sum_{j=1}^k \sum_{\ell=1}^m
      \left(\delta(i,(G_r)_{\ell j})\prod_{\substack{s=1 \\ s\neq j}}^k (X_{mat})_{\ell s}\right),
    \qquad \text{ for }~ i=1,\ldots,n.
\end{equation*}
For each $j=1,\ldots,k$, we construct
a sparse matrix $M_j=[\delta(i,(G_r)_{\ell j})]\in\REAL^{n \times m}$
and a column vector $\y_j=[\prod_{s\neq j} (X_{mat})_{\ell s}]\in\REAL^m$
respectively. Then, the vector $$\Adj\x^{k-1} = \sum_{j=1}^k M_j\y_j$$
could be computed by using a simple loop.
See Lines 17-24 of Figure \ref{Exp-Matlab}.

The computational costs for computing products of tensors $\Adj$, $\Lap$, and $\sLp$
with any vector $\x$ are about $mk^2$, $mk^2+nk$, and
$mk^2+nk$ multiplications, respectively.
Since $mk^2 < mk^2+nk \leq 2mk^2$, the computational cost of the product of
a vector and a large scale sparse tensor related with a hypergraph is cheap.
Additionally, the codes listed in Figure \ref{Exp-Matlab}
could easily be extended to parallel computing.


\section{The CEST algorithm}
The design of the novel CEST algorithm is based on a unified formula for
the H- and Z-eigenvalue of a symmetric tensor \cite{CPZ-09,DiW-15}.
Let $\Iid\in\REAL^{[k,n]}$ be an identity tensor
whose diagonal elements are all one and off-diagonal elements are zero.
Hence, $\Iid\x^{k-1}=\x^{[k-1]}$.
If $k$ is even, we define $\Eid\in\REAL^{[k,n]}$ such that
$ \Eid\x^{k-1} = (\x^\T\x)^{\frac{k}{2}-1}\x. $
Using tensors $\Iid$ and $\Eid$, we could rewrite
systems \eqref{H-eig-def} and \eqref{Z-eig-def} as
\begin{equation}\label{geig}
    \Ten\x^{k-1}=\lambda\Btn\x^{k-1},
\end{equation}
where $\Btn=\Iid$ and $\Btn=\Eid$ respectively.
In the remainder of this paper, we call a real $\lambda$ and
a nonzero vector $\x\in\REAL^n$ an eigenvalue and its associated eigenvector respectively
if they satisfies \eqref{geig}.
Now, we devote to compute such $\lambda$ and $\x$
for large scale sparse tensors.

Let $k$ be even. We consider the spherical optimization problem
\begin{equation}\label{Sph-Opt}
    \min~f(\x) = \frac{\Ten\x^k}{\Btn\x^k} \qquad
    \st~~\x\in\SPHERE,
\end{equation}
where the symmetric tensor $\Ten$ arises from a $k$-uniform hypergraph,
so $\Ten$ is sparse and may be large scale.
$\Btn$ is a symmetric positive definite tensor with a simple structure
such as $\Iid$ and $\Eid$.
Without loss of generality, we restrict $\x$ on
a compact unit sphere $\SPHERE \equiv \{\x\in\REAL^n : \x^\T\x=1\}$
because $f(\x)$ is zero-order homogeneous.

The gradient of $f(\x)$ \cite{CQW-15} is
\begin{equation}\label{Opt-grad}
    \g(\x) = \frac{k}{\Btn\x^k}\left(\Ten\x^{k-1}-\frac{\Ten\x^k}{\Btn\x^k}\Btn\x^{k-1}\right).
\end{equation}
Obviously, for all $\x\in\SPHERE$, we have
\begin{equation}\label{xkgk}
    \x^\T\g(\x) = \frac{k}{\Btn\x^k}\left(\x^\T\Ten\x^{k-1}-\frac{\Ten\x^k}{\Btn\x^k}\x^\T\Btn\x^{k-1}\right) = 0.
\end{equation}
This equality implies that the vector $\x\in\SPHERE$ is perpendicular to
its (negative) gradient direction.
The following theorem reveals the relationship between the spherical optimization
\eqref{Sph-Opt} and the eigenvalue problem \eqref{geig}.

\begin{Theorem}\label{Th: opt-cond}
  Suppose that the order $k$ is even and the symmetric tensor $\Btn$ is positive definite.
  Let $\x_*\in\SPHERE$.
  Then, $\x_*$ is a first-order stationary point, i.e., $\g(\x_*)=0$,
  if and only if $\x_*$ is an eigenvector corresponding to a certain eigenvalue.
  In fact, the eigenvalue is $f(\x_*)$.
\end{Theorem}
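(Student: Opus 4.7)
The plan is to read off the equivalence directly from the closed-form gradient \eqref{Opt-grad}. The crucial preliminary observation is that positive definiteness of the symmetric tensor $\Btn$, together with the evenness of $k$, guarantees $\Btn\x^k > 0$ for every nonzero $\x$. Consequently the scalar prefactor $k/(\Btn\x_*^k)$ appearing in \eqref{Opt-grad} is strictly positive, so $\g(\x_*) = 0$ is equivalent to the vanishing of the parenthesized vector expression alone.

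For the ``only if'' direction, I would assume $\g(\x_*) = 0$, divide out the nonzero prefactor in \eqref{Opt-grad}, and use the definition $f(\x_*) = \Ten\x_*^k/\Btn\x_*^k$ to obtain
$$\Ten\x_*^{k-1} = f(\x_*)\,\Btn\x_*^{k-1},$$
which is precisely the unified eigen-equation \eqref{geig} with eigenvalue $\lambda = f(\x_*)$. Hence $\x_*$ is an eigenvector and the corresponding eigenvalue is identified as $f(\x_*)$.

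For the converse, suppose there exists $\lambda\in\REAL$ with $\Ten\x_*^{k-1} = \lambda\Btn\x_*^{k-1}$. Contracting both sides with $\x_*^{\T}$ yields $\Ten\x_*^k = \lambda\,\Btn\x_*^k$, and since $\Btn\x_*^k > 0$ this forces $\lambda = \Ten\x_*^k/\Btn\x_*^k = f(\x_*)$. Substituting $\lambda = f(\x_*)$ back into \eqref{Opt-grad} makes the bracketed term vanish, giving $\g(\x_*) = 0$.

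There is no genuine obstacle here: the whole argument is a one-line algebraic manipulation of \eqref{Opt-grad}, and the only thing one must take care of is that $\Btn\x_*^k$ does not vanish, which is ensured by the hypotheses on $\Btn$ and the parity of $k$. The role of the spherical constraint $\x_*\in\SPHERE$ is not needed for the algebra itself; it merely fixes a concrete nonzero representative of the ray on which the zero-homogeneous function $f$ is constant, so that $\x_*$ qualifies as an eigenvector in the sense of \eqref{geig}.
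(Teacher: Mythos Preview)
Your argument is correct and follows the same route as the paper's own proof: both use the positivity of $\Btn\x_*^k$ to strip the scalar prefactor from \eqref{Opt-grad}, deduce the eigen-equation with $\lambda=f(\x_*)$ in the forward direction, and in the reverse direction contract $\Ten\x_*^{k-1}=\lambda\Btn\x_*^{k-1}$ against $\x_*$ to identify $\lambda=f(\x_*)$ before substituting back. Your additional remarks on the role of the evenness of $k$ and of the spherical constraint are accurate elaborations but do not alter the argument.
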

\begin{proof}
  Since $\Btn$ is positive definite, $\Btn\x^k>0$ for all $\x\in\SPHERE$.
  Hence, by \eqref{Opt-grad}, if $\x_*\in\SPHERE$ satisfies $\g(\x_*)=0$,
  $f(\x_*)$ is an eigenvalue and $\x_*$ is its associated eigenvector.

  On the other hand, suppose that $\x_*\in\SPHERE$ is an eigenvector
  corresponding to an eigenvalue $\lambda_*$, i.e.,
  \begin{equation*}
    \Ten\x_*^{k-1} = \lambda_*\Btn\x_*^{k-1}.
  \end{equation*}
  By taking inner products on both sides with $\x_*$,
  we get $\Ten\x_*^k = \lambda_*\Btn\x_*^k$.
  Because $\Btn\x_*^k>0$, it yields that $\lambda_*=\frac{\Ten\x_*^k}{\Btn\x_*^k}=f(\x_*)$.
  Hence, by \eqref{Opt-grad}, we obtain $\g(\x_*)=0$.
\end{proof}

Next, we focus on numerical approaches for computing a first-order stationary
point of the spherical optimization \eqref{Sph-Opt}.
First, we apply the limited memory BFGS (L-BFGS) approach for generating a search direction.
Then, a curvilinear search technique is explored to preserve iterates
in a spherical constraint.

\subsection{L-BFGS produces a search direction}

The limited memory BFGS method is powerful for large scale nonlinear unconstrained optimization.
In the current iteration $c$,
it constructs an implicit matrix $H_c$ to approximate the inverse of a Hessian of $f(\x)$.
At the beginning, we introduce the basic BFGS update.


BFGS is a quasi-Newton method which updates
the approximation of the inverse of a Hessian iteratively.
Let $H_c$ be the current approximation,
\begin{equation}\label{def-sy}
    \y_c = \g(\x_{c+1})-\g(\x_c), \qquad
    \s_c = \x_{c+1}-\x_c, \quad\text{and}\quad
     V_c = I-\rho_c\y_c\s_c^\T,
\end{equation}
where $I$ is an identity matrix,
\begin{equation}\label{def-rho}
    \rho_c = \left\{\begin{aligned}
      &\frac{1}{\y_c^\T\s_c} &&~ \text{ if }\y_c^\T\s_c \geq \kappa_{\epsilon}, \\
      &0                     &&~ \text{ otherwise, }
    \end{aligned}\right.
\end{equation}
and $\kappa_{\epsilon}\in(0,1)$ is a small positive constant.
We generate the new approximation $H_c^+$ by the BFGS formula \cite{NoW-06,SuY-06}
\begin{equation}\label{BFGS-formula}
    H_{c}^+ = V_c^\T H_c V_c + \rho_c\s_c\s_c^\T.
\end{equation}

For the purpose of solving large scale optimization problems,
Nocedal \cite{Noc-80} proposed the L-BFGS approach
which implements the BFGS update in an economic way.
Given any vector $\g\in\REAL^n$, the matrix-vector product $-H_c\g$ could be computed
using only $\mathcal{O}(n)$ multiplications.

In each iteration $c$, L-BFGS starts from a simple matrix
\begin{equation}\label{Hk-start}
    H_c^{(0)} = \gamma_c I,
\end{equation}
where $\gamma_c>0$ is usually determined by
the Barzilai-Borwein method \cite{LiN-89,BaB-88}.
Then, we use BFGS formula \eqref{BFGS-formula} to update $H_c^{(\ell)}$ recursively
\begin{equation}\label{Hk-rec}
    H_c^{(L-\ell+1)} = V_{c-\ell}^\T H_c^{(L-\ell)} V_{c-\ell}
      + \rho_{c-\ell}\s_{c-\ell}\s_{c-\ell}^\T, \qquad
      \text{for }~ \ell = L,L-1,\ldots,1,
\end{equation}
and obtain
\begin{equation}\label{Hk-final}
    H_c=H_c^{(L)}.
\end{equation}
If $\ell\geq c$, we define $\rho_{c-\ell}=0$ and L-BFGS does nothing for that $\ell$.
In a practical implementation, L-BFGS enjoys a cheap two-loop recursion.
The computational cost is about $4Ln$ multiplications.

\renewcommand{\thealgorithm}{L-BFGS}
\begin{algorithm}[!tb]
\newcommand{\qk}{\vt{q}}
\newcommand{\rk}{\vt{p}}
\caption{The two-loop recursion for L-BFGS \cite{Noc-80,NoW-06,SuY-06}.}\label{L-BFGS 2-loop}
\begin{algorithmic}[1]
  \STATE $\qk \gets -\g(\x_c)$,
  \FOR{$i=c-1,c-2,\ldots,c-L$}
      \STATE $\alpha_i \gets \rho_i\s_i^\T\qk$,
      \STATE $\qk \gets \qk-\alpha_i\y_i$,
  \ENDFOR
  \STATE $\rk \gets \gamma_c \qk$,
  \FOR{$i=c-L,c-L+1,\ldots,c-1$}
      \STATE $\beta \gets \rho_i\y_i^\T\rk$,
      \STATE $\rk \gets \rk + \s_i(\alpha_i-\beta)$,
  \ENDFOR
  \STATE Stop with result $\rk = -H_c\g(\x_c)$.
\end{algorithmic}
\end{algorithm}

For the parameter $\gamma_c$, we have three candidates.
The first two are suggested by Barzilai and Borwein \cite{BaB-88}:
\begin{equation}\label{BB-steps}
    \gamma_c^{\mathrm{BB1}} = \frac{\y_c^\T\s_c}{\|\y_c\|^2}
    \qquad\text{ and }\qquad
    \gamma_c^{\mathrm{BB2}} = \frac{\|\s_c\|^2}{\y_c^\T\s_c}.
\end{equation}
The third one are their geometric mean \cite{Dai-14}
\begin{equation}\label{Dai-steps}
    \gamma_c^{\mathrm{Dai}} = \frac{\|\s_c\|}{\|\y_c\|}.
\end{equation}
Furthermore, we set $\gamma_c=1$ if $\y_c^\T\s_c < \kappa_{\epsilon}$.

\subsection{Cayley transform preserves the spherical constraint}

Suppose that $\x_c\in\SPHERE$ is the current iterate,
$\p_c\in\REAL^n$ is a good search direction generated by
Algorithm \ref{L-BFGS 2-loop} and $\alpha$ is a damped factor.
First, we construct a skey-symmetric matrix
\begin{equation}\label{W-mat}
    W = \alpha(\x_c\p_c^\T - \p_c\x_c^\T) \in\REAL^{n\times n}.
\end{equation}
Obviously, $I+W$ is invertible.
Using the Cayley transform, we obtain an orthogonal matrix
\begin{equation}\label{Q-mat}
    Q = (I-W)(I+W)^{-1}.
\end{equation}
Hence, the new iterate $\x_{c+1}$ is still locating on the unit sphere $\SPHERE$
if we define
\begin{equation}\label{new-it}
    \x_{c+1} = Q\x_c.
\end{equation}
Indeed, matrices $W$ and $Q$ are not needed to be formed explicitly.
The new iterate $\x_{c+1}$ could be generated from $\x_c$ and $\p_c$ directly
with only about $4n$ multiplications.

\begin{Lemma}\label{Lem:xkiter}
  Suppose that the new iterate $\x_{c+1}$ is generated by \eqref{W-mat}, \eqref{Q-mat},
  and \eqref{new-it}. Then, we have
  \begin{equation}\label{iterate-update}
    \x_{c+1}(\alpha) = \frac{[(1-\alpha\x_c^\T\p_c)^2-\|\alpha\p_c\|^2]\x_c+2\alpha\p_c}{
      1+\|\alpha\p_c\|^2-(\alpha\x_c^\T\p_c)^2}
  \end{equation}
  and
  \begin{equation}\label{iterate-length}
    \|\x_{c+1}(\alpha)-\x_c\| =
      2\left(\frac{\|\alpha\p_c\|^2-(\alpha\x_c^\T\p_c)^2}{
            1+\|\alpha\p_c\|^2-(\alpha\x_c^\T\p_c)^2}\right)^{\frac{1}{2}}.
  \end{equation}
\end{Lemma}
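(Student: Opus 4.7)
The plan is to exploit the fact that the skew-symmetric matrix $W = \alpha(\x_c\p_c^\T - \p_c\x_c^\T)$ has rank at most two, so its entire action (and hence that of $(I+W)^{-1}$ and $Q$) is confined to the two-dimensional subspace $\mathrm{span}\{\x_c,\p_c\}$. Using $\x_c^\T\x_c=1$, I would first compute the two explicit actions
\begin{equation*}
    W\x_c = \alpha(\x_c^\T\p_c)\x_c - \alpha\p_c, \qquad
    W\p_c = \alpha\|\p_c\|^2\x_c - \alpha(\x_c^\T\p_c)\p_c,
\end{equation*}
which shows that $\mathrm{span}\{\x_c,\p_c\}$ is invariant under $W$, and therefore under $I\pm W$ and $(I+W)^{-1}$.

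Next, I would solve the linear system $(I+W)\y = \x_c$ via the ansatz $\y = a\x_c + b\p_c$. Plugging in and matching coefficients of $\x_c$ and $\p_c$ reduces the problem to a $2\times 2$ linear system whose determinant is exactly $D := 1+\|\alpha\p_c\|^2-(\alpha\x_c^\T\p_c)^2$. Solving gives
\begin{equation*}
    a = \frac{1-\alpha\x_c^\T\p_c}{D}, \qquad b = \frac{\alpha}{D}.
\end{equation*}
Then $\x_{c+1}(\alpha) = (I-W)\y = a\x_c + b\p_c - aW\x_c - bW\p_c$; substituting the formulas for $W\x_c,W\p_c$ and collecting coefficients of $\x_c$ and $\p_c$ produces $2\alpha/D$ in front of $\p_c$ and $[(1-\alpha\x_c^\T\p_c)^2-\|\alpha\p_c\|^2]/D$ in front of $\x_c$, which is exactly \eqref{iterate-update}.

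For \eqref{iterate-length}, the clean way is to subtract $\x_c$ in the compact form just obtained and rearrange the numerator so that it splits as $2\alpha(\p_c - (\x_c^\T\p_c)\x_c) - 2\alpha^2 E\,\x_c$, where $E := \|\p_c\|^2-(\x_c^\T\p_c)^2$. Since $\p_c - (\x_c^\T\p_c)\x_c$ is orthogonal to $\x_c$, the cross term vanishes in the squared norm, and a short computation using $\|\p_c-(\x_c^\T\p_c)\x_c\|^2 = E$ yields $\|\x_{c+1}(\alpha)-\x_c\|^2 = \frac{4\alpha^2 E(1+\alpha^2 E)}{D^2} = \frac{4\alpha^2 E}{D}$, which is \eqref{iterate-length}.

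I do not expect a serious obstacle here; the proof is essentially linear algebra on a two-dimensional invariant subspace. The only small point to keep straight is the bookkeeping of signs when simplifying the coefficient of $\x_c$ in $(I-W)\y$, and checking that $D>0$ so the formulas are well defined — which is immediate since $\|\p_c\|^2-(\x_c^\T\p_c)^2\geq 0$ by Cauchy--Schwarz.
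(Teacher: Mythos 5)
Your proof is correct, and it reaches the paper's key intermediate identity $(I+W)^{-1}\x_c = \big[(1-\alpha\x_c^\T\p_c)\x_c+\alpha\p_c\big]/D$ with $D=1+\|\alpha\p_c\|^2-(\alpha\x_c^\T\p_c)^2$ by a genuinely different route. The paper inverts $I+W$ via the Sherman--Morrison--Woodbury formula applied to the rank-two factorization $W=[\x_c,\ -\alpha\p_c][\alpha\p_c^\T;\ \x_c^\T]$, which requires inverting an explicit $2\times 2$ matrix inside the formula; you instead observe that $\mathrm{span}\{\x_c,\p_c\}$ is invariant under $W$ and solve $(I+W)\y=\x_c$ by the ansatz $\y=a\x_c+b\p_c$, arriving at the same $2\times 2$ system (with the same determinant $D$) more transparently. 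The two are essentially equivalent in content, but your version is more elementary and makes it clear why $D$ appears. For the length formula \eqref{iterate-length}, your argument is cleaner than the paper's: the paper expands $\|\x_{c+1}(\alpha)-\x_c\|^2$ by brute force over several lines, whereas your orthogonal splitting of the numerator as $2\alpha(\p_c-(\x_c^\T\p_c)\x_c)-2\alpha^2E\,\x_c$ with $E=\|\p_c\|^2-(\x_c^\T\p_c)^2$, combined with $D=1+\alpha^2E$ and $\|\p_c-(\x_c^\T\p_c)\x_c\|^2=E$, kills the cross term and gives $4\alpha^2E(1+\alpha^2E)/D^2=4\alpha^2E/D$ immediately. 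The only point worth stating explicitly is why the solution $\y$ must lie in $\mathrm{span}\{\x_c,\p_c\}$: since this subspace is invariant under the invertible map $I+W$ and is finite-dimensional, $I+W$ maps it onto itself, so the preimage of $\x_c$ is in it; alternatively, the degenerate case $\p_c\in\mathrm{span}\{\x_c\}$ (where $W=0$) is checked directly and is consistent with \eqref{iterate-update}.
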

\begin{proof}
  We employ the Sherman-Morrison-Woodbury formula: if $A$ is invertible,
  \begin{equation*}
    (A+UV^\T)^{-1} = A^{-1}-A^{-1}U(I+{V^\T}A^{-1}U)^{-1}{V^\T}A^{-1}.
  \end{equation*}
  It yields that
  \begin{eqnarray*}
    \lefteqn{ (I+W)^{-1}\x_c = \left(I+
      \left[\begin{array}{cc} \x_c & -\alpha\p_c \\      \end{array}\right]
      \left[\begin{array}{c}  \alpha\p_c^\T \\ \x_c^\T \\ \end{array}\right]
    \right)^{-1}\x_c }\\
      &=& \left(I-\left[\begin{array}{cc}\x_c & -\alpha\p_c \\\end{array}\right]
        \left(\left[\begin{array}{cc} 1 & 0 \\0 & 1 \\ \end{array}\right]+
        \left[\begin{array}{c}  \alpha\p_c^\T \\ \x_c^\T \\ \end{array}\right]
        \left[\begin{array}{cc} \x_c & -\alpha\p_c \\       \end{array}\right] \right)^{-1}
        \left[\begin{array}{c} \alpha\p_c^\T \\ \x_c^\T \\  \end{array}\right]\right)\x_c \\
      &=& \x_c-\left[\begin{array}{cc}\x_c & -\alpha\p_c \\ \end{array}\right]
        \left[\begin{array}{cc}
                1+\alpha\x_c^\T\p_c & -\|\alpha\p_c\|^2 \\
                1 & 1-\alpha\x_c^\T\p_c \\
              \end{array}\right]^{-1}
        \left[\begin{array}{c}\alpha\x_c^\T\p_c \\ 1 \\ \end{array}\right] \\
      &=& \x_c-\left[\begin{array}{cc}\x_c & -\alpha\p_c \\ \end{array}\right]
            \frac{1}{1+\|\alpha\p_c\|^2-(\alpha\x_c^\T\p_c)^2}
        \left[\begin{array}{c}
                \alpha\x_c^\T\p_c(1-\alpha\x_c^\T\p_c)+\|\alpha\p_c\|^2 \\ 1 \\
        \end{array}\right] \\
      &=& \frac{(1-\alpha\x_c^\T\p_c)\x_c+\alpha\p_c}{1+\|\alpha\p_c\|^2-(\alpha\x_c^\T\p_c)^2},
  \end{eqnarray*}
  where $1+\|\alpha\p_c\|^2-(\alpha\x_c^\T\p_c)^2 \geq 1$ since $\x_c\in\SPHERE$
  and $|\alpha\x_c^\T\p_c| \leq \|\alpha\p_c\|$.
  Then, the calculation of $\x_{c+1}$ is straightforward
  \begin{equation*}
    \x_{c+1} = (I-W)\frac{(1-\alpha\x_c^\T\p_c)\x_c+\alpha\p_c}{1+\|\alpha\p_c\|^2-(\alpha\x_c^\T\p_c)^2}
      = \frac{[(1-\alpha\x_c^\T\p_c)^2-\|\alpha\p_c\|^2]\x_c+2\alpha\p_c}{1+\|\alpha\p_c\|^2-(\alpha\x_c^\T\p_c)^2}.
  \end{equation*}
  Hence, the iterate formula \eqref{iterate-update} is valid.

  Then, by some calculations, we have
  \begin{eqnarray*}
    \lefteqn{ \|\x_{c+1}(\alpha)-\x_c\|^2 }\\
      &=& \left\|\frac{[2\alpha\x_c^\T\p_c(\alpha\x_c^\T\p_c-1)-2\|\alpha\p_c\|^2]\x_c+2\alpha\p_c}{
            1+\|\alpha\p_c\|^2-(\alpha\x_c^\T\p_c)^2}\right\|^2 \\
      &=& \frac{[2\alpha\x_c^\T\p_c(\alpha\x_c^\T\p_c-1)-2\|\alpha\p_c\|^2]
            [2\alpha\x_c^\T\p_c(\alpha\x_c^\T\p_c+1)-2\|\alpha\p_c\|^2]+ 4\|\alpha\p_c\|^2}{
            [1+\|\alpha\p_c\|^2-(\alpha\x_c^\T\p_c)^2]^2} \\
      &=& \frac{(2\alpha\x_c^\T\p_c)^2[(\alpha\x_c^\T\p_c)^2-1]
            -2\|\alpha\p_c\|^2(2\alpha\x_c^\T\p_c)^2+4\|\alpha\p_c\|^4+ 4\|\alpha\p_c\|^2}{
            [1+\|\alpha\p_c\|^2-(\alpha\x_c^\T\p_c)^2]^2} \\
      &=& \frac{(2\alpha\x_c^\T\p_c)^2[(\alpha\x_c^\T\p_c)^2-1-\|\alpha\p_c\|^2]
            +4\|\alpha\p_c\|^2[-(\alpha\x_c^\T\p_c)^2+\|\alpha\p_c\|^2+1]}{
            [1+\|\alpha\p_c\|^2-(\alpha\x_c^\T\p_c)^2]^2} \\
      &=& \frac{4\|\alpha\p_c\|^2-4(\alpha\x_c^\T\p_c)^2}{
            1+\|\alpha\p_c\|^2-(\alpha\x_c^\T\p_c)^2}.
  \end{eqnarray*}
  Therefore, the equality \eqref{iterate-length} holds.
\end{proof}

Whereafter, the damped factor $\alpha$ could be determined by an inexact line earch
owing to the following theorem.

\begin{Theorem}\label{Th: Step-size}
  Suppose that $\p_c$ is a gradient-related direction satisfying \eqref{grad-relat}
  and $\x_{c+1}(\alpha)$ is generated by (\ref{iterate-update}).
  Let $\eta\in(0,1)$ and $\g(\x_c)\neq 0$.
  Then, there is an $\tilde{\alpha}_c>0$ such that
  for all $\alpha\in(0,\tilde{\alpha}_c]$,
  \begin{equation}\label{suf-dec}
    f(\x_{c+1}(\alpha)) \leq f(\x_c) + \eta\alpha\p_c^\T\g(\x_c).
  \end{equation}
\end{Theorem}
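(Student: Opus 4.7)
The plan is to view the right hand side of \eqref{iterate-update} as a smooth curve $\x_{c+1}(\alpha)$ on $\SPHERE$ emanating from $\x_c$ at $\alpha=0$, apply the chain rule to $f \circ \x_{c+1}$, and then invoke the standard Armijo-style continuity argument.

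First, I would verify directly from \eqref{iterate-update} that $\x_{c+1}(0)=\x_c$ (both numerator and denominator reduce to $\x_c$ and $1$, respectively), and then compute $\x_{c+1}'(0)$. Writing the numerator as $N(\alpha)=[(1-\alpha\x_c^\T\p_c)^2-\alpha^2\|\p_c\|^2]\x_c+2\alpha\p_c$ and the denominator as $D(\alpha)=1+\alpha^2\|\p_c\|^2-\alpha^2(\x_c^\T\p_c)^2$, one has $N'(0)=-2(\x_c^\T\p_c)\x_c+2\p_c$ and $D'(0)=0$, so
\begin{equation*}
    \x_{c+1}'(0) \;=\; \frac{N'(0)D(0)-N(0)D'(0)}{D(0)^2} \;=\; 2\bigl(\p_c-(\x_c^\T\p_c)\x_c\bigr),
\end{equation*}
which is the projection of $\p_c$ onto the tangent space of $\SPHERE$ at $\x_c$.

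Next, I would use the chain rule together with the key identity \eqref{xkgk}, namely $\x_c^\T\g(\x_c)=0$, to obtain
\begin{equation*}
    \left.\frac{\diff}{\diff\alpha} f(\x_{c+1}(\alpha))\right|_{\alpha=0}
    \;=\; \g(\x_c)^\T\x_{c+1}'(0)
    \;=\; 2\p_c^\T\g(\x_c) - 2(\x_c^\T\p_c)\,\x_c^\T\g(\x_c)
    \;=\; 2\p_c^\T\g(\x_c).
\end{equation*}
Since $\p_c$ is gradient-related (so in particular $\p_c^\T\g(\x_c)<0$ whenever $\g(\x_c)\neq 0$), this directional derivative is strictly negative.

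Finally, I would finish with the standard Armijo continuity argument. Define
\begin{equation*}
    \phi(\alpha) \;\equiv\; f(\x_{c+1}(\alpha)) - f(\x_c) - \eta\alpha\,\p_c^\T\g(\x_c).
\end{equation*}
Then $\phi$ is continuously differentiable in a neighborhood of $0$ (the denominator in \eqref{iterate-update} is bounded below by $1$), $\phi(0)=0$, and by the previous paragraph
\begin{equation*}
    \phi'(0) \;=\; 2\p_c^\T\g(\x_c) - \eta\,\p_c^\T\g(\x_c) \;=\; (2-\eta)\,\p_c^\T\g(\x_c) \;<\; 0,
\end{equation*}
because $\eta\in(0,1)$. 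Hence there exists $\tilde{\alpha}_c>0$ such that $\phi(\alpha)\leq 0$ for every $\alpha\in(0,\tilde{\alpha}_c]$, which is exactly \eqref{suf-dec}. The only subtle point in the whole argument is the cancellation of the $(\x_c^\T\p_c)\,\x_c^\T\g(\x_c)$ term; everything else is mechanical differentiation and a one-line continuity argument, so I do not anticipate a serious obstacle.
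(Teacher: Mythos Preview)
Your proposal is correct and follows essentially the same line as the paper's own proof: compute $\x_{c+1}(0)=\x_c$ and $\x_{c+1}'(0)=2(\p_c-(\x_c^\T\p_c)\x_c)$, use \eqref{xkgk} to obtain $\left.\frac{\diff}{\diff\alpha}f(\x_{c+1}(\alpha))\right|_{\alpha=0}=2\p_c^\T\g(\x_c)<0$, and then finish with a first-order Taylor/Armijo argument exploiting $\eta<2$. The only cosmetic difference is that you package the last step via the auxiliary function $\phi$, whereas the paper writes out the Taylor expansion directly.
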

\begin{proof}
  From \eqref{iterate-update}, we obtain
  $\x_{c+1}(0)=\x_c$ and $\x'_{c+1}(0)=-2\x_c^\T\p_c\x_c+2\p_c$.
  Hence, we have
  \begin{equation*}
    \left.\frac{\diff f(\x_{c+1}(\alpha))}{\diff \alpha}\right|_{\alpha=0}
      = \g(\x_{c+1}(0))^\T\x'_{c+1}(0)
      = \g(\x_c)^\T(-2\x_c^\T\p_c\x_c+2\p_c)
      = 2\p_c^\T\g(\x_c),
  \end{equation*}
  where the last equality holds for \eqref{xkgk}.
  Since $\g(\x_c)\neq 0$ and $\p_c$ satisfies \eqref{grad-relat},
  we have $\p_c^\T\g(\x_c)<0$.
  Then, by Taylor's theorem, for a sufficiently small $\alpha$, we obtain
  \begin{equation*}
    f(\x_{c+1}(\alpha)) = f(\x_c) + 2\alpha\p_c^\T\g(\x_c) + o(\alpha^2).
  \end{equation*}
  Owing to $\eta<2$, there exists a positive $\tilde{\alpha}_c$ such that
  \eqref{suf-dec} is valid.
\end{proof}

Finally, we present the new Algorithm \ref{alg} formally.
Roughly speaking, CEST is a modified version of the state-of-the-art L-BFGS method
for unconstrained optimization. Due to the spherical constraint imposed here,
we use the Cayley transform explicitly to preserve iterates on a unit sphere.
An inexact line search is employed to determine a suitable damped factor.
Theorem \ref{Th: Step-size} indicates that the inexact line search is well-defined.

\renewcommand{\thealgorithm}{CEST}
\begin{algorithm}[tb!]
\caption{Computing eigenvalues of sparse tensors.}\label{alg}
\begin{algorithmic}[1]
  \STATE For a given uniform hypergraph $G_r$, we compute the degree vector $\dvec$.

  \STATE Choose an initial unit iterate $\x_1$,
    a positive integer $L$, parameters $\eta \in (0,1)$,
    $\beta\in(0,1)$, and $c \gets 1$.

  \WHILE{the sequence of iterates does not converge}

    \STATE Compute $\Ten\x_c^{k-1}$ and $\Ten\x_c^k$ using the codes in Figure \ref{Exp-Matlab},
             where $\Ten \in\{\Adj,\Lap,\sLp\}$.

    \STATE Calculate $\lambda_c=f(\x_c)$ and $\g(\x_c)$ by
             \eqref{Sph-Opt} and \eqref{Opt-grad} respectively.
    \STATE Generate $\p_c=-H_c\g(\x_c)$ by Algorithm \ref{L-BFGS 2-loop}.

    \STATE Choose the smallest nonnegative integer $\ell$ and
      calculate $\alpha=\beta^{\ell}$ such that \eqref{suf-dec} holds.

    \STATE Let $\alpha_c=\beta^{\ell}$ and
      update the new iterate $\x_{c+1}=\x_{c+1}(\alpha_c)$ by \eqref{iterate-update}.
    \STATE Compute $\s_c, \y_c$ and $\rho_c$ by \eqref{def-sy} and \eqref{def-rho} respectively.
    \STATE $c \gets c+1.$
  \ENDWHILE
\end{algorithmic}
\end{algorithm}

\section{Convergence analysis}

First, we prove that the sequence of merit function values $\{f(\x_c)\}$
converges and every accumulation point of iterates $\{\x_c\}$ is a first-order
stationary point. Second, using the Kurdyka-{\L}ojasiewicz property,
we show that the sequence of iterates $\{\x_c\}$ is also convergent.
When the second-order sufficient condition holds at the limiting point,
CEST enjoys a linear convergence rate. Finally, when we start CEST from
plenty of randomly initial points, resulting eigenvalues may touch the extreme eigenvalue
of a tensor with a high probability.

\subsection{Basic convergence theory}

If CEST terminates finitely, i.e.,
there exists an iteration $c$ such that $\g(\x_c)=0$,
we immediately know that $f(\x_c)$ is an eigenvalue and $\x_c$ is
the corresponding eigenvector by Theorem \ref{Th: opt-cond}. So,
in the remainder of this section, we assume that
CEST generates an infinite sequence of iterates $\{\x_c\}$.

Since the symmetric tensor $\Btn$ is positive definite,
the merit function $f(\x)$ is twice continuously differentiable.
Owing to the compactness of the spherical domain of $f(\x)$,
we obtain the following bounds \cite{CQW-15}.

\begin{Lemma}\label{Lem:fg-bnd}
  There exists a constant $M>1$ such that
  \begin{equation*}
    |f(\x)| \leq M, \quad \|\g(\x)\| \leq M, ~~\text{ and }~~
    \|\nabla^2f(\x)\| \leq M, \qquad \forall~\x\in\SPHERE.
  \end{equation*}
\end{Lemma}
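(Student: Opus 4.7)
The plan is to reduce this lemma to a routine compactness argument, since the three quantities to bound are all continuous functions on the compact set $\SPHERE$. The only subtlety is verifying that $f$ is genuinely smooth on a neighborhood of $\SPHERE$, which requires showing that its denominator stays bounded away from zero.

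First I would argue that $\Btn \x^k$ is bounded below by a strictly positive constant on the sphere. Since $\Btn$ is assumed positive definite (in the sense used throughout the paper, namely $\Btn \x^k > 0$ for all nonzero $\x$), the map $\x \mapsto \Btn \x^k$ is a continuous, strictly positive function on the compact set $\SPHERE$; hence there exists $b > 0$ with $\Btn \x^k \geq b$ for every $\x \in \SPHERE$. Combined with the fact that the numerator $\Ten \x^k$ is a polynomial (hence smooth), this shows that $f(\x) = \Ten \x^k / \Btn \x^k$ extends to a $C^\infty$ function on the open set $\{\x : \Btn \x^k > b/2\}$, which contains $\SPHERE$.

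Next, because $f$ is $C^\infty$ on a neighborhood of $\SPHERE$, its gradient $\g(\x)$ (given explicitly by \eqref{Opt-grad}) and its Hessian $\nabla^2 f(\x)$ are continuous functions on $\SPHERE$. Applying the extreme value theorem three times — to $|f|$, to $\|\g\|$, and to the spectral norm $\|\nabla^2 f\|$ — on the compact set $\SPHERE$ produces three finite upper bounds $M_0, M_1, M_2$. Taking $M = \max\{M_0, M_1, M_2, 1 + \epsilon\}$ for any $\epsilon > 0$ ensures $M > 1$ and simultaneously dominates all three quantities.

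There is really no hard step here; the only thing that could go wrong is if $\Btn \x^k$ vanished somewhere on $\SPHERE$, and positive definiteness of $\Btn$ precludes exactly that. I would keep the write-up short, making explicit the lower bound on $\Btn \x^k$ so that the reader sees why the quotient in \eqref{Opt-grad} (and in the Hessian expression obtained by differentiating it once more) is smooth rather than merely defined pointwise.
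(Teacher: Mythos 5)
Your proposal is correct and matches the paper's argument: the paper likewise observes that positive definiteness of $\Btn$ makes $f$ twice continuously differentiable and then invokes compactness of $\SPHERE$ (citing \cite{CQW-15}), exactly the extreme-value argument you spell out. Your write-up simply makes explicit the lower bound on $\Btn\x^k$ that the paper leaves implicit.
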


Because the bounded sequence $\{f(\x_c)\}$ decreases monotonously, it converges.

\begin{Theorem}\label{Th:eigvalue-conv}
  Assume that CEST generates an infinite sequence of merit functions $\{f(\x_c)\}$.
  Then, there exists a $\lambda_*$ such that
  \begin{equation*}
    \lim_{c\to\infty} f(\x_c) = \lambda_*.
  \end{equation*}
\end{Theorem}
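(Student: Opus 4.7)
The plan is to establish convergence of $\{f(\x_c)\}$ via the classical monotone bounded sequence argument. The two ingredients are boundedness (already available) and monotone decrease (coming from the backtracking line search built into CEST).

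First I would invoke Lemma \ref{Lem:fg-bnd} to conclude that $|f(\x_c)| \leq M$ for every $c$, so the sequence $\{f(\x_c)\}$ is bounded below (and above). Next I would verify that it is monotonically nonincreasing. By construction, the search direction $\p_c=-H_c\g(\x_c)$ produced by Algorithm \ref{L-BFGS 2-loop} is gradient-related; since the safeguard \eqref{def-rho} keeps the L-BFGS matrix $H_c$ positive definite, one has $\p_c^\T\g(\x_c) \leq 0$, with strict inequality whenever $\g(\x_c)\neq \vt{0}$. By Theorem \ref{Th: Step-size}, the backtracking procedure in Step 7 of CEST terminates with a positive step size $\alpha_c$ satisfying the Armijo-type condition \eqref{suf-dec}, i.e.
\begin{equation*}
    f(\x_{c+1}) \leq f(\x_c) + \eta\alpha_c\p_c^\T\g(\x_c) \leq f(\x_c).
\end{equation*}
If at some iteration $\g(\x_c)=\vt{0}$ then Algorithm CEST would terminate finitely, contradicting the standing assumption of an infinite sequence; hence $\p_c^\T\g(\x_c)<0$ and strict decrease holds whenever meaningful.

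Combining these two facts, $\{f(\x_c)\}$ is a nonincreasing sequence that is bounded below by $-M$. By the monotone convergence theorem for real sequences, it converges to some limit $\lambda_* \in [-M,M]$, which is the desired conclusion.

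There is essentially no genuine obstacle here: the only subtle point is ensuring the line search in Step 7 is indeed well-defined (so the monotone decrease is actually attained). This is already guaranteed by Theorem \ref{Th: Step-size} together with the positive definiteness of $H_c$ maintained by the safeguarded L-BFGS update in \eqref{def-rho}. No identification of $\lambda_*$ as an eigenvalue is needed at this stage; that will be addressed in subsequent results by examining accumulation points of $\{\x_c\}$ and invoking the Kurdyka-{\L}ojasiewicz property.
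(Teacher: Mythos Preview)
Your proposal is correct and follows exactly the paper's approach: the paper's entire proof is the single sentence preceding the theorem statement, ``Because the bounded sequence $\{f(\x_c)\}$ decreases monotonously, it converges.'' You have simply spelled out the two ingredients---boundedness from Lemma~\ref{Lem:fg-bnd} and monotone decrease from the Armijo condition \eqref{suf-dec}---in more detail than the paper does.
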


The next theorem shows that $\p_c = -H_c\g(\x_c)$ generated by L-BFGS is
a gradient-related direction.

\begin{Theorem}\label{Th: grad-dir}
  Suppose that $\p_c = -H_c\g(\x_c)$ is generated by L-BFGS. Then,
  there exist constants $0< C_L \leq 1 \leq C_U$ such that
  \begin{equation}\label{grad-relat}
    \p_c^\T\g(\x_c) \leq -C_L\|\g(\x_c)\|^2
    \qquad\text{ and }\qquad
    \|\p_c\| \leq C_U\|\g(\x_c)\|.
  \end{equation}
\end{Theorem}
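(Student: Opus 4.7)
The plan is to establish uniform two-sided spectral bounds on the L-BFGS matrices $H_c$: constants $0<C_L\leq 1 \leq C_U$ such that $C_L I \preceq H_c \preceq C_U I$ for every iteration $c$. Once such bounds are in hand, both inequalities in \eqref{grad-relat} follow at once from $\p_c=-H_c\g(\x_c)$, giving $\p_c^\T\g(\x_c) = -\g(\x_c)^\T H_c \g(\x_c) \leq -C_L\|\g(\x_c)\|^2$ and $\|\p_c\| \leq \|H_c\|\,\|\g(\x_c)\| \leq C_U\|\g(\x_c)\|$.

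I would begin by collecting a priori bounds on the building blocks of the recursion. Since all iterates lie on $\SPHERE$, $\|\s_c\|\leq 2$, while Lemma \ref{Lem:fg-bnd} yields $\|\y_c\|\leq 2M$. The safeguard \eqref{def-rho} ensures that whenever $\rho_{c-\ell}>0$ one has $\y_{c-\ell}^\T\s_{c-\ell}\geq \kappa_\epsilon$, so $\rho_{c-\ell}\in[1/(4M),\,1/\kappa_\epsilon]$. A short case analysis of \eqref{BB-steps} and \eqref{Dai-steps}, combined with the fallback $\gamma_c=1$ when $\y_c^\T\s_c<\kappa_\epsilon$, produces constants $0<\underline{\gamma}\leq\bar{\gamma}$ with $\gamma_c\in[\underline{\gamma},\bar{\gamma}]$ for all $c$, because in every active case the ratios involved in the Barzilai-Borwein formulas are controlled above by $\|\s_c\|^2/\kappa_\epsilon$ and below by $\kappa_\epsilon/\|\y_c\|^2$.

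Next I propagate these bounds through \eqref{Hk-rec}. Skipped updates, for which $\rho_{c-\ell}=0$, leave $H_c^{(j)}$ unchanged since $V_{c-\ell}=I$, so only the at-most-$L$ active updates need control. I switch to the dual Hessian approximation $B_c^{(j)} = (H_c^{(j)})^{-1}$, for which the standard BFGS identity $B^+ = B - (B\s\s^\T B)/(\s^\T B\s) + \y\y^\T/(\y^\T\s)$ gives the trace inequality $\mathrm{tr}(B^+)\leq \mathrm{tr}(B) + 4M^2/\kappa_\epsilon$ and the determinant identity $\det(B^+) = \det(B)\cdot(\y^\T\s)/(\s^\T B\s)$, the latter bounded below by $\det(B)\cdot \kappa_\epsilon/(4\,\mathrm{tr}(B))$. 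Starting from $B_c^{(0)}=\gamma_c^{-1}I$ and chaining at most $L$ such updates yields uniform bounds $\mathrm{tr}(B_c)\leq \tau_U$ and $\det(B_c)\geq \tau_L>0$. Together these imply $\lambda_{\max}(B_c)\leq \tau_U$ and $\lambda_{\min}(B_c)\geq \tau_L/\tau_U^{n-1}$, which upon inversion deliver the desired sandwich on $H_c$; shrinking $C_L$ and enlarging $C_U$ if necessary enforces $C_L \leq 1 \leq C_U$.

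The main obstacle is the determinant lower bound: $\s^\T B\s$ could in principle be small if $\s$ aligns with a low-eigenvalue direction of $B$, making individual factors $(\y^\T\s)/(\s^\T B\s)$ in $\det(B^+)$ potentially large and leaving the running size of $\det(B_c)$ unclear. The crude but effective estimate $\s^\T B\s \leq \|\s\|^2\,\mathrm{tr}(B)$, together with the running trace bound, is exactly what closes the loop and prevents collapse of $\det(B_c)$ over the $L$ updates. A secondary subtlety is that without the fallback $\gamma_c=1$ the scalings in \eqref{BB-steps}-\eqref{Dai-steps} can degenerate when $\y_c^\T\s_c$ is tiny, so the safeguard built into \eqref{def-rho} is essential for the uniform boundedness of $\gamma_c$ that seeds the whole recursion.
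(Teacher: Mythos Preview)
Your argument is correct and follows the classical Powell--Byrd--Nocedal trace/determinant route on the direct Hessian approximations $B_c=H_c^{-1}$, whereas the paper works entirely with $H_c$ itself: it establishes the one-step recursions $\|H_c^+\|\leq\|H_c\|(1+4M/\kappa_\epsilon)^2+4/\kappa_\epsilon$ and $\mu_{\min}(H_c^+)\geq\frac{\kappa_\epsilon}{\kappa_\epsilon+4M^2\|H_c\|}\,\mu_{\min}(H_c)$ by direct estimates (the second by minimizing the quadratic form $\z^\T H_c^+\z$ as a function of $t=\s_c^\T\z$), and then chains each through the $L$ updates starting from $H_c^{(0)}=\gamma_c I$ with the same bounds on $\gamma_c$ you derived. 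The practical difference is that the paper's constants $C_L,C_U$ depend only on $M$, $\kappa_\epsilon$, and $L$, not on the dimension $n$, while your determinant argument introduces $n$ through $\mathrm{tr}(B_c^{(0)})=n/\gamma_c$ and through the factor $\tau_U^{\,n-1}$ in the final eigenvalue bound; for a paper aimed at hypergraphs with millions of vertices, dimension-free constants are worth having, even though the theorem as stated only asks for existence.
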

\begin{proof}
  See Appendix A.
\end{proof}

Using the gradient-related direction, we establish bounds for damped factors
generated by the inexact line search.

\begin{Lemma}\label{Lem:step-min}
  There exists a constant $\alpha_{\min}>0$ such that
  \begin{equation*}
    \alpha_{\min} \leq \alpha_c \leq 1,
    \qquad \forall\,c.
  \end{equation*}
\end{Lemma}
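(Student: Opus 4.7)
The upper bound is trivial: the backtracking in Step 7 of CEST starts at $\ell=0$, which gives $\alpha=\beta^0=1$, so $\alpha_c\le1$ automatically. The whole work is in the lower bound, and the plan is to produce a threshold $\tilde{\alpha}>0$, independent of $c$, such that the sufficient-decrease condition \eqref{suf-dec} is always satisfied for every $\alpha\in(0,\tilde\alpha]$. Once this is done, if the smallest admissible integer in the backtracking is $\ell=0$, then $\alpha_c=1$; otherwise $\beta^{\ell-1}>\tilde\alpha$ and therefore $\alpha_c=\beta^{\ell}=\beta\cdot\beta^{\ell-1}>\beta\tilde{\alpha}$. Hence $\alpha_c\ge\alpha_{\min}:=\min\{1,\beta\tilde{\alpha}\}$.

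To obtain $\tilde\alpha$, I introduce the auxiliary function $\phi_c(\alpha)=f(\x_{c+1}(\alpha))-f(\x_c)-\eta\alpha\p_c^\T\g(\x_c)$ and expand it by Taylor's theorem on the interval $[0,\alpha]$. From Lemma \ref{Lem:xkiter} and the proof of Theorem \ref{Th: Step-size}, $\x_{c+1}(0)=\x_c$ and $\x'_{c+1}(0)=2\p_c-2(\x_c^\T\p_c)\x_c$, so combined with \eqref{xkgk} one obtains
\begin{equation*}
\phi_c(0)=0,\qquad \phi_c'(0)=(2-\eta)\p_c^\T\g(\x_c).
\end{equation*}
Theorem \ref{Th: grad-dir} then yields $\phi_c'(0)\le-(2-\eta)C_L\|\g(\x_c)\|^2$, which is strictly negative (the algorithm has not terminated, so $\g(\x_c)\ne0$).

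The main technical step is a uniform bound on $\phi_c''(\xi)$ for $\xi\in[0,\alpha]$. Differentiating $f(\x_{c+1}(\alpha))$ twice via the chain rule gives
\begin{equation*}
\phi_c''(\xi)=(\x'_{c+1}(\xi))^\T\nabla^2 f(\x_{c+1}(\xi))\x'_{c+1}(\xi)+\g(\x_{c+1}(\xi))^\T\x''_{c+1}(\xi).
\end{equation*}
Since $\x_{c+1}(\alpha)\in\SPHERE$ for all $\alpha$, Lemma \ref{Lem:fg-bnd} bounds both $\|\g\|$ and $\|\nabla^2 f\|$ by $M$. The explicit rational formula \eqref{iterate-update}, together with the denominator bound $1+\|\alpha\p_c\|^2-(\alpha\x_c^\T\p_c)^2\ge1$ already noted in the proof of Lemma \ref{Lem:xkiter}, lets me dominate $\|\x'_{c+1}(\xi)\|$ and $\|\x''_{c+1}(\xi)\|$ by absolute constants times polynomials in $\|\p_c\|$. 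Because Theorem \ref{Th: grad-dir} gives $\|\p_c\|\le C_U\|\g(\x_c)\|\le C_U M$, one gets a $c$-independent constant $C_2$ such that $|\phi_c''(\xi)|\le C_2\|\p_c\|^2\le C_2 C_U^2\|\g(\x_c)\|^2$.

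Assembling the pieces, Taylor's theorem yields
\begin{equation*}
\phi_c(\alpha)=\alpha\phi_c'(0)+\tfrac{1}{2}\alpha^2\phi_c''(\xi)\le-(2-\eta)C_L\alpha\|\g(\x_c)\|^2+\tfrac{1}{2}C_2C_U^2\alpha^2\|\g(\x_c)\|^2,
\end{equation*}
which is nonpositive as long as $\alpha\le\tilde{\alpha}:=\frac{2(2-\eta)C_L}{C_2C_U^2}$. The crucial point is that the factor $\|\g(\x_c)\|^2$ appears identically on both sides and cancels, so $\tilde\alpha$ is the same constant for every iteration. This gives the desired uniform lower bound $\alpha_c\ge\min\{1,\beta\tilde\alpha\}>0$.

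The main obstacle I expect is the second bullet: writing down a clean, $c$-independent bound for $\phi_c''(\xi)$ using the rational iterate \eqref{iterate-update}. The denominator bound from Lemma \ref{Lem:xkiter} and the uniform bound $\|\p_c\|\le C_UM$ are what make this work; otherwise one would only get a threshold dependent on $\|\g(\x_c)\|$, which is too weak for a uniform lower bound on $\alpha_c$.
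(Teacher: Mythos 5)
Your proposal is correct and follows the same overall strategy as the paper: establish a $c$-independent threshold $\tilde\alpha>0$ below which the sufficient-decrease condition \eqref{suf-dec} always holds, and then conclude $\alpha_c\ge\beta\tilde\alpha$ from the backtracking rule. The technical execution differs in one respect. The paper does not Taylor-expand $\alpha\mapsto f(\x_{c+1}(\alpha))$ to second order along the Cayley curve; instead it applies the descent lemma in $\x$-space, $f(\x_{c+1})-f(\x_c)\le\g(\x_c)^\T(\x_{c+1}-\x_c)+\tfrac{M}{2}\|\x_{c+1}-\x_c\|^2$, and then substitutes the \emph{exact} rational expressions from Lemma \ref{Lem:xkiter} for both the inner product (where the $\x_c$-component is annihilated by \eqref{xkgk}) and the squared step length \eqref{iterate-length}. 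This reduces everything to the single algebraic inequality \eqref{aaaaaaaa} and entirely avoids the step you flag as your main obstacle, namely a uniform bound on $\x''_{c+1}(\xi)$ and hence on $\phi_c''(\xi)$. Your route does work — $\x_{c+1}(\alpha)$ is a rational function with denominator bounded below by $1$, and $\|\p_c\|\le C_U M$ makes all the derivative bounds $c$-independent — but carrying out that estimate cleanly is the tedious part, and the only part of your write-up left as a sketch; the paper's use of the closed forms \eqref{iterate-update}--\eqref{iterate-length} is what lets it need the Hessian bound $M$ only once. Both arguments yield the same kind of threshold (yours $\tilde\alpha=\tfrac{2(2-\eta)C_L}{C_2C_U^2}$ versus the paper's $\hat\alpha=\tfrac{(2-\eta)C_L}{(2+\eta)MC_U^2}$), with the essential common feature that $\|\g(\x_c)\|^2$ cancels from both sides.
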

\begin{proof}
  Let $0<\alpha\leq\hat{\alpha}\equiv \frac{(2-\eta)C_L}{(2+\eta)MC_U^2}$.
  Hence, $\alpha C_UM \leq \frac{(2-\eta)C_L}{(2+\eta)C_U} <1$.
  From \eqref{grad-relat} and Lemma \ref{Lem:fg-bnd}, we obtain
  \begin{equation*}
    -\alpha\p_c^\T\g(\x_c) \leq \alpha\|\p_c\|\|\g(\x_c)\|
      \leq \alpha C_U\|\g(\x_c)\|^2 \leq \alpha C_UM^2 < M
  \end{equation*}
  and
  \begin{equation*}
    \|\alpha\p_c\|^2-(\alpha\x_c^\T\p_c)^2 \leq \alpha^2\|\p_c\|^2
    \leq \alpha^2C_U^2\|\g(\x_c)\|^2.
  \end{equation*}
  The above two inequalities and $\alpha\in(0,\hat{\alpha}]$ yield that
  \begin{eqnarray}
    \lefteqn{2\alpha\p_c^\T\g(\x_c)+2M(\|\alpha\p_c\|^2-(\alpha\x_c^\T\p_c)^2)
     -\eta\alpha\p_c^\T\g(\x_c)(1+\|\alpha\p_c\|^2-(\alpha\x_c^\T\p_c)^2)} \nonumber\\
      &=& (2-\eta)\alpha\p_c^\T\g(\x_c) + (2M-\eta\alpha\p_c^\T\g(\x_c))(\|\alpha\p_c\|^2-(\alpha\x_c^\T\p_c)^2) \nonumber\\
      &<& (2-\eta)\alpha\p_c^\T\g(\x_c) + (2+\eta)M\alpha^2C_U^2\|\g(\x_c)\|^2 \nonumber\\
      &\leq& (2-\eta)\alpha\p_c^\T\g(\x_c) + (2-\eta)C_L\alpha\|\g(\x_c)\|^2 \nonumber\\
      &\leq& 0, \label{aaaaaaaa}
  \end{eqnarray}
  where the last inequality holds for \eqref{grad-relat}.

  From the mean value theorem, Lemmas \ref{Lem:fg-bnd} and \ref{Lem:xkiter},
  and the equality \eqref{xkgk}, we have
  \begin{eqnarray*}
    f(\x_{c+1}(\alpha))-f(\x_c)
      &\leq& \g(\x_c)^\T(\x_{c+1}(\alpha)-\x_c)+\frac{1}{2}M\|\x_{c+1}(\alpha)-\x_c\|^2 \\
      &=& \frac{2\alpha\p_c^\T\g(\x_c)+2M(\|\alpha\p_c\|^2-(\alpha\x_c^\T\p_c)^2)}{1+\|\alpha\p_c\|^2-(\alpha\x_c^\T\p_c)^2} \\
      &<& \frac{\eta\alpha\p_c^\T\g(\x_c)(1+\|\alpha\p_c\|^2-(\alpha\x_c^\T\p_c)^2)}{1+\|\alpha\p_c\|^2-(\alpha\x_c^\T\p_c)^2} \\
      &=&\eta\alpha\p_c^\T\g(\x_c),
  \end{eqnarray*}
  where the last inequality is valid owing to \eqref{aaaaaaaa}.
  Due to the rule of the inexact search, the damped factor $\alpha_c$ satisfies
  $1\geq\alpha_c\geq \beta\hat{\alpha}\equiv\alpha_{\min}$.
\end{proof}

The next theorem proves that every accumulation point of iterates $\{\x_c\}$
is a first-order stationary point.

\begin{Theorem}\label{Th:basicConv}
  Suppose that CEST generates an infinite sequence of iterates $\{\x_c\}$. Then,
  \begin{equation*}
    \lim_{c\to\infty} \|\g(\x_c)\| = 0.
  \end{equation*}
\end{Theorem}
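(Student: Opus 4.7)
The plan is to chain together the three ingredients already in place: the Armijo-type descent condition \eqref{suf-dec}, the gradient-related bound from Theorem \ref{Th: grad-dir}, and the uniform lower bound on the damped factor from Lemma \ref{Lem:step-min}. Since $f(\x_c)$ is monotonically decreasing and bounded below by Lemma \ref{Lem:fg-bnd}, Theorem \ref{Th:eigvalue-conv} tells us $\sum_c [f(\x_c) - f(\x_{c+1})]$ is a convergent telescoping series. The idea is to show the individual decrements dominate $\|\g(\x_c)\|^2$ up to a positive constant, which forces the gradient norms into $\ell^2$ and hence to zero.

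Concretely, first I would apply \eqref{suf-dec} at iteration $c$ with the accepted step $\alpha_c$, obtaining $f(\x_{c+1}) - f(\x_c) \leq \eta \alpha_c \p_c^\T \g(\x_c)$. Next, I would invoke the gradient-related inequality $\p_c^\T \g(\x_c) \leq -C_L \|\g(\x_c)\|^2$ from Theorem \ref{Th: grad-dir}, which after multiplication by the (positive) factor $\eta \alpha_c$ yields $\eta \alpha_c \p_c^\T \g(\x_c) \leq -\eta \alpha_c C_L \|\g(\x_c)\|^2$. Combining these and using $\alpha_c \geq \alpha_{\min} > 0$ from Lemma \ref{Lem:step-min} gives the key one-step estimate
\begin{equation*}
  f(\x_c) - f(\x_{c+1}) \geq \eta \alpha_{\min} C_L \|\g(\x_c)\|^2.
\end{equation*}

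Finally, summing this inequality from $c = 1$ to $N$ and letting $N \to \infty$, the left side telescopes to $f(\x_1) - \lim_{c \to \infty} f(\x_c) = f(\x_1) - \lambda_*$, which is finite by Theorem \ref{Th:eigvalue-conv}. Hence
\begin{equation*}
  \sum_{c=1}^{\infty} \|\g(\x_c)\|^2 \leq \frac{f(\x_1) - \lambda_*}{\eta \alpha_{\min} C_L} < \infty,
\end{equation*}
and the summability of the squared norms immediately implies $\|\g(\x_c)\| \to 0$ as $c \to \infty$. There is really no serious obstacle here: the three ingredients lock together in the standard Zoutendijk-style argument, and the only subtlety worth double-checking is that the positive constants $C_L$, $\alpha_{\min}$, and $\eta$ are genuinely iteration-independent, which is guaranteed by the cited results (the L-BFGS memory $L$, the Armijo parameter $\eta$, and the backtracking factor $\beta$ are all fixed in advance).
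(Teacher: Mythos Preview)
Your proof is correct and follows essentially the same approach as the paper: combine the Armijo decrease \eqref{suf-dec}, the gradient-related bound \eqref{grad-relat}, and the uniform step-size bound from Lemma \ref{Lem:step-min} to obtain $f(\x_c)-f(\x_{c+1})\geq \eta\alpha_{\min}C_L\|\g(\x_c)\|^2$, then telescope and use the finiteness of $f(\x_1)-\lambda_*$ to conclude $\sum_c\|\g(\x_c)\|^2<\infty$. The only cosmetic difference is that the paper bounds $f(\x_1)-\lambda_*$ above by $2M$ via Lemma \ref{Lem:fg-bnd}, whereas you invoke Theorem \ref{Th:eigvalue-conv} directly; both are fine.
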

\begin{proof}
  From \eqref{suf-dec} and \eqref{grad-relat}, we get
  \begin{equation}\label{ccc-2}
    f(\x_c)-f(\x_{c+1}) \geq -\eta \alpha_c\p_c^\T\g(\x_c)
      \geq \eta\alpha_cC_L\|\g(\x_c)\|^2.
  \end{equation}
  Since Lemmas \ref{Lem:fg-bnd} and \ref{Lem:step-min}, we have
  \begin{equation*}
    2M \geq f(\x_1)-\lambda_*
      = \sum_{c=1}^{\infty} [f(\x_c)-f(\x_{c+1})]
      \geq \sum_{c=1}^{\infty} \eta\alpha_cC_L\|\g(\x_c)\|^2
      \geq \sum_{c=1}^{\infty} \eta\alpha_{\min}C_L\|\g(\x_c)\|^2.
  \end{equation*}
  That is to say,
  \begin{equation*}
    \sum_{c=1}^{\infty} \|\g(\x_c)\|^2 \leq \frac{2M}{\eta\alpha_{\min}C_L}
      < +\infty.
  \end{equation*}
  Hence, this theorem is valid.
\end{proof}

\subsection{Convergence of the sequence of iterates}

The Kurdyka-{\L}ojasiewicz property was discovered by
S. {\L}ojasiewicz \cite{Loj-63} for real-analytic functions in 1963.
Bolte et al. \cite{BDL-07} extended this property to nonsmooth functions.
Whereafter, KL property was widely applied in
analyzing proximal algorithms for nonconvex and nonsmooth optimization
\cite{ABRS10,ABS-13,BST-14,XuY-13}.

We remark that the merit function $f(\x)=\frac{\Ten\x^k}{\Btn\x^k}$
is a semialgebraic function since its graph
\begin{equation*}
    \mathrm{Graph}f = \{(\x,\lambda)\,:\, \Ten\x^k-\lambda\Btn\x^k=0 \}
\end{equation*}
is a semialgebraic set.
Therefore, $f(\x)$ satisfies the following KL property
\cite{BDL-07, AtB-09}.

\begin{Theorem}[KL property]\label{Th4-05}
  Suppose that $\x_*$ is a stationary point of $f(\x)$.
  Then, there is a neighborhood $\mathscr{U}$ of $\x_*$,
  an exponent $\theta\in[0,1)$, and a positive constant $C_K$ such that
  for all $\x\in\mathscr{U}$, the following inequality holds
  \begin{equation}\label{KL-ineq}
    |f(\x)-f(\x_*)|^{\theta} \leq C_K\|\g(\x)\|.
  \end{equation}
  Here, we define $0^0\equiv 0$.
\end{Theorem}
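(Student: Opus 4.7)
The strategy is to reduce the claim to the general Kurdyka-{\L}ojasiewicz inequality for semialgebraic functions, which is precisely what the statement signals by displaying the graph of $f$. The key observations are that $f$ is semialgebraic on an open neighborhood of the unit sphere, that $f$ is smooth on this neighborhood because $\Btn$ is positive definite, and that the general KL theorem then supplies both the exponent $\theta\in[0,1)$ and the constant $C_K$.

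First I would make the domain precise. Since $k$ is even and the symmetric tensor $\Btn$ is positive definite, $\Btn\x^k>0$ for every $\x\neq 0$; in particular there is an open cone $U\subset\REAL^n$ containing $\SPHERE$ on which $\Btn\x^k>0$. Restricted to $U$, the merit function $f(\x)=\Ten\x^k/\Btn\x^k$ is a quotient of two polynomials, hence a rational $C^{\infty}$ function. Its graph in $U\times\REAL$ is
\begin{equation*}
  \mathrm{Graph}\,f|_U = \{(\x,\lambda)\in U\times\REAL : \Ten\x^k - \lambda\,\Btn\x^k = 0\},
\end{equation*}
the zero set of a polynomial intersected with a semialgebraic set, hence semialgebraic. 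Consequently $f$ itself is a semialgebraic function on $U$ in the sense of \cite{BDL-07,AtB-09}.

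Next I would invoke the general Kurdyka-{\L}ojasiewicz inequality, due to {\L}ojasiewicz \cite{Loj-63} for real-analytic functions and extended in \cite{BDL-07,AtB-09} to the semialgebraic (and more generally $o$-minimal) setting. That theorem states: if $h$ is a semialgebraic continuously differentiable function and $\x_*$ is a critical point of $h$, then there exist a neighborhood $\mathscr{U}$ of $\x_*$, an exponent $\theta\in[0,1)$, and a constant $C_K>0$ such that $|h(\x)-h(\x_*)|^{\theta}\leq C_K\|\nabla h(\x)\|$ for all $\x\in\mathscr{U}$. Applying this with $h=f$ and $\nabla f(\x)=\g(\x)$ immediately yields the desired inequality \eqref{KL-ineq}. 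The convention $0^0\equiv 0$ covers the degenerate case $\theta=0$, in which (\ref{KL-ineq}) forces $\g(\x)$ to stay bounded away from zero whenever $f(\x)\neq f(\x_*)$.

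The one point that requires a brief sanity check is that $\g$ as defined in \eqref{Opt-grad} is indeed the ambient Euclidean gradient of $f$ on the open set $U$, so that the hypothesis of the semialgebraic KL theorem applies verbatim; since $f$ is zero-order homogeneous, this Euclidean gradient is automatically tangent to $\SPHERE$, as already recorded in \eqref{xkgk}. Beyond this bookkeeping there is no real obstacle: the heavy lifting is entirely absorbed by the general KL theorem for semialgebraic functions, and the specific form of $\Ten$ and $\Btn$ plays no further role in this proof.
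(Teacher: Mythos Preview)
Your proposal is correct and follows essentially the same route as the paper: the paper simply remarks that $\mathrm{Graph}\,f=\{(\x,\lambda):\Ten\x^k-\lambda\Btn\x^k=0\}$ is semialgebraic and then cites \cite{BDL-07,AtB-09} for the KL inequality, without further detail. Your write-up is in fact more careful than the paper's, since you explicitly verify the domain issue, the smoothness of $f$, and that $\g$ is the ambient gradient.
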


Using KL property, we will prove that
the infinite sequence of iterates $\{\x_c\}$ converges to a unique accumulation point.

\begin{Lemma}\label{Lem:KL-pre}
  Suppose that $\x_*$ is a stationary point of $f(\x)$, and
  $\mathscr{B}(\x_*,\rho)=\{\x\in\REAL^n : \|\x-\x_*\|\leq\rho\} \subseteq \mathscr{U}$
  is a neighborhood of $\x_*$. Let $\x_1$ be an initial point satisfying
  \begin{equation}\label{func-rho}
    \rho > \rho(\x_1)\equiv
      \frac{2C_UC_K}{\eta C_L (1-\theta)}|f(\x_1)-f(\x_*)|^{1-\theta} +\|\x_1-\x_*\|.
  \end{equation}
  Then, the following assertions hold:
  \begin{equation}\label{Lem:KL1a}
    \x_c\in\mathscr{B}(\x_*,\rho), \qquad c=1,2,\ldots,
  \end{equation}
  and
  \begin{equation}\label{Lem:KL1b}
    \sum_{c=1}^{\infty}\|\x_{c+1}-\x_c\| \leq
      \frac{2C_U C_K}{\eta C_L (1-\theta)}|f(\x_1)-f(\x_*)|^{1-\theta}.
  \end{equation}
\end{Lemma}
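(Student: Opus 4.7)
The plan is to carry out the standard Kurdyka–{\L}ojasiewicz (KL) telescoping argument, but tailored to the Cayley-transform update. I will first derive a one-step bound that links the step length $\|\x_{c+1}-\x_c\|$ to the functional decrease, and then combine it with the KL inequality to obtain a summable dominator of the step lengths. Finally, an induction on $c$ will lock the iterates inside $\mathscr{B}(\x_*,\rho)$ so that the KL inequality \eqref{KL-ineq} is actually applicable at every step.

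First I would derive an upper bound on the step length. From \eqref{iterate-length}, since the denominator under the square root is at least $1$, one gets $\|\x_{c+1}-\x_c\| \leq 2\|\alpha_c\p_c\|$, and then Theorem \ref{Th: grad-dir} yields
\begin{equation*}
\|\x_{c+1}-\x_c\| \leq 2\alpha_c C_U\|\g(\x_c)\|.
\end{equation*}
On the other hand, the sufficient decrease \eqref{ccc-2} gives $f(\x_c)-f(\x_{c+1}) \geq \eta\alpha_cC_L\|\g(\x_c)\|^2$. Introduce the desingularizing function $\varphi(t)=\frac{C_K}{1-\theta}t^{1-\theta}$, which is concave and $C^1$ on $(0,\infty)$ with $\varphi'(t)=C_Kt^{-\theta}$. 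If $\x_c\in\mathscr{U}$ and $r_c\equiv f(\x_c)-f(\x_*)>0$, then the KL bound \eqref{KL-ineq} gives $\varphi'(r_c)=C_Kr_c^{-\theta}\geq \|\g(\x_c)\|^{-1}$, and concavity gives
\begin{equation*}
\varphi(r_c)-\varphi(r_{c+1}) \geq \varphi'(r_c)\bigl(f(\x_c)-f(\x_{c+1})\bigr) \geq \frac{\eta\alpha_cC_L\|\g(\x_c)\|^2}{\|\g(\x_c)\|} = \eta\alpha_cC_L\|\g(\x_c)\|.
\end{equation*}
Combining the two displays yields the crucial per-step inequality
\begin{equation*}
\|\x_{c+1}-\x_c\| \leq \frac{2C_U}{\eta C_L}\bigl(\varphi(r_c)-\varphi(r_{c+1})\bigr).
\end{equation*}
Degenerate cases $\g(\x_c)=0$ or $r_c=0$ are handled separately: by Theorem \ref{Th: opt-cond} the first forces $\x_{c+1}=\x_c$, and the monotone decrease forces $r_j=0$ for all $j\geq c$, so the telescoping bound trivially holds.

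Next I would establish \eqref{Lem:KL1b} and \eqref{Lem:KL1a} simultaneously by induction. The induction hypothesis is that $\x_1,\ldots,\x_c\in\mathscr{B}(\x_*,\rho)\subseteq\mathscr{U}$, which legitimizes applying KL at each of those points. Telescoping the per-step inequality from $j=1$ to $j=c$ and using $\varphi(r_{c+1})\geq 0$ and monotonicity $r_{c+1}\leq r_1$ gives
\begin{equation*}
\sum_{j=1}^{c}\|\x_{j+1}-\x_j\| \leq \frac{2C_U}{\eta C_L}\varphi(r_1) = \frac{2C_UC_K}{\eta C_L(1-\theta)}|f(\x_1)-f(\x_*)|^{1-\theta}.
\end{equation*}
The triangle inequality then bounds $\|\x_{c+1}-\x_*\|\leq\|\x_1-\x_*\|+\sum_{j=1}^c\|\x_{j+1}-\x_j\|\leq \rho(\x_1)<\rho$, so $\x_{c+1}\in\mathscr{B}(\x_*,\rho)$, closing the induction. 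Letting $c\to\infty$ yields the summability bound \eqref{Lem:KL1b}.

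The main obstacle I anticipate is the interplay between the KL inequality and the induction: KL can only be invoked where it holds (inside $\mathscr{U}$), but whether the iterates stay there is exactly what we are trying to prove. The resolution is the usual bootstrap--the choice of $\rho(\x_1)$ in \eqref{func-rho} is precisely calibrated so that the telescoping sum, added to $\|\x_1-\x_*\|$, never escapes $\mathscr{B}(\x_*,\rho)$, which makes the induction self-sustaining. A minor technical nuisance is the possibility that $\|\g(\x_c)\|$ or $r_c$ vanishes mid-sequence; these cases need to be dispatched by the monotonicity/stationarity remark above so that dividing by $\|\g(\x_c)\|$ in the KL step is always legitimate.
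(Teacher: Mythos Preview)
Your proposal is correct and follows essentially the same route as the paper: define a concave desingularizing function, combine the KL inequality with the sufficient-decrease bound \eqref{ccc-2} and the step-length estimate $\|\x_{c+1}-\x_c\|\le 2\alpha_cC_U\|\g(\x_c)\|$ (derived from \eqref{iterate-length}) to obtain a telescoping bound on $\|\x_{c+1}-\x_c\|$, then close an induction using \eqref{func-rho}. Your explicit handling of the degenerate cases $\g(\x_c)=0$ and $r_c=0$ is a welcome addition (though note that $\x_{c+1}=\x_c$ when $\g(\x_c)=0$ follows directly from $\p_c=-H_c\g(\x_c)=0$ and \eqref{iterate-update}, not from Theorem~\ref{Th: opt-cond}).
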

\begin{proof}
  We proceed by induction.
  Obviously, $\x_1\in\mathscr{B}(\x_*,\rho)$.

  Now, we assume that $\x_i\in\mathscr{B}(\x_*,\rho)$ for all $i=1,\ldots,c$.
  Hence, KL property holds in these points.
  Let
  \begin{equation*}
    \phi(t) \equiv \frac{C_K}{1-\theta}|t-f(\x_*)|^{1-\theta}.
  \end{equation*}
  It is easy to prove that $\phi(t)$ is a concave function for $t>f(\x_*)$.
  Therefore, for $i=1,\ldots,c$, we have
  \begin{eqnarray*}
    \phi(f(\x_i))-\phi(f(\x_{i+1}))
      &\geq& \phi'(f(\x_i))(f(\x_i)-f(\x_{i+1})) \\
      &=& C_K|f(\x_i)-f(\x_*)|^{-\theta}(f(\x_i)-f(\x_{i+1})) \\
  \text{[KL property]} &\geq& \|\g(\x_i)\|^{-1}(f(\x_i)-f(\x_{i+1})) \\
  \text{[for \eqref{ccc-2}]} &\geq& \eta\alpha_iC_L\|\g(\x_i)\| \\
      &\geq& \frac{\eta C_L}{2C_U}\|\x_{i+1}-\x_i\|,
  \end{eqnarray*}
  where the last inequality is valid because
  \begin{equation*}
    \|\x_{c+1}-\x_c\|
      \leq 2\left(\|\alpha_c\p_c\|^2-(\alpha_c\x_c^\T\p_c)^2\right)^{\frac{1}{2}}
      \leq 2\alpha_c\|\p_c\|
      \leq 2\alpha_c C_U\|\g(\x_c)\|
  \end{equation*}
  by \eqref{iterate-length} and \eqref{grad-relat}.
  Then,
  \begin{eqnarray*}
    \|\x_{c+1}-\x_*\| &\leq& \sum_{i=1}^c\|\x_{i+1}-\x_i\|+\|\x_1-\x_*\| \\
      &\leq& \frac{2C_U}{\eta C_L}\sum_{i=1}^c \phi(f(\x_i))-\phi(f(\x_{i+1})) + \|\x_1-\x_*\| \\
      &\leq& \frac{2C_U}{\eta C_L}\phi(f(\x_1)) + \|\x_1-\x_*\| \\
      &<& \rho.
  \end{eqnarray*}
  Hence, we get $\x_{c+1}\in\mathscr{B}(\x_*,\rho)$ and \eqref{Lem:KL1a} holds.
  Moreover,
  \begin{equation*}
    \sum_{c=1}^{\infty}\|\x_{c+1}-\x_c\|
      \leq \frac{2C_U}{\eta C_L} \sum_{c=1}^{\infty}\phi(f(\x_c))-\phi(f(\x_{c+1}))
      \leq \frac{2C_U}{\eta C_L}\phi(f(\x_1)).
  \end{equation*}
  The inequality \eqref{Lem:KL1b} is valid.
\end{proof}

\begin{Theorem}\label{Th:KL-converg}
  Suppose that CEST generates an infinite sequence of iterates $\{\x_c\}$.
  Then,
  \begin{equation*}
    \sum_{c=1}^{\infty} \|\x_{c+1}-\x_c\| < +\infty.
  \end{equation*}
  Hence, the total sequence $\{\x_c\}$ has a finite length and
  converges to a unique stationary point.
\end{Theorem}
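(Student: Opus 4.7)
The plan is to reduce the general convergence statement to Lemma \ref{Lem:KL-pre} by showing that, although the initial iterate $\x_1$ need not be close to a stationary point, some later iterate $\x_{c_0}$ in the tail of the sequence must satisfy the hypothesis \eqref{func-rho} for an appropriate accumulation point $\x_*$. Since the theorem's conclusion depends only on the tail of $\{\x_c\}$, this suffices.

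First I would extract an accumulation point. The iterates lie on the compact unit sphere $\SPHERE$, so $\{\x_c\}$ admits at least one accumulation point $\x_* \in \SPHERE$ with a subsequence $\x_{c_j} \to \x_*$. By Theorem \ref{Th:basicConv}, $\|\g(\x_c)\| \to 0$, and since $\g$ is continuous (because $\Btn\x^k > 0$ on $\SPHERE$), $\g(\x_*) = 0$; hence $\x_*$ is a stationary point of $f$, and by Theorem \ref{Th:eigvalue-conv} together with continuity, $f(\x_c) \to f(\x_*) = \lambda_*$. Now invoke Theorem \ref{Th4-05} at $\x_*$ to obtain a neighborhood $\mathscr{U}$, an exponent $\theta \in [0,1)$, and a constant $C_K$ on which the KL inequality \eqref{KL-ineq} holds. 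Pick $\rho > 0$ small enough that $\mathscr{B}(\x_*, \rho) \subseteq \mathscr{U}$.

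Next I would choose the restart index. Because $\x_{c_j} \to \x_*$ and $f(\x_{c_j}) \to f(\x_*)$, the quantity
\begin{equation*}
    \rho(\x_{c_j}) \;=\; \frac{2C_UC_K}{\eta C_L (1-\theta)}|f(\x_{c_j})-f(\x_*)|^{1-\theta} + \|\x_{c_j}-\x_*\|
\end{equation*}
tends to zero as $j \to \infty$. Hence for all sufficiently large $j$ we have $\rho(\x_{c_j}) < \rho$, so \eqref{func-rho} is satisfied with the index $c_0 := c_j$ playing the role of the initial point. Applying Lemma \ref{Lem:KL-pre} to the tail sequence $\{\x_c\}_{c \geq c_0}$ (the lemma's proof uses only the one-step descent inequality \eqref{ccc-2} and the gradient-related bounds, both of which are iteration-independent) yields
\begin{equation*}
    \sum_{c=c_0}^{\infty}\|\x_{c+1}-\x_c\| \;\leq\; \frac{2C_UC_K}{\eta C_L(1-\theta)}\,|f(\x_{c_0})-f(\x_*)|^{1-\theta} \;<\; +\infty.
\end{equation*}
Combined with the finite head $\sum_{c=1}^{c_0-1}\|\x_{c+1}-\x_c\|$, this gives $\sum_{c=1}^{\infty}\|\x_{c+1}-\x_c\| < +\infty$, i.e.\ the finite-length assertion.

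Finite length implies that $\{\x_c\}$ is Cauchy in $\REAL^n$, hence converges to some $\x_\infty \in \SPHERE$. Since $\x_*$ was an accumulation point and the whole sequence converges, $\x_\infty = \x_*$, establishing uniqueness of the limit; by Theorem \ref{Th:basicConv} and continuity of $\g$, this limit is a stationary point. The main subtlety in this plan is verifying that Lemma \ref{Lem:KL-pre} can legitimately be restarted at iteration $c_0$: one must check that the L-BFGS memory, the step-size bounds $\alpha_c \in [\alpha_{\min},1]$, and the gradient-related constants $C_L, C_U$ from Theorem \ref{Th: grad-dir} all remain valid when the index origin is shifted, which they do because these bounds are uniform in $c$ and the descent inequality \eqref{ccc-2} holds at every step. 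With that observation the argument closes cleanly.
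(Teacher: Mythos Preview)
Your proposal is correct and follows essentially the same approach as the paper's proof: extract an accumulation point $\x_*$ by compactness of $\SPHERE$, verify it is stationary via Theorem~\ref{Th:basicConv}, find a tail index at which $\rho(\x_{c_0})<\rho$, and apply Lemma~\ref{Lem:KL-pre} to the tail. The paper states these steps tersely; you have supplied the justifications (continuity of $\g$, why $\rho(\x_{c_j})\to 0$, the Cauchy argument, and the legitimacy of restarting Lemma~\ref{Lem:KL-pre} at $c_0$) that the paper leaves implicit.
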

\begin{proof}
  Owing to the compactness of $\SPHERE$, there exists an accumulate point $\x_*$
  of iterates $\{\x_c\}$. By Theorem \ref{Th:basicConv}, $\x_*$ is also a stationary point.
  Then, there exists an iteration $K$ such that $\rho(\x_K)<\rho$.
  Hence, by Lemma \ref{Lem:KL-pre}, we have
  $\sum_{c=K}^\infty \|\x_{c+1}-\x_c\| < \infty$.
  The proof is complete.
\end{proof}

Next, we estimate the convergence rate of CEST.
The following lemma is useful.

\begin{Lemma}\label{Lem:KL-bd}
  There exists a positive constant $C_m$ such that
  \begin{equation}\label{eq4-13}
    \|\x_{c+1}-\x_c\| \geq C_m\|\g(\x_c)\|.
  \end{equation}
\end{Lemma}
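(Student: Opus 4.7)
The plan is to work directly from the distance formula \eqref{iterate-length} and separately lower-bound its numerator and upper-bound its denominator by quantities proportional to $\|\g(\x_c)\|^2$ and a constant respectively.

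Recall that \eqref{iterate-length} gives
\begin{equation*}
  \|\x_{c+1}-\x_c\|^2 = \frac{4\bigl(\|\alpha_c\p_c\|^2-(\alpha_c\x_c^\T\p_c)^2\bigr)}{1+\|\alpha_c\p_c\|^2-(\alpha_c\x_c^\T\p_c)^2}.
\end{equation*}
The key geometric observation is that the numerator equals $4\alpha_c^2\|\p_c^\perp\|^2$, where $\p_c^\perp \equiv \p_c - (\x_c^\T\p_c)\x_c$ is the component of $\p_c$ orthogonal to $\x_c$. Thus the first step is to produce a lower bound on $\|\p_c^\perp\|$.

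Next, I would combine two facts. First, by \eqref{xkgk} we have $\x_c^\T\g(\x_c)=0$, so $\g(\x_c)$ itself is orthogonal to $\x_c$ and hence $\p_c^\T\g(\x_c) = (\p_c^\perp)^\T\g(\x_c)$. Second, Theorem \ref{Th: grad-dir} supplies the gradient-related condition $\p_c^\T\g(\x_c)\leq -C_L\|\g(\x_c)\|^2$. Applying Cauchy--Schwarz to the inner product $(\p_c^\perp)^\T\g(\x_c)$ then yields
\begin{equation*}
  \|\p_c^\perp\|\,\|\g(\x_c)\| \geq |\p_c^\T\g(\x_c)| \geq C_L\|\g(\x_c)\|^2,
\end{equation*}
so $\|\p_c^\perp\|\geq C_L\|\g(\x_c)\|$. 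Combined with Lemma \ref{Lem:step-min} (which gives $\alpha_c\geq\alpha_{\min}$), the numerator is at least $4\alpha_{\min}^2 C_L^2 \|\g(\x_c)\|^2$.

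For the denominator, I would bound it above by $1+\|\alpha_c\p_c\|^2 \leq 1+C_U^2\|\g(\x_c)\|^2 \leq 1+C_U^2M^2$, using $\alpha_c\leq 1$, the bound $\|\p_c\|\leq C_U\|\g(\x_c)\|$ from Theorem \ref{Th: grad-dir}, and $\|\g(\x_c)\|\leq M$ from Lemma \ref{Lem:fg-bnd}. Taking the square root of the resulting ratio gives the claim with the explicit constant
\begin{equation*}
  C_m = \frac{2\alpha_{\min}C_L}{\sqrt{1+C_U^2M^2}}.
\end{equation*}

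The only subtle step is the orthogonal-decomposition argument in the second paragraph; everything else is a direct substitution of prior bounds. That step is not really difficult once one notices that $\x_c\perp\g(\x_c)$, which is precisely what makes the spherical-constraint geometry cooperate here.
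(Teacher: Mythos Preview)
Your proof is correct. Both you and the paper start from the squared-distance formula \eqref{iterate-length} and recognize that the quantity $\|\alpha_c\p_c\|^2-(\alpha_c\x_c^\T\p_c)^2$ governs everything, but the way you bound it below differs. The paper rewrites this quantity as $\alpha_c^2\|\p_c\|^2\sin^2\langle\x_c,\p_c\rangle$ and invokes the \emph{spherical triangle inequality} for the angle metric $\langle\cdot,\cdot\rangle$, combined with $\langle\x_c,-\g(\x_c)\rangle=\tfrac{\pi}{2}$, to obtain $\sin\langle\x_c,\p_c\rangle\geq\cos\langle-\g(\x_c),\p_c\rangle\geq C_L/C_U$; it then uses a separate bound $\|\p_c\|\geq C_L\|\g(\x_c)\|$. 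You instead write the same quantity as $\alpha_c^2\|\p_c^\perp\|^2$ and bound $\|\p_c^\perp\|$ directly via $\p_c^\T\g(\x_c)=(\p_c^\perp)^\T\g(\x_c)$ and Cauchy--Schwarz. Your route is shorter, avoids the angle-metric machinery, and actually yields a sharper constant: $\frac{2\alpha_{\min}C_L}{\sqrt{1+C_U^2M^2}}$ versus the paper's $\frac{2\alpha_{\min}C_L^2}{C_U(1+C_UM)}$ (since $C_L\geq C_L^2/C_U$ and $\sqrt{1+C_U^2M^2}\leq 1+C_UM$). The only tacit assumption in your cancellation of $\|\g(\x_c)\|$ is that $\g(\x_c)\neq 0$, but the inequality is trivial otherwise, so nothing is lost.
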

\begin{proof}
  Let $\langle\vt{a},\vt{b}\rangle$ be the angle
  between nonzero vectors $\vt{a}$ and $\vt{b}$, i.e.,
  \begin{equation*}
    \langle\vt{a},\vt{b}\rangle \equiv \arccos\frac{\vt{a}^\T\vt{b}}{\|\vt{a}\|\|\vt{b}\|}
      \in[0,\pi].
  \end{equation*}
  In fact, $\langle \cdot,\cdot \rangle$ is a metric in a unit sphere and
  satisfies the triangle inequality
  \begin{equation*}
    \langle\vt{a},\vt{b}\rangle \leq
      \langle\vt{a},\vt{c}\rangle + \langle\vt{c},\vt{b}\rangle
  \end{equation*}
  for all nonzero vectors $\vt{a},\vt{b}$, and $\vt{c}$.

  From the triangle inequality, we get
  \begin{equation*}
    \langle\x_c,-\g(\x_c)\rangle - \langle-\g(\x_c),\p_c\rangle
    \leq \langle\x_c,\p_c\rangle
    \leq \langle\x_c,-\g(\x_c)\rangle + \langle-\g(\x_c),\p_c\rangle.
  \end{equation*}
  Owing to \eqref{xkgk}, we know $\langle\x_c,-\g(\x_c)\rangle = \frac{\pi}{2}$.
  It yields that
  \begin{equation*}
    \frac{\pi}{2} - \langle-\g(\x_c),\p_c\rangle
    \leq \langle\x_c,\p_c\rangle
    \leq \frac{\pi}{2} + \langle-\g(\x_c),\p_c\rangle.
  \end{equation*}
  Hence, we have
  \begin{equation*}
    \sin \langle\x_c,\p_c\rangle
      \geq \sin\left(\frac{\pi}{2} - \langle-\g(\x_c),\p_c\rangle\right) \\
      = \cos\langle-\g(\x_c),\p_c\rangle \\
      = \frac{-\p_c^\T\g(\x_c)}{\|\p_c\|\|\g(\x_c)\|} \\
      \geq \frac{C_L}{C_U},
  \end{equation*}
  where the last inequality holds because of \eqref{grad-relat}.
  Recalling \eqref{iterate-length} and $\x_c\in\SPHERE$, we obtain
  \begin{equation*}
    \|\x_{c+1}-\x_c\| =
      2\left(\frac{\|\alpha_c\p_c\|^2(1-\cos^2\langle\x_c,\alpha_c\p_c\rangle)}{
            1+\|\alpha_c\p_c\|^2(1-\cos^2\langle\x_c,\alpha_c\p_c\rangle)}\right)^{\frac{1}{2}}
      = \frac{2\alpha_c\|\p_c\|\sin\langle\x_c,\alpha_c\p_c\rangle}{
            \sqrt{1+\alpha_c^2\|\p_c\|^2\sin^2\langle\x_c,\alpha_c\p_c\rangle}}.
  \end{equation*}
  Since $\alpha_{\min}\leq\alpha_c\leq 1$ and $\|\p_c\|\leq C_U\|\g(\x_c)\|\leq C_UM$,
  it yields that
  \begin{equation*}
    \|\x_{c+1}-\x_c\| \geq \frac{2\alpha_{\min}C_LC_U^{-1}}{
      \sqrt{1+C_U^2M^2}}\|\p_c\|
      \geq \frac{2\alpha_{\min}C_L}{C_U(1+C_UM)}\|\p_c\|.
  \end{equation*}
  From \eqref{grad-relat}, we have
  $\|\p_c\|\|\g(\x_c)\|\geq-\p_c^\T\g(\x_c)\geq C_L\|\g(\x_c)\|^2$.
  Hence, $\|\p_c\|\geq C_L\|\g(\x_c)\|$.
  Therefore, this lemma is valid by taking $C_m\equiv \frac{2\alpha_{\min}C_L^2}{C_U(1+C_UM)}$.
\end{proof}

\begin{Theorem}\label{Th:KL-rate}
  Suppose that $\x_*$ is the stationary point of an infinite sequence of iterates $\{\x_c\}$
  generated by CEST. Then, we have the following estimations.
  \begin{itemize}
    \item If $\theta\in(0,\tfrac{1}{2}]$, there exists a $\gamma>0$ and $\varrho\in(0,1)$ such that
      \begin{equation*}
        \|\x_c-\x_*\| \leq \gamma\varrho^c.
      \end{equation*}
    \item If $\theta\in(\tfrac{1}{2},1)$, there exists a $\gamma>0$ such that
      \begin{equation*}
        \|\x_c-\x_*\| \leq \gamma c^{-\frac{1-\theta}{2\theta-1}}.
      \end{equation*}
  \end{itemize}
\end{Theorem}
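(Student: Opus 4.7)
The plan is to bound $\|\x_c - \x_*\|$ by the tail sum $S_c \equiv \sum_{i=c}^{\infty}\|\x_{i+1}-\x_i\|$, which is finite by Theorem \ref{Th:KL-converg}. Since $\x_c\to\x_*$, the triangle inequality gives $\|\x_c-\x_*\|\leq S_c$, so it suffices to estimate how fast $S_c\downarrow 0$.

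First I would assemble the three ingredients that are already available once $\x_c$ lies in the KL-neighborhood $\mathscr{U}$ of $\x_*$ (which happens for all $c$ sufficiently large, by Theorem \ref{Th:KL-converg} and Lemma \ref{Lem:KL-pre}). From the proof of Lemma \ref{Lem:KL-pre}, applied with starting index $c$ instead of $1$, one obtains
\begin{equation*}
  S_c \;\leq\; \tilde{C}\,|f(\x_c)-f(\x_*)|^{1-\theta}, \qquad
  \tilde{C} \equiv \frac{2C_UC_K}{\eta C_L(1-\theta)}.
\end{equation*}
Combining the KL inequality \eqref{KL-ineq} with Lemma \ref{Lem:KL-bd} yields
\begin{equation*}
  |f(\x_c)-f(\x_*)|^{\theta} \;\leq\; C_K\|\g(\x_c)\|
  \;\leq\; \frac{C_K}{C_m}\|\x_{c+1}-\x_c\|
  \;=\; \frac{C_K}{C_m}(S_c-S_{c+1}).
\end{equation*}
Eliminating $|f(\x_c)-f(\x_*)|$ between the two displays (raise the first to power $\theta/(1-\theta)$ and use the second) gives the master recursion
\begin{equation*}
  S_c^{\,\theta/(1-\theta)} \;\leq\; \hat{C}\,(S_c - S_{c+1}),
  \qquad \hat{C} \equiv \frac{C_K}{C_m}\,\tilde{C}^{\theta/(1-\theta)}.
\end{equation*}

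From this recursion the two rates fall out by a standard split on the exponent $a\equiv\theta/(1-\theta)$. When $\theta\in(0,\tfrac{1}{2}]$ we have $a\leq 1$, so for $c$ large enough that $S_c\leq 1$ we get $S_c\leq S_c^{a}\leq \hat{C}(S_c-S_{c+1})$, which rearranges to $S_{c+1}\leq(1-\hat{C}^{-1})S_c$; iterating yields the geometric bound $\|\x_c-\x_*\|\leq S_c\leq\gamma\varrho^c$ with $\varrho=1-\hat{C}^{-1}\in(0,1)$. When $\theta\in(\tfrac{1}{2},1)$ we have $a>1$, and I would apply the classical Attouch--Bolte trick: use the antiderivative $h(s)=\tfrac{1}{1-a}s^{1-a}$, which is increasing on $(0,\infty)$ with $h'(s)=s^{-a}$, and observe that the monotonicity of $s^{-a}$ on $[S_{c+1},S_c]$ gives
\begin{equation*}
  h(S_c)-h(S_{c+1}) \;\geq\; S_c^{-a}(S_c-S_{c+1}) \;\geq\; \hat{C}^{-1},
\end{equation*}
by the master recursion. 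Telescoping from some index $K$ up to $c$ yields $h(S_c)\geq h(S_K)+(c-K)/\hat{C}$, i.e.\ $S_c^{1-a}\gtrsim c$, which is exactly $S_c\leq\gamma c^{-1/(a-1)}=\gamma c^{-(1-\theta)/(2\theta-1)}$.

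Most of the argument is bookkeeping once the master recursion is in hand; the main obstacle is really just the legitimacy of invoking the KL inequality and the summability bound simultaneously for all $c$ large, which requires arguing that $\x_c$ eventually enters the KL ball $\mathscr{B}(\x_*,\rho)$. This is handled by applying Lemma \ref{Lem:KL-pre} to the shifted sequence starting from a late index $K$ at which $\rho(\x_K)<\rho$; existence of such a $K$ follows from $f(\x_c)\to f(\x_*)=\lambda_*$ (Theorem \ref{Th:eigvalue-conv}) and $\x_c\to\x_*$. The constants absorbed into $\gamma$ then depend only on $K$, $\x_K$, $\hat{C}$, and $\tilde{C}$, which is harmless.
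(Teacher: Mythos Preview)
Your argument is correct and is exactly the Attouch--Bolte route the paper defers to: the paper's own proof merely cites \cite[Theorem~2]{AtB-09} and \cite[Theorem~7]{CQW-15} after noting that Lemma~\ref{Lem:KL-bd} supplies the missing lower bound $\|\x_{c+1}-\x_c\|\gtrsim\|\g(\x_c)\|$, and you have reproduced that argument in full. One small slip: your telescoping inequality should read $h(S_K)-h(S_c)\geq(c-K)/\hat{C}$ (the direction is reversed), but since $1-a<0$ this still yields $S_c^{1-a}\gtrsim c$ and the stated rate, so the conclusion stands.
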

\begin{proof}
  Because of the validation of Lemma \ref{Lem:KL-bd}, the proof of this theorem
  is similar to \cite[Theorem 2]{AtB-09} and \cite[Theorem 7]{CQW-15}.
\end{proof}

Liu and Nocedal \cite[Theorem 7.1]{LiN-89} proved that L-BFGS converges linearly
if the level set of $f(\x)$ is convex and
the second-order sufficient condition at $\x_*$ holds.
We remark here that, if the second-order sufficient condition holds,
the exponent $\theta=\frac{1}{2}$ in KL property \eqref{KL-ineq}.
According to Theorem \ref{Th:KL-rate}, the infinite sequence of iterates $\{\x_c\}$
has a linear convergence rate.
Hence, to obtain the same local linear convergence rate in theory,
we assume $\theta=\frac{1}{2}$ in KL property
is weaker than the second-order sufficient condition.

\subsection{On the extreme eigenvalue}\label{GloS}

For the target of computing the smallest eigenvalue of
a large scale sparse tensor arising from a uniform hypergraph,
we start CEST from plenty of randomly initial points.
Then, we regard the resulting smallest merit function value as the smallest eigenvalue
of this tensor. The following theorem reveals the successful probability of this strategy.

\begin{Theorem}
  Suppose that we start CEST from $N$ initial points which are sampled from $\SPHERE$ uniformly
  and regard the resulting smallest merit function value as the smallest eigenvalue.
  Then, there exists a constant $\varsigma\in(0,1]$ such that
  the probability of obtaining the smallest eigenvalue is at least
  \begin{equation}\label{Prob}
    1-(1-\varsigma)^N.
  \end{equation}
  Therefore, if the number of samples $N$ is large enough,
  we obtain the smallest eigenvalue with a high probability.
\end{Theorem}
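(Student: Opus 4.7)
The plan is to exhibit an open subset $U$ of $\SPHERE$, of positive uniform measure, such that CEST started from any $\x_1 \in U$ produces an iterate sequence whose merit function values converge to the smallest eigenvalue; a standard Bernoulli argument then yields \eqref{Prob}.

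First, I would set $\lambda_{\min} \equiv \min_{\x \in \SPHERE} f(\x)$, which is attained at some $\x_* \in \SPHERE$ by compactness. By Theorem~\ref{Th: opt-cond}, $\lambda_{\min}$ is an eigenvalue of $\Ten$ (the smallest one accessible via the merit function). The next key ingredient is a spectral gap: since $\Ten\x^k$ and $\Btn\x^k$ are polynomials, the critical-point system on $\SPHERE$ is semi-algebraic, hence by the semi-algebraic Sard theorem the set of critical values of $f$ on $\SPHERE$ is finite. Consequently, there exists $\delta>0$ such that no eigenvalue lies in the interval $(\lambda_{\min},\lambda_{\min}+\delta]$.

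Next, I would invoke Lemma~\ref{Lem:KL-pre} at $\x_*$ to obtain a neighborhood $\mathscr{B}(\x_*,\rho) \subseteq \mathscr{U}$ on which the KL inequality \eqref{KL-ineq} holds with exponent $\theta$ and constant $C_K$. Because $f$ and $\|\cdot\|$ are continuous and $\rho(\x_1)\to 0$ as $\x_1\to\x_*$, there is an open sphere-neighborhood $U$ of $\x_*$ such that every $\x_1 \in U$ satisfies both $\rho(\x_1) < \rho$ and $f(\x_1) < \lambda_{\min}+\delta$. For any such initial point, Lemma~\ref{Lem:KL-pre} traps the CEST iterates in $\mathscr{B}(\x_*,\rho)$ and Theorem~\ref{Th:KL-converg} guarantees convergence to a stationary point $\bar{\x} \in \mathscr{B}(\x_*,\rho)$. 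By Theorem~\ref{Th: opt-cond}, $f(\bar{\x})$ is an eigenvalue of $\Ten$, and by the monotone decrease \eqref{ccc-2} we have $f(\bar\x) \leq f(\x_1) < \lambda_{\min}+\delta$. The gap property then forces $f(\bar\x) = \lambda_{\min}$, so CEST recovers the smallest eigenvalue.

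Finally, let $\mu$ denote the uniform measure on $\SPHERE$ and set $\varsigma \equiv \mu(U)/\mu(\SPHERE)$; since $U$ is nonempty and open, $\varsigma \in (0,1]$. The $N$ initial points are i.i.d.\ uniform samples, and each lies in $U$ with probability at least $\varsigma$ independently. Hence the probability that none of the $N$ initial points falls in $U$ is at most $(1-\varsigma)^N$, and the probability that the best resulting merit value equals $\lambda_{\min}$ is at least $1-(1-\varsigma)^N$, which gives \eqref{Prob}. As $N \to \infty$, this bound tends to $1$.

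The main obstacle is the spectral-gap claim in the first paragraph. Without finiteness of critical values, the limit $f(\bar\x)$ might only be bounded by $\lambda_{\min}+\delta$ and could correspond to a critical value arbitrarily close to, but not equal to, $\lambda_{\min}$. Exploiting the polynomial/semi-algebraic structure of $f$ is essential here; an alternative would be to shrink $U$ to exclude all non-globally-minimal stationary points from the trapping ball $\mathscr{B}(\x_*,\rho)$, but this too ultimately relies on some form of isolation of the minimal level set.
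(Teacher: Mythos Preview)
Your proof is correct and follows the same overall architecture as the paper's: identify an open neighborhood of a global minimizer $\x_*$ on $\SPHERE$ of positive measure from which CEST is guaranteed to return the minimal eigenvalue, then apply the Bernoulli bound. The paper likewise invokes Lemma~\ref{Lem:KL-pre} and the continuity of $\rho(\cdot)$ at $\x_*$ to define the neighborhood $\mathscr{V}(\x^*)=\{\x\in\SPHERE:\rho(\x)<\rho\}$, and takes $\varsigma$ as the ratio of hypervolumes.

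Where you differ is in the treatment of the limit point. The paper asserts directly that any run started in $\mathscr{V}(\x^*)$ converges to $\x^*$ itself, citing Lemma~\ref{Lem:KL-pre} and Theorem~\ref{Th:KL-converg}; but those results only guarantee that the iterates remain in $\mathscr{B}(\x^*,\rho)$ and converge to \emph{some} stationary point there, not necessarily to $\x^*$. Your spectral-gap argument---finiteness of critical values via the semi-algebraic Sard theorem, giving $\delta>0$ with no eigenvalue in $(\lambda_{\min},\lambda_{\min}+\delta]$, then combining the monotone decrease $f(\bar\x)\le f(\x_1)<\lambda_{\min}+\delta$ with the gap to force $f(\bar\x)=\lambda_{\min}$---closes this issue cleanly and is a genuine strengthening of the paper's reasoning. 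The semi-algebraic Sard claim is entirely legitimate here since $f$ is rational and the constrained critical set is semi-algebraic; you should treat it as a standard fact rather than an obstacle.
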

\begin{proof}
  Suppose that $\x^*$ is an eigenvector corresponding to the smallest eigenvalue
  and $\mathscr{B}(\x^*,\rho)\subseteq \mathscr{U}$ is a neighborhood
  as defined in Lemma \ref{Lem:KL-pre}.
  Since the function $\rho(\cdot)$ in \eqref{func-rho} is continuous and
  satisfies $\rho(\x^*)=0<\rho$, there exists a neighborhood
  $\mathscr{V}(\x^*) \equiv \{\x\in\SPHERE :\rho(\x)<\rho\}\subseteq \mathscr{U}$.
  That is to say, if an initial point $\x_1$ happens to be sampled from $\mathscr{V}(\x^*)$,
  the total sequence of iterates $\{\x_c\}$ converges to $\x^*$ by
  Lemma \ref{Lem:KL-pre} and Theorem \ref{Th:KL-converg}.
  Next, we consider the probability of this random event.

  Let $S$ and $A$ be hypervolumes of $(n-1)$ dimensional solids $\SPHERE$ and $\mathscr{V}(\x^*)$
  respectively.\footnote{The hypervolume of the $(n-1)$ dimensional unit sphere is
  $S = \frac{2\pi^{n/2}}{\Gamma(n/2)},$ where $\Gamma(\cdot)$ is
  the Gamma function.} (That is to say, the ``area'' of the surface of $\SPHERE$ in $\REAL^n$
  is $S$ and the ``area'' of the surface of
  $\mathscr{V}(\x^*)\subseteq\SPHERE$ in $\REAL^n$ is $A$. Hence, $A\leq S$.)
  Then, $S$ and $A$ are positive.
  By the geometric probability model,
  the probability of one randomly initial point $\x_1\in\mathscr{V}(\x^*)$ is
  \begin{equation*}
    \varsigma \equiv \frac{A}{S} >0.
  \end{equation*}
  In fact, once $\{\x_c\}\cap\mathscr{V}(\x^*)\neq\emptyset$,
  we could obtain the smallest eigenvalue.
  When starting from $N$ initial points generated by a uniform sample on $\SPHERE$,
  we obtain the probability as (\ref{Prob}).
\end{proof}

If we want to calculate the largest eigenvalue of a tensor $\Ten$,
we only need to replace the merit function $f(\x)$ in \eqref{Sph-Opt} with
\begin{equation*}
    \widehat{f}(\x) = -\frac{\Ten\x^k}{\Btn\x^k}.
\end{equation*}
All of the theorems for the largest eigenvalue of a tensor
could be proved in a similar way.

\section{Numerical experiments}

The novel CEST algorithm is implemented in Matlab and uses the following parameters
\begin{equation*}
    L = 5, \quad \eta = 0.01, \quad\text{and}\quad\beta=0.5.
\end{equation*}
We terminate CEST if
\begin{equation}\label{Tol-1}
    \|\g(\x_c)\|_{\infty} < 10^{-6}
\end{equation}
or
\begin{equation}\label{Tol-2}
    \|\x_{c+1}-\x_c\|_{\infty} < 10^{-8}
    \quad\text{ and }\quad
    \frac{|f(\x_{c+1})-f(\x_c)|}{1+|f(\x_c)|} < 10^{-16}.
\end{equation}
If the number of iterations reaches $5000$, we also stop.
All of the codes are written in Matlab 2012a and
run in a ThinkPad T450 laptop with Intel i7-5500U CPU and 8GB RAM.

We compare the following four algorithms in this section.
\begin{itemize}
  \item An adaptive shifted power method \cite{KoM-11,KoM-14} (Power M.).
    In Tensor Toolbox 2.6,\footnote{See \Path{http://www.sandia.gov/~tgkolda/TensorToolbox/index-2.6.html}.}
    it was implemented as \texttt{eig\_sshopm} and \texttt{eig\_geap}
    for Z- and H-eigenvalues of symmetric tensors respectively.
  \item Han's unconstrained optimization approach (Han's UOA) \cite{Han-13}.
    We solve the optimization model by \texttt{fminunc} in Matlab with settings:
    \texttt{GradObj:on, LargeScale:off, TolX:1.e-8, TolFun:1.e-16, MaxIter:5000, Display:off}.
    Since iterates generated by Han's UOA are not restricted on the unit sphere $\SPHERE$,
    the tolerance parameters are different from other algorithms.
  \item CESTde: we implement CEST for a dense symmetric tensor, i.e.,
    the skills addressed in Section \ref{CompIssue} are not applied.
  \item CEST: the novel method is proposed and analyzed in this paper.
\end{itemize}

For tensors arising from an even-uniform hypergraph, each algorithm starts from
one hundred random initial points sampled from a unit sphere $\SPHERE$ uniformly.
Then, we obtain one hundred estimated eigenvalues $\lambda_1,\ldots,\lambda_{100}$.
If the extreme eigenvalue $\lambda^*$ of that tensor is available,
we count the accuracy rate of this algorithm as
\begin{equation}\label{Truth}
    \mathrm{Accu.} \equiv \left|\left\{i \,:\, \frac{|\lambda_i-\lambda^*|}{1+|\lambda^*|}
      \leq 10^{-8}\right\}\right| \times 1\%.
\end{equation}
After using the global strategy in Section \ref{GloS},
we regard the best one as the estimated extreme eigenvalue.

\subsection{Small hypergraphs}

First, we investigate some extreme eigenvalues of symmetric tensors
corresponding to small uniform hypergraphs.

\smallskip\indent\textbf{Squid.}
A squid $G_S^k=(V,E)$ is a $k$-uniform hypergraph which has $(k^2-k+1)$ vertices
and $k$ edges: legs $\{i_{1,1},\ldots,i_{1,k}\}, \ldots, \{i_{k-1,1},\ldots,i_{k-1,k}\}$
and a head $\{i_{1,1},\ldots,i_{k-1,1},i_k\}$.
When $k$ is even, $G_S^k$ is obviously connected and odd-bipartite.
Hence, we have $\lambda_{\min}^H(\Adj(G_S^k)) = -\lambda_{\max}^H(\Adj(G_S^k))$
because of Theorem \ref{Th:Spectrum}(iv).
Since the adjacency tensor $\Adj(G_S^k)$ is nonnegative and weakly irreducible,
its largest H-eigenvalue $\lambda_{\max}^H(\Adj(G_S^k))$ could be computed by
the Ng-Qi-Zhou algorithm \cite{NQZ-09}.
For the smallest H-eigenvalue of $\Adj(G_S^k)$,
we perform the following tests.

With regards to the parameter $L$ for L-BFGS, Nocedal suggested that
L-BFGS performs well when $3\leq L \leq7$.
Hence, we compare L-BFGS with $L=3,5,7$ and
the Barzilai-Borwein method ($L=0$).
The parameter $\gamma_c$ is chosen from
$\gamma_c^{BB1}$, $\gamma_c^{BB2}$, and $\gamma_c^{Dai}$ randomly.
For $k$-uniform squids with $k=4,6,8$, we compute the smallest
H-eigenvalues of their adjacency tensors.
The total CPU times for one hundred runs are illustrated in Figure \ref{Squid-D}.
Obviously, L-BFGS is about five times faster than the Barzilai-Borwein method.
Following Nocedal's setting\footnote{See \Path{http://users.iems.northwestern.edu/~nocedal/lbfgs.html}.},
we prefer to set $L=5$ in CEST.

\begin{figure}[!tb]
  \centering
  \includegraphics[width=.7\textwidth]{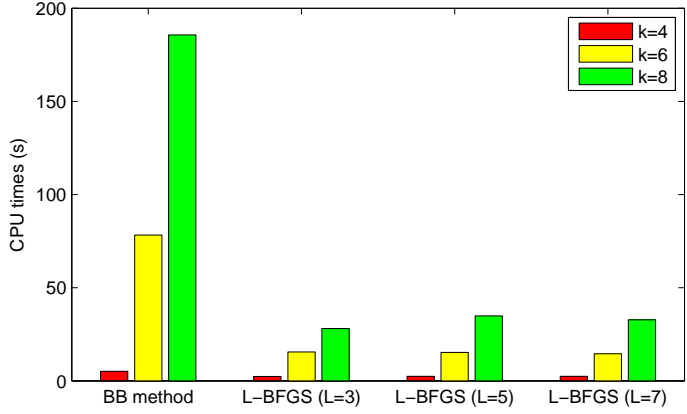}
  \caption{CPU times for L-BFGS with different parameters $L$.}\label{Squid-D}
\end{figure}

\begin{figure}[!tb]
  \centering
  \includegraphics[width=.4\textwidth]{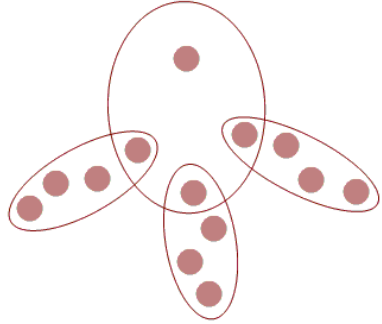}
  \caption{A $4$-uniform squid $G_S^4$.}\label{Squid-A}
\end{figure}

\begin{table}[!tb]
  \caption{Comparison results for the smallest H-eigenvalue of
    the adjacency tensor $\Adj(G_S^4)$.}\label{Squid-B}
\begin{center}
\begin{tabular}{l|c|c|c}
  \hline
  Algorithms  & $\lambda^H_{\min}(\Adj(G_S^4))$ & Time (s) & Accu. \\
  \hline
  Power M.  & $-1.3320$ & 97.20 & 100\% \\
  Han's UOA & $-1.3320$ & 21.20 & 100\% \\
  CESTde    & $-1.3320$ & 35.72 & 100\% \\
  CEST      & $-1.3320$ & \hspace{1ex}2.43 & 100\% \\
  \hline
\end{tabular}
\end{center}
\end{table}

Next, we consider the $4$-uniform squid $G_S^4$ illustrated in Figure \ref{Squid-A}.
For reference, we remind $\lambda_{\max}^H(\Adj(G_S^k))=1.3320$
by the Ng-Qi-Zhou algorithm. Then, we compare
four kinds of algorithms: Power M., Han's UOA, CESTde, and CEST.
Results are shown in Table \ref{Squid-B}.
Obviously, all algorithms find the smallest H-eigenvalue of
the adjacency tensor $\Adj(G_S^4)$ with probability $1$.
Compared with Power M., Han's UOA and CESTde save $78\%$ and $63\%$
CPU times, respectively.
When the sparse structure of the adjacency tensor $\Adj(G_S^4)$ is explored,
CEST is forty times faster than the power method.

%

\smallskip\indent
\textbf{Blowing up the Petersen graph.}
Figure \ref{FpStar-A} illustrates an ordinary graph $G_P$: the Petersen graph.
It is non-bipartite and the smallest eigenvalue of its signless Laplacian matrix is one.
We consider the $2k$-uniform hypergraph $G_P^{2k,k}$ which is generated by blowing up
each vertex of $G_P$ to a $k$-set. Hence, $G_P^{2k,k}$ contains $10k$ vertices
and $15$ edges.
From Theorem \ref{PowerG}, we know that the smallest H-eigenvalue of
the signless Laplacian tensor $\sLp(G_P^{2k,k})$ is exactly one.

\begin{figure}[!tb]
  \centering
  \includegraphics[width=.4\textwidth]{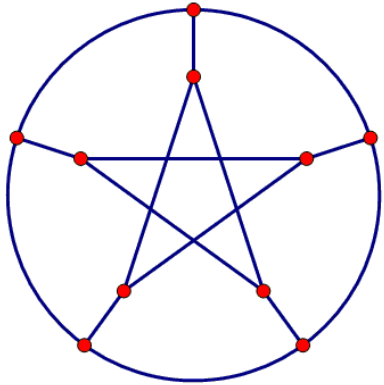}\\
  \caption{The Petersen graph $G_P$.}\label{FpStar-A}
\end{figure}

\begin{table}[!tb]
  \caption{Comparison results for the smallest H-eigenvalue of
    the signless Laplacian tensor $\sLp(G_P^{4,2})$.
    (*) means a failure.}\label{FpStar-B}
\begin{center}
\begin{tabular}{l|c|c|c}
  \hline
  Algorithms  & $\lambda^H_{\min}(\sLp(G_P^{4,2}))$ & CPU time(s) & Accu. \\
  \hline
  Power M.  & 1.0000 & 657.44 &  95\% \\
  Han's UOA & 1.1877$^{(*)}$ &  \hspace{1ex}93.09 &       \\
  CESTde    & 1.0000 & \hspace{1ex}70.43 & 100\% \\
  CEST      & 1.0000 & \hspace{2ex}3.82 & 100\% \\
  \hline
\end{tabular}
\end{center}
\end{table}

Table \ref{FpStar-B} reports comparison results of four sorts of algorithms
for the $4$-uniform hypergraph $G_P^{4,2}$.
Here, Han's UOA missed the smallest H-eigenvalue of the signless Laplacian tensor.
Power M., CESTde, and CEST find the true solution with a high probability.
When compared with power M., CESTde saves more than $88\%$ CPU times.
Moreover, the approach exploiting the sparsity
improves CESTde greatly, since CEST saves about $99\%$ CPU times.

\begin{table}[!tb]
  \caption{CEST computes the smallest H-eigenvalues of
    signless Laplacian tensors $\sLp(G_P^{2k,k})$.}\label{FpStar-C}
\begin{center}
\begin{tabular}{c|cccccccccc}
  \hline
  $2k$           &  2  &  4  &  6  &  8  &  10 &  12 &  14 &  16 &  18 &  20 \\
  \hline
  Accu. (\%) & 100 & 100 & 100 & 100 &  99 &  98 &  86 &  57 &  20 &   4 \\
  \hline
\end{tabular}
\end{center}
\end{table}

For $2k$-uniform hypergraph $G_P^{2k,k}$ with $k=1,\ldots,10$,
we apply CEST for computing the smallest H-eigenvalues of
their signless Laplacian tensors. Detailed results are shown
in Table \ref{FpStar-C}. For each case, CEST finds the
smallest H-eigenvalue of the signless Laplacian tensor in at most one minute.
With the increment of $k$, the percentage of accurate estimations
decreases.

\smallskip\indent
\textbf{Grid hypergraphs.}
The grid $G_G^s$ is a $4$-uniform hypergraph generated by subdividing a square.
If the subdividing order $s=0$, the grid $G_G^0$ is the square
with $4$ vertices and only one edge.
When the subdividing order $s\geq1$, we subdivide each edge of $G_G^{s-1}$ into four edges.
Hence, a grid $G_G^s$ has $(2^s+1)^2$ vertices and $4^s$ edges.
The $4$-uniform grid $G_G^s$ with $s=1,2,3,4$ are illustrated in Figure \ref{Matts-A}.

\begin{figure}[!tb]
\begin{center}
\begin{tabular}{cccc}
  \includegraphics[width=.12\textwidth]{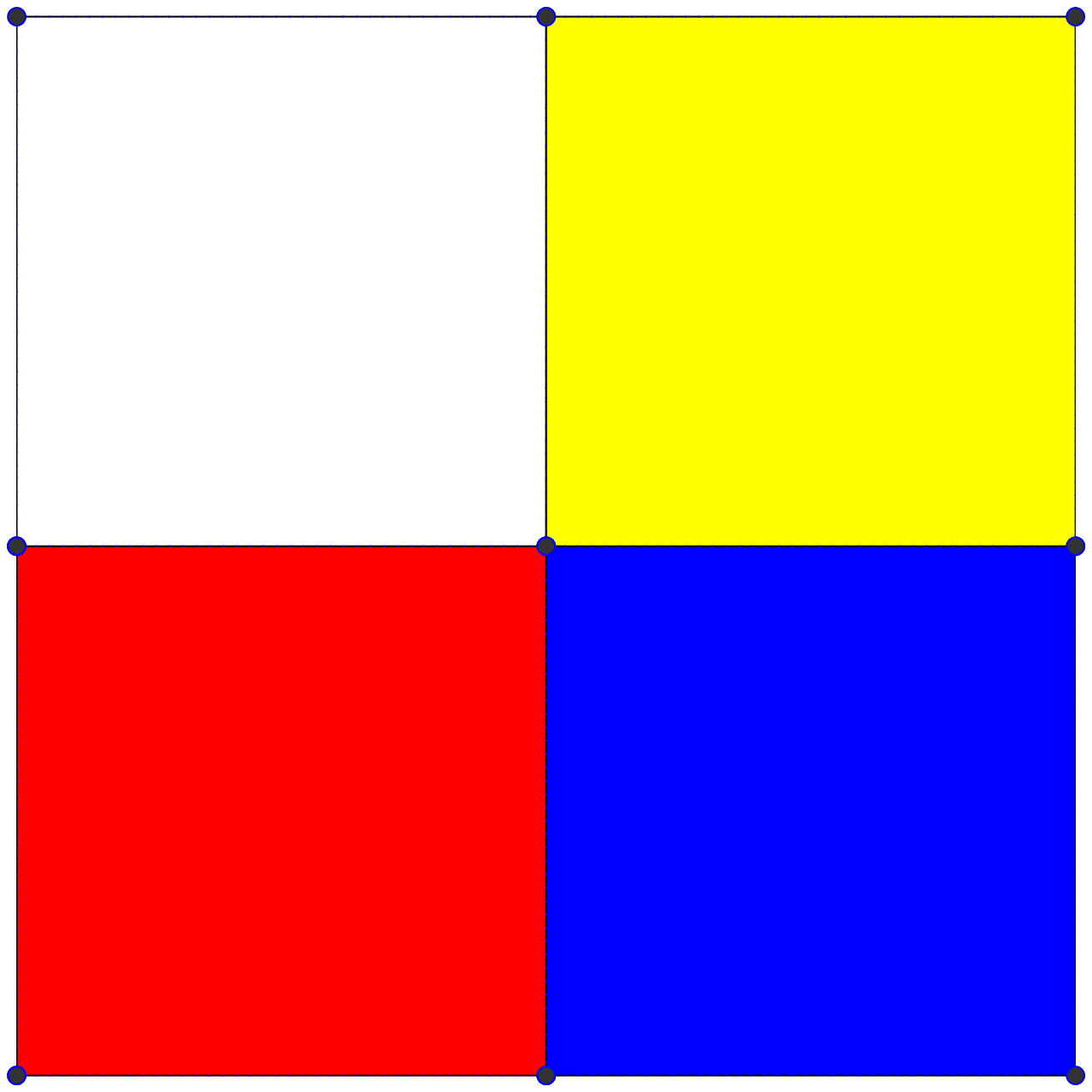} &
    \includegraphics[width=.19\textwidth]{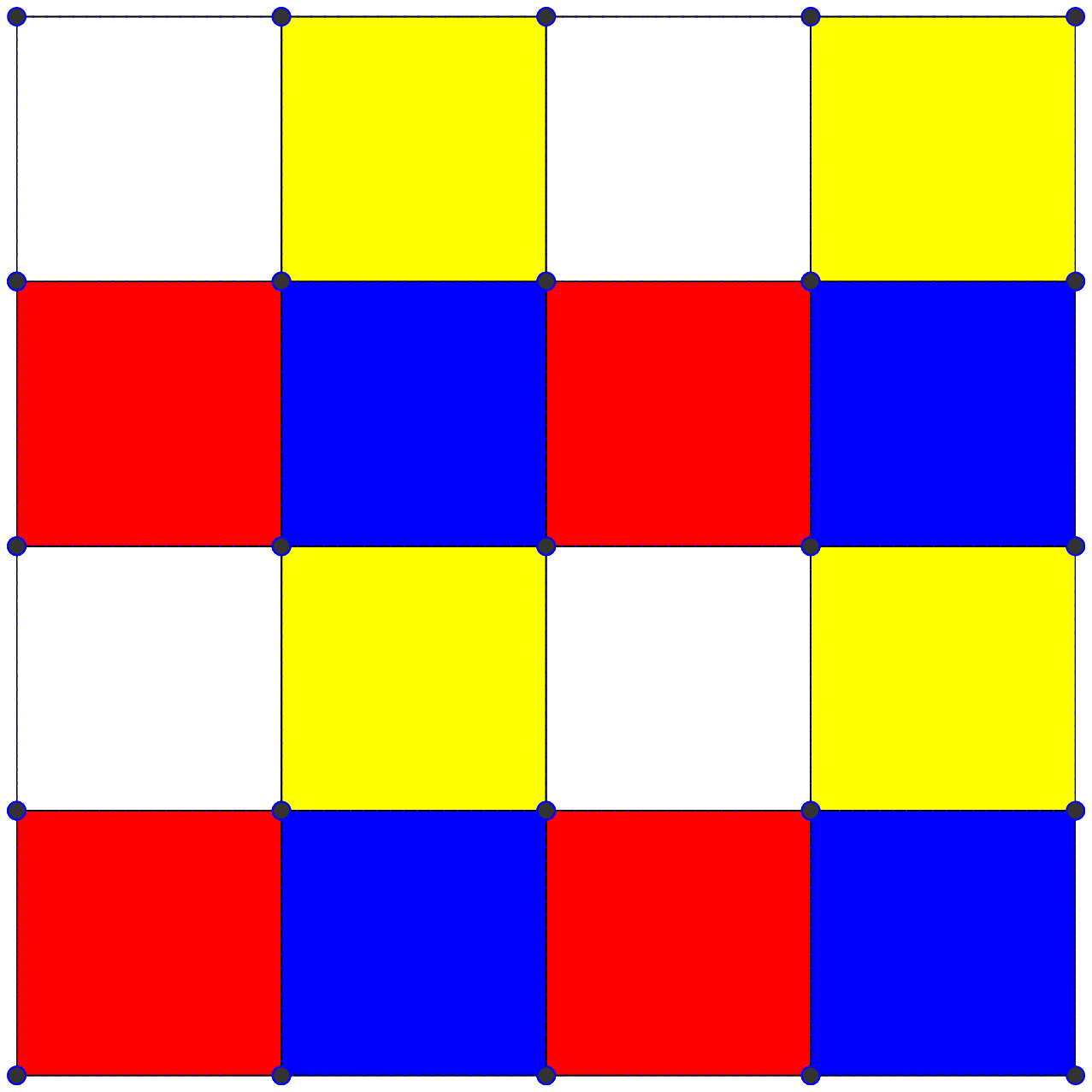} &
      \includegraphics[width=.26\textwidth]{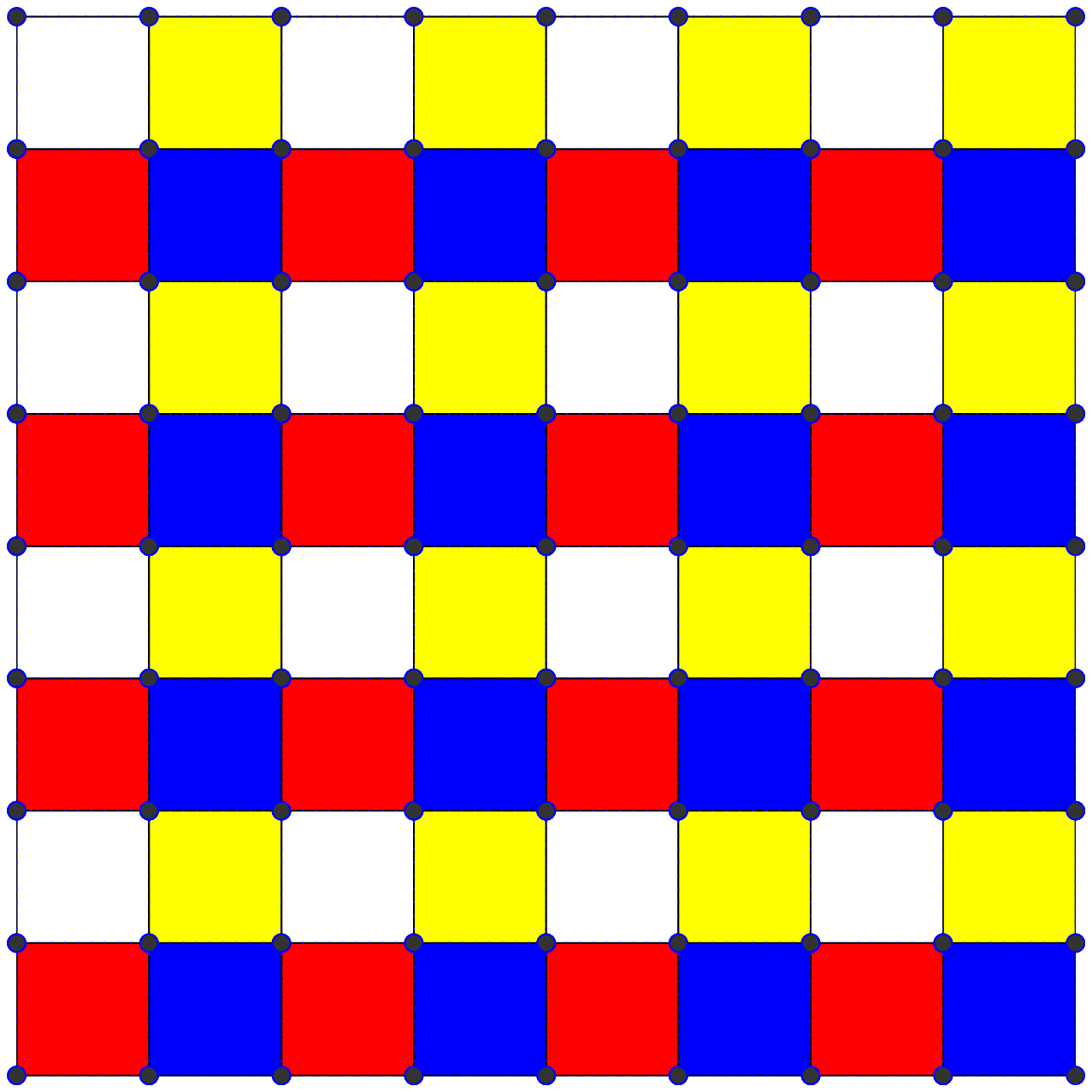} &
        \includegraphics[width=.33\textwidth]{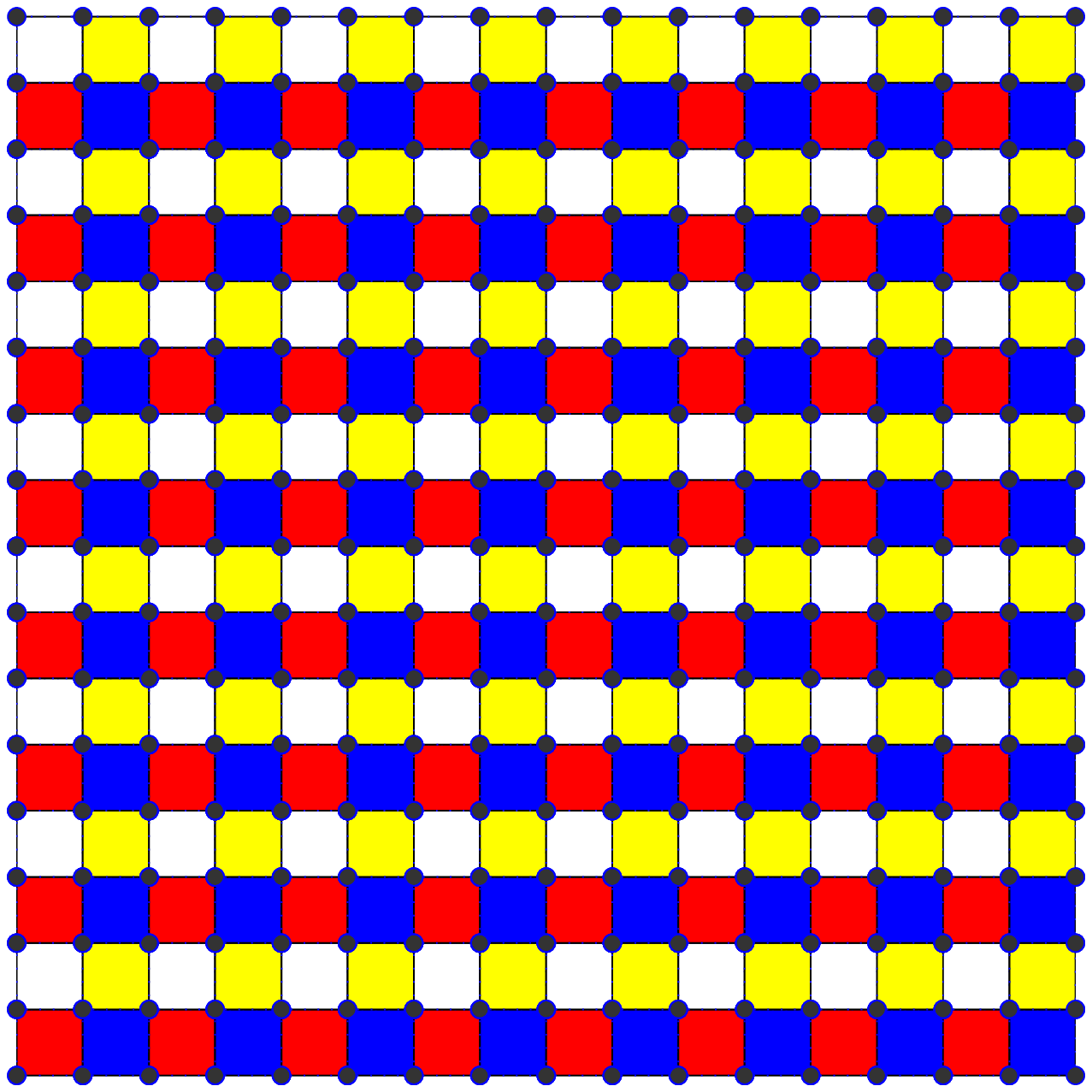} \\
  (a) $s=1$ & (b) $s=2$ &
  (c) $s=3$ & (d) $s=4$ \\
\end{tabular}
\end{center}
  \caption{Some $4$-uniform grid hypergraphs.}\label{Matts-A}
\end{figure}

\begin{table}[!tb]
  \caption{Comparison results for the largest H-eigenvalue of
    the Laplacian tensor $\Lap(G_G^2)$.}\label{Matts-B}
\begin{center}
\begin{tabular}{l|c|r|c}
  \hline
  Algorithms  & $\lambda^H_{\max}(\Lap(G_G^2))$ & Time(s) & Accu. \\
  \hline
  Power M.  & $6.5754$ & 142.51 & 100\% \\
  Han's UOA & $6.5754$ &  35.07 & 100\% \\
  CESTde   & $6.5754$ &  43.35 & 100\% \\
  CEST    & $6.5754$ &   2.43 & 100\% \\
  \hline
\end{tabular}
\end{center}
\end{table}

\begin{table}[!tb]
  \caption{CEST computes the largest H-eigenvalue of
    Laplacian tensors $\Lap(G_G^s)$.}\label{Matts-C}
\begin{center}
\begin{tabular}{crr|c|rr|r}
  \hline
   $s$ &  $n$ &  $m$ & $\lambda^H_{\max}(\Lap(G_G^s))$ & Iter. & Time(s) & Accu. \\
  \hline
    1  &    9 &    4 & $4.6344$ &   2444 &   1.39 & 100\% \\
    2  &   25 &   16 & $6.5754$ &   4738 &   2.43 & 100\% \\
    3  &   81 &   64 & $7.5293$ &  12624 &   6.44 &  98\% \\
    4  &  289 &  256 & $7.8648$ &  34558 &  26.08 &  65\% \\
  \hline
\end{tabular}
\end{center}
\end{table}

We study the largest H-eigenvalue of the Laplacian tensor of a $4$-uniform grid $G_G^2$
as shown in Figure \ref{Matts-A}(b).
Obviously, the grid $G_G^2$ is connected and odd-bipartite.
Hence, we have $\lambda_{\max}^H(\Lap(G_G^2))  = \lambda_{\max}^H(\sLp(G_G^2))$
by Theorem \ref{Th:Spectrum}(ii).
Using the Ng-Qi-Zhou algorithm, we calculate $\lambda_{\max}^H(\sLp(G_G^2))=6.5754$ for reference.
Table \ref{Matts-B} shows the performance of four kinds of algorithms:
Power M., Han's UOA, CESTde, and CEST.
All of them find the largest H-eigenvalue of $\Lap(G_G^2)$ with probability one.
Compared with Power M., Han's UOA and CESTde saves about
$75\%$ and $70\%$ CPU times respectively.
CEST is about fifty times faster than the Power Method.

In Table \ref{Matts-C}, we show the performance of CEST for
computing the largest H-eigenvalues of
the Laplacian tensors of grids illustrated in Figure \ref{Matts-A}.

\subsection{Large hypergraphs}

Finally, we consider two large scale even-uniform hypergraphs.

\smallskip\indent\textbf{Sunflower.}
A $k$-uniform sunflower $G_{S}=(V,E)$ with a maximum degree $\Delta$ has
$n=(k-1)\Delta+1$ vertices and $\Delta$ edges, where
$V=V_0 \cup V_1 \cup \cdots \cup V_{\Delta}$, $|V_0|=1$,
$|V_i|=k-1$ for $i=1,\ldots,\Delta$, and
$E=\{V_0 \cup V_i \,|\, i=1,\ldots,\Delta\}$.
Figure \ref{HyperGraph-1} is a $4$-uniform sunflower with $\Delta=3$.

Hu et al. \cite{HQX-15} argued in the following theorem that
the largest H-eigenvalue of the Laplacian tensor of
an even-uniform sunflower has a closed form solution.

\begin{Theorem}\label{Th:SunFlower}(Theorems 3.2 and 3.4 of \cite{HQX-15})
  Let $G$ be a $k$-graph with $k\geq4$ being even.
  Then $$\lambda_{\max}^H(\Lap) \geq \lambda_H^*,$$
  where $\lambda_H^*$ is the unique real root of the equation
  $(1-\lambda)^{k-1}(\lambda-\Delta)+\Delta=0$ in the interval $(\Delta,\Delta+1)$.
  The equality holds if and only if $G$ is a sunflower.
\end{Theorem}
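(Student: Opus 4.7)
The plan is to combine the variational characterization of the largest H-eigenvalue of the even-order tensor $\Lap$,
\[
\lambda_{\max}^H(\Lap)
=\max_{\x\neq\vt{0}}\frac{\Lap\x^k}{\sum_{i=1}^n x_i^k},
\]
with the hyperedge-wise expansion
\[
\Lap\x^k
= \sum_{e\in E}\Big(\sum_{i\in e}x_i^k - k\prod_{i\in e}x_i\Big),
\]
which exposes how each hyperedge contributes separately to the Rayleigh quotient.

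First I would dispose of the sunflower case directly. Let $v^*$ be the center and use the ansatz $x_{v^*}=a>0$ and $x_u=-c<0$ on every petal vertex. The petal equation $(\Lap\x^{k-1})_u=\lambda x_u^{k-1}$ reduces to $a=(\lambda-1)c$ because $k-1$ is odd, while the center equation reduces to $\Delta c^{k-1}=(\lambda-\Delta)a^{k-1}$; eliminating $a/c$ yields $(\lambda-1)^{k-1}(\lambda-\Delta)=\Delta$, equivalently $(1-\lambda)^{k-1}(\lambda-\Delta)+\Delta=0$, so $\lambda_H^*$ is an H-eigenvalue of a sunflower. Since the sunflower is connected and odd-bipartite with $U=\{v^*\}$, Theorem~\ref{Th:Spectrum}(ii) combined with Perron theory for the nonnegative weakly irreducible tensor $\sLp$ identifies this eigenvalue as $\lambda_{\max}^H(\Lap)$ for the sunflower.

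For general $G$ with $d(v^*)=\Delta$, I extend the same ansatz by $x_u=0$ outside the closed neighborhood of $v^*$ and evaluate the edge-wise identity. Each of the $\Delta$ edges incident to $v^*$ contributes $a^k+(k-1)c^k+k\,a\,c^{k-1}$, exactly as in the sunflower; any edge lying entirely inside the petal set $W$ contributes $kc^k-k(-c)^k=0$ since $k$ is even; any edge straddling $W$ and the zero-valued vertices contributes a strictly positive multiple of $c^k$; all remaining edges contribute $0$. Simultaneously $\sum_i x_i^k=a^k+|W|c^k$ with $|W|\leq\Delta(k-1)$, with equality if and only if the $\Delta$ edges at $v^*$ pairwise intersect only at $v^*$. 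Optimizing $(a,c)>0$ therefore produces a Rayleigh quotient at least as large as the sunflower value $\lambda_H^*$, establishing the lower bound, with a strict inequality whenever there is vertex overlap at $v^*$ or a straddling edge.

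The main obstacle is the residual equality case: a non-sunflower $G$ whose $\Delta$ edges at $v^*$ meet only at $v^*$, so every extra edge $e^*$ must sit entirely inside $W$ and, by AM-GM equality under the constant petal value, contributes exactly zero to the chosen test vector. To force $\lambda_{\max}^H(\Lap)>\lambda_H^*$ in this case, the plan is a second-order perturbation. Write $\Lap(G)\x^k$ as the sunflower contribution plus $\sum_{e^*\subseteq W}(\sum_{i\in e^*}x_i^k-k\prod_{i\in e^*}x_i)$, where each extra term is nonnegative and vanishes precisely when $x_i$ is constant on $e^*$. Perturb the ansatz by $\epsilon\vt{d}$ in a direction whose restriction to some $e^*$ has zero mean but nonconstant entries; the sunflower Rayleigh quotient drops by $-\alpha\epsilon^2$ (because the ansatz is the sunflower's maximizer) while the extra AM-GM term rises by $+\beta\epsilon^2$. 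One then shows $\beta>\alpha$ along a suitable $\vt{d}$, which delivers the required strict improvement and closes the equality clause.
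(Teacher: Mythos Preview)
The paper does not give its own proof of this statement: it is quoted verbatim as ``Theorems 3.2 and 3.4 of \cite{HQX-15}'' and used only as a reference value for the numerical experiments. So there is no proof in the paper to compare against.

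On the merits of your proposal: the lower bound and the sunflower computation are sound. Your test vector $\x$ with $x_{v^*}=a$, $x_u=-c$ on $W$, and $0$ elsewhere, together with the edge-wise decomposition of $\Lap\x^k$, cleanly gives $\lambda_{\max}^H(\Lap)\ge\lambda_H^*$, and the Perron argument via Theorem~\ref{Th:Spectrum}(ii) correctly identifies $\lambda_H^*$ as the sunflower maximum.

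The gap is in the ``only if'' direction. Two issues. First, a logical slip: when the $\Delta$ edges at $v^*$ meet only at $v^*$ and there is no straddling edge, you conclude ``every extra edge $e^*$ must sit entirely inside $W$.'' That is false without connectedness; an extra edge can live entirely outside $\{v^*\}\cup W$, and then your test vector and your perturbation see nothing. Second, and more seriously, the heart of the argument---the inequality $\beta>\alpha$---is asserted, not proved. Writing it out, with $\vt{d}$ supported on $e^*\subseteq W$ and $\sum_{i\in e^*}d_i=0$, the extra AM--GM term contributes $\tfrac12 k^2 c^{k-2}\|\vt{d}\|^2\,\epsilon^2$, while the sunflower quotient drops by $\tfrac{k(k-1)}{2D(\x_0)}\bigl[\lambda_H^*\sum_i x_{0,i}^{k-2}d_i^2-\Lap_S\x_0^{k-2}\vt{d}^2\bigr]\epsilon^2$, a quantity depending on how the vertices of $e^*$ distribute among the $\Delta$ petals. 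Showing the gain dominates the loss for \emph{some} admissible $\vt{d}$ is exactly the hard step, and you have not supplied it; this is where the equality characterization really lives.
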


We aim to apply CEST for computing the largest H-eigenvalues of
Laplacian tensors of even-uniform sunflowers.
For $k=4$ and $6$, we consider sunflowers with maximum degrees from ten to one million.
Since we deal with large scale tensors, we slightly
enlarge tolerance parameters in \eqref{Tol-1} and \eqref{Tol-2} by multiplying $\sqrt{n}$.
To show the accuracy of the estimated H-eigenvalue $\lambda^H_{\max}(\Lap)$,
we calculate the relative error
$$\mathrm{RE} = \frac{|\lambda^H_{\max}(\Lap)-\lambda_H^*|}{\lambda_H^*},$$
where $\lambda_H^*$ is defined in Theorem \ref{Th:SunFlower}.
Table \ref{SunFl-C} reports detailed numerical results.
Obviously, the largest H-eigenvalues of Laplacian tensors
returned by CEST have a high accuracy.
Relative errors are in the magnitude $\mathcal{O}(10^{-10})$.
The CPU time costed by CEST does not exceed $80$ minutes.

\begin{table}
  \caption{The largest H-eigenvalues of Laplacian tensors
    corresponding to $k$-uniform sunflowers.}\label{SunFl-C}
\begin{center}
\begin{tabular}{cr|r@{.}l c|rr|r}
  \hline
  $k$ &  $n$ & \multicolumn{2}{c}{$\lambda^H_{\max}(\Lap)$} & RE & Iter. & Time(s) & Accu. \\
  \hline
  $4$ &        31 &        10 & 0137 & $5.3218\times10^{-16}$ & 4284 &    2.39 & 100\% \\
      &       301 &       100 & 0001 & $7.3186\times10^{-14}$ & 4413 &    3.73 &  42\% \\
      &     3,001 &     1,000 & 0000 & $1.2917\times10^{-10}$ & 1291 &    4.84 & 100\% \\
      &    30,001 &    10,000 & 0000 & $5.9652\times10^{-12}$ & 1280 &   38.14 & 100\% \\
      &   300,001 &   100,000 & 0000 & $9.6043\times10^{-15}$ & 1254 &  512.04 & 100\% \\
      & 3,000,001 & 1,000,000 & 0000 &                    $0$ & 1054 & 4612.28 & 100\% \\
  \hline
  $6$ &        51 &        10 & 0002 & $2.4831\times10^{-12}$ & 4768 &    3.34 &   8\% \\
      &       501 &       100 & 0000 & $2.4076\times10^{-10}$ & 1109 &    1.47 &  98\% \\
      &     5,001 &     1,000 & 0000 & $3.2185\times10^{-13}$ & 1020 &    5.85 & 100\% \\
      &    50,001 &    10,000 & 0000 & $5.7667\times10^{-12}$ &  927 &   44.62 & 100\% \\
      &   500,001 &   100,000 & 0000 & $1.1583\times10^{-13}$ &  778 &  479.52 & 100\% \\
      & 5,000,001 & 1,000,000 & 0000 & $2.3283\times10^{-16}$ &  709 & 4679.30 & 100\% \\
  \hline
\end{tabular}
\end{center}
\end{table}

\smallskip\indent\textbf{Icosahedron.}
An icosahedron has twelve vertices and twenty faces.
The subdivision of an icosahedron could be used to approximate a unit sphere.
The $s$-order subdivision of an icosahedron has $(20\times 4^s)$ faces
and each face is a triangle. We regard three vertices of the triangle as well as its center
as an edge of a $4$-graph $G_I^s$.
Then, the $4$-graph $G_I^s$ must be connected and odd-bipartite. See Figure \ref{Icos-A}.

\begin{figure}[!tb]
\begin{center}
\begin{tabular}{ccc}
  \raisebox{5pt}{\includegraphics[width=.28\textwidth]{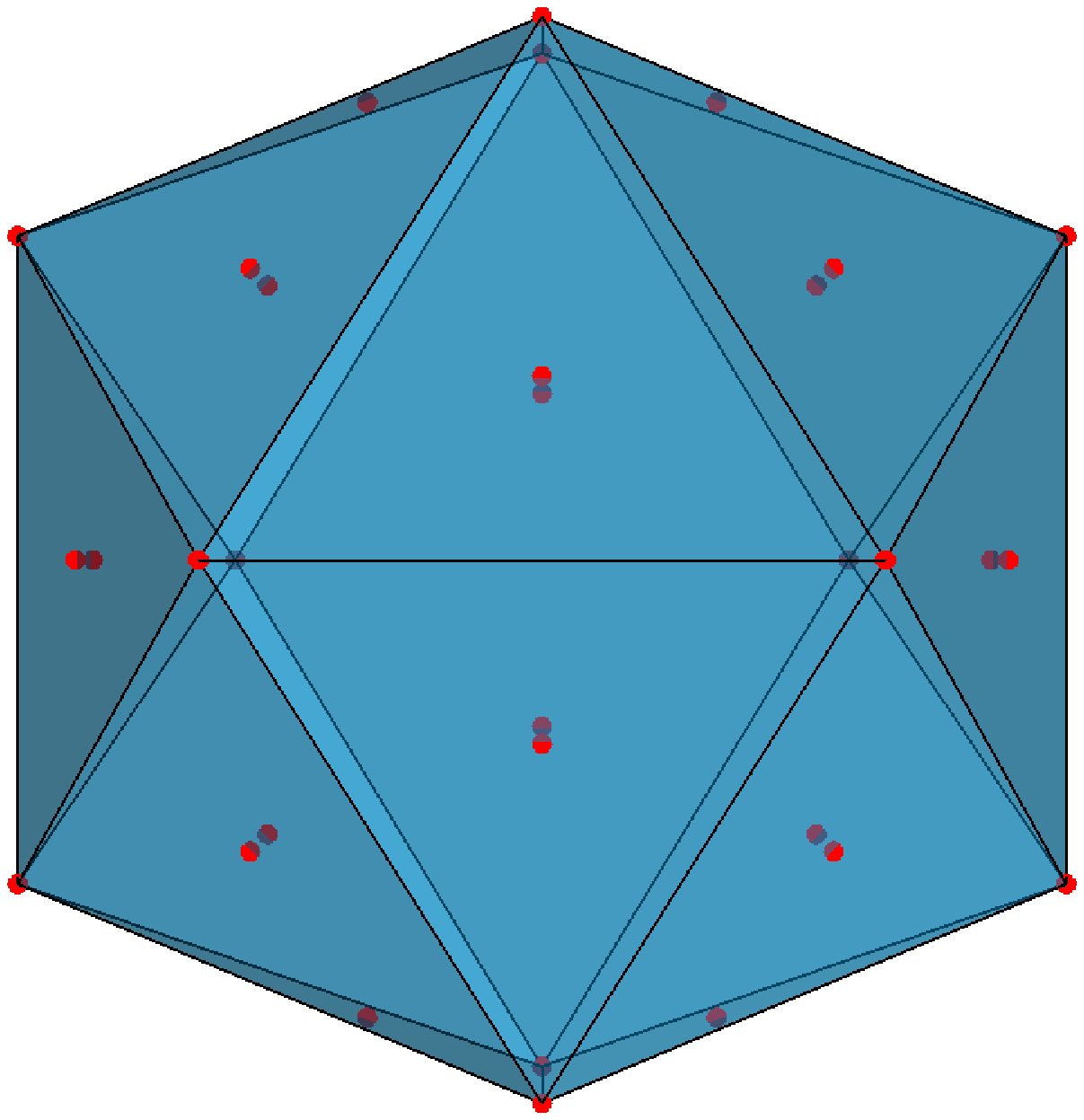}} &
    \includegraphics[width=.3\textwidth]{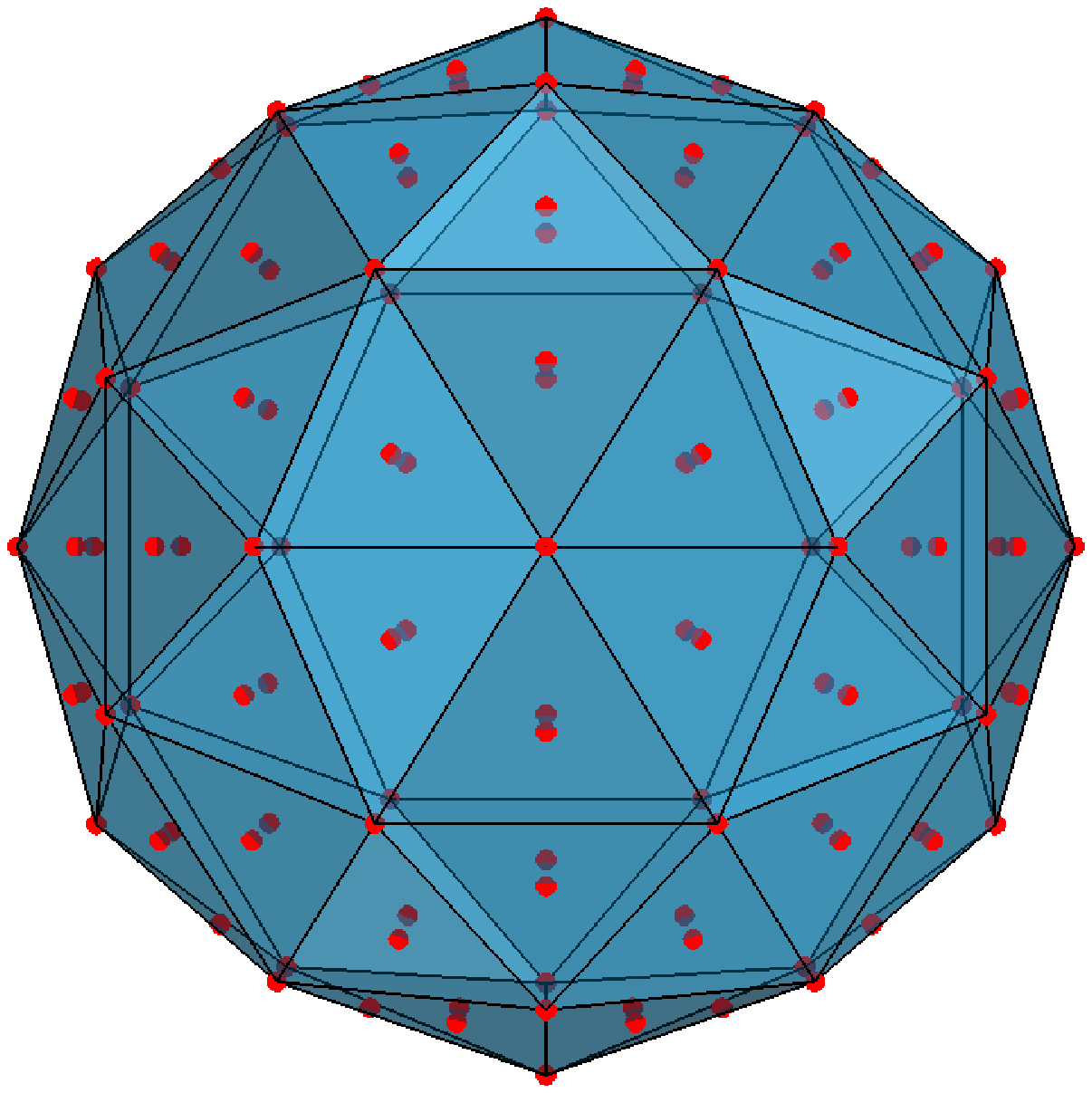} &
      \includegraphics[width=.3\textwidth]{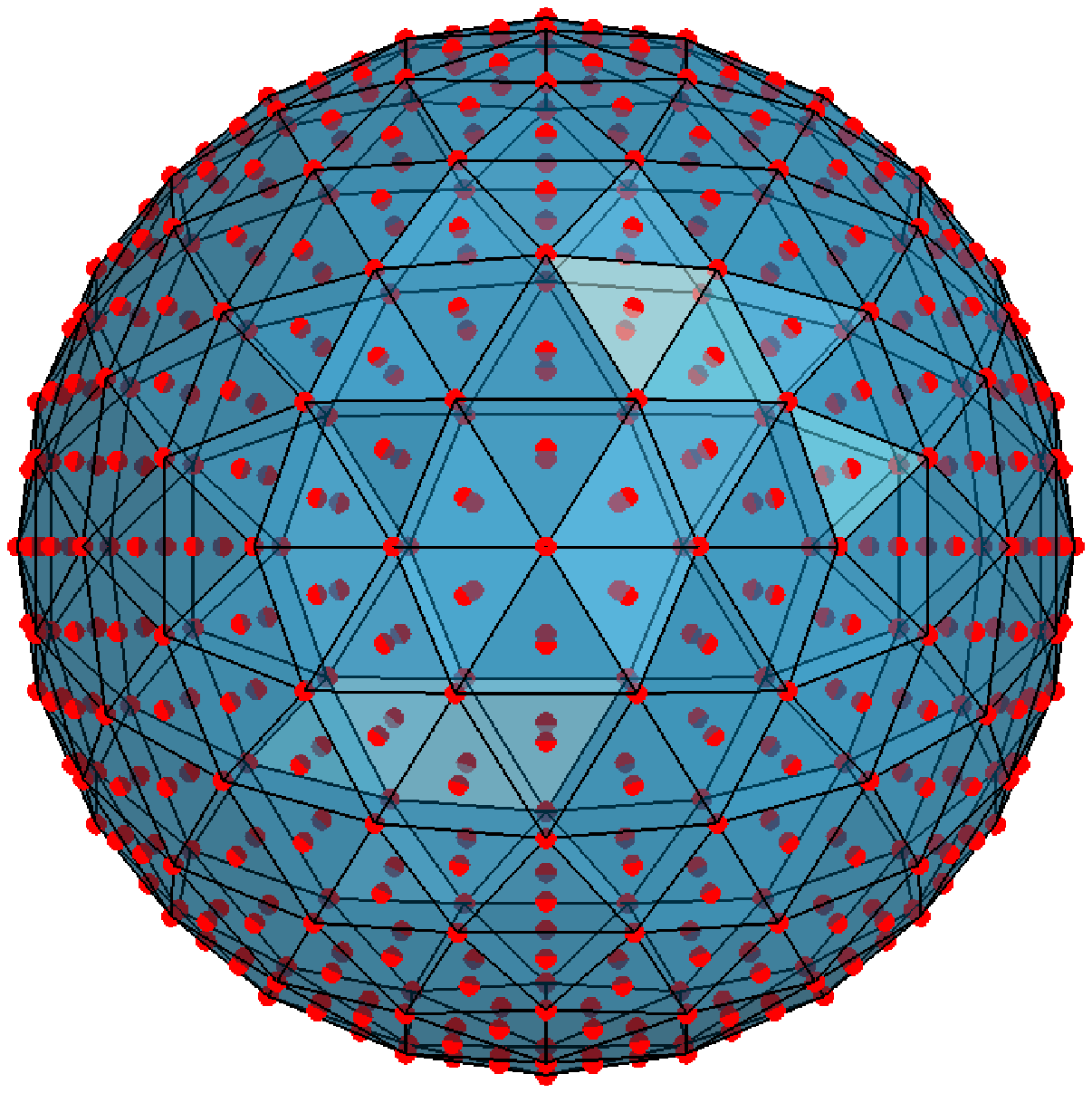} \\
  Icosahedron ($s=0$) & $s=1$ & $s=2$ \\
  & \includegraphics[width=.3\textwidth]{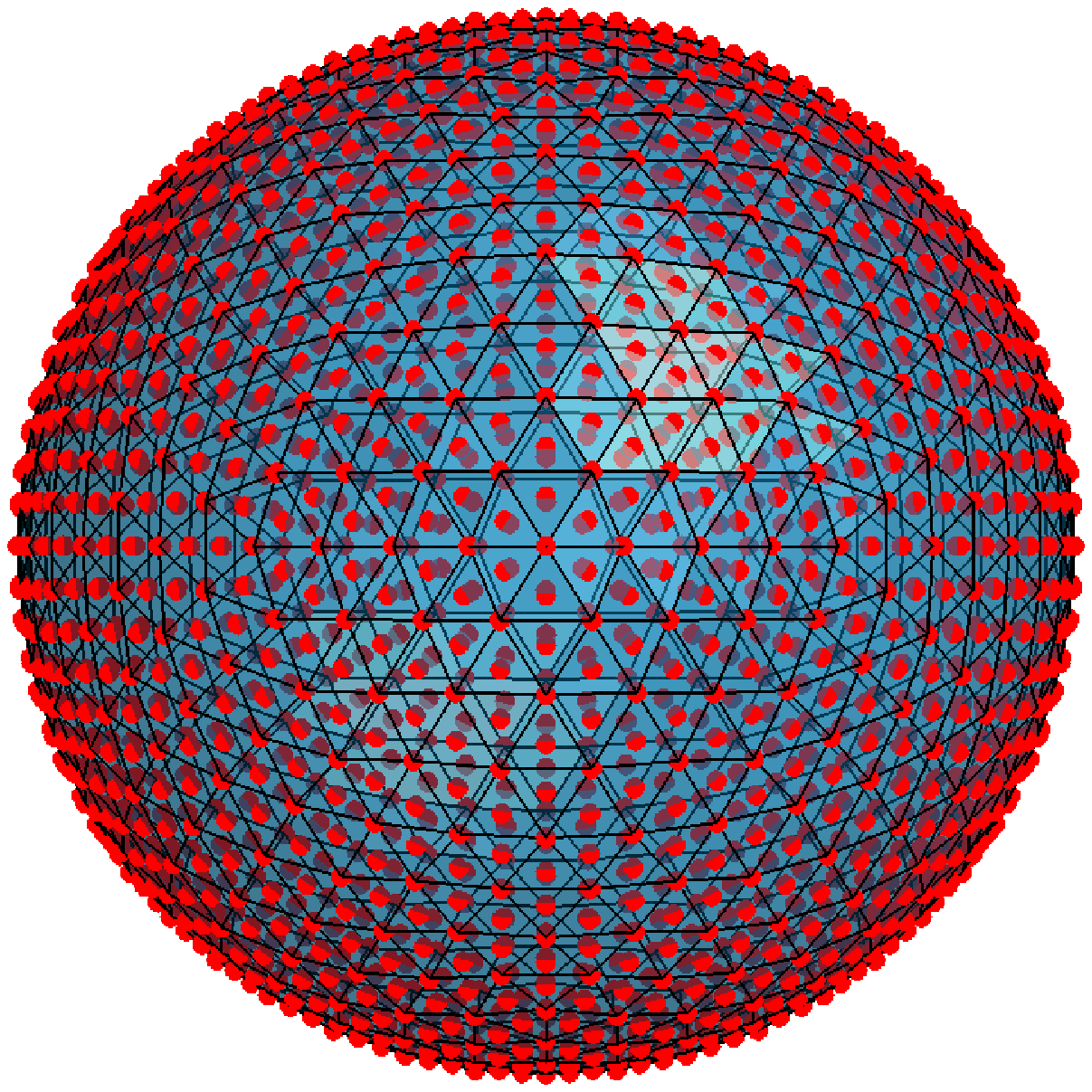} &
    \includegraphics[width=.3\textwidth]{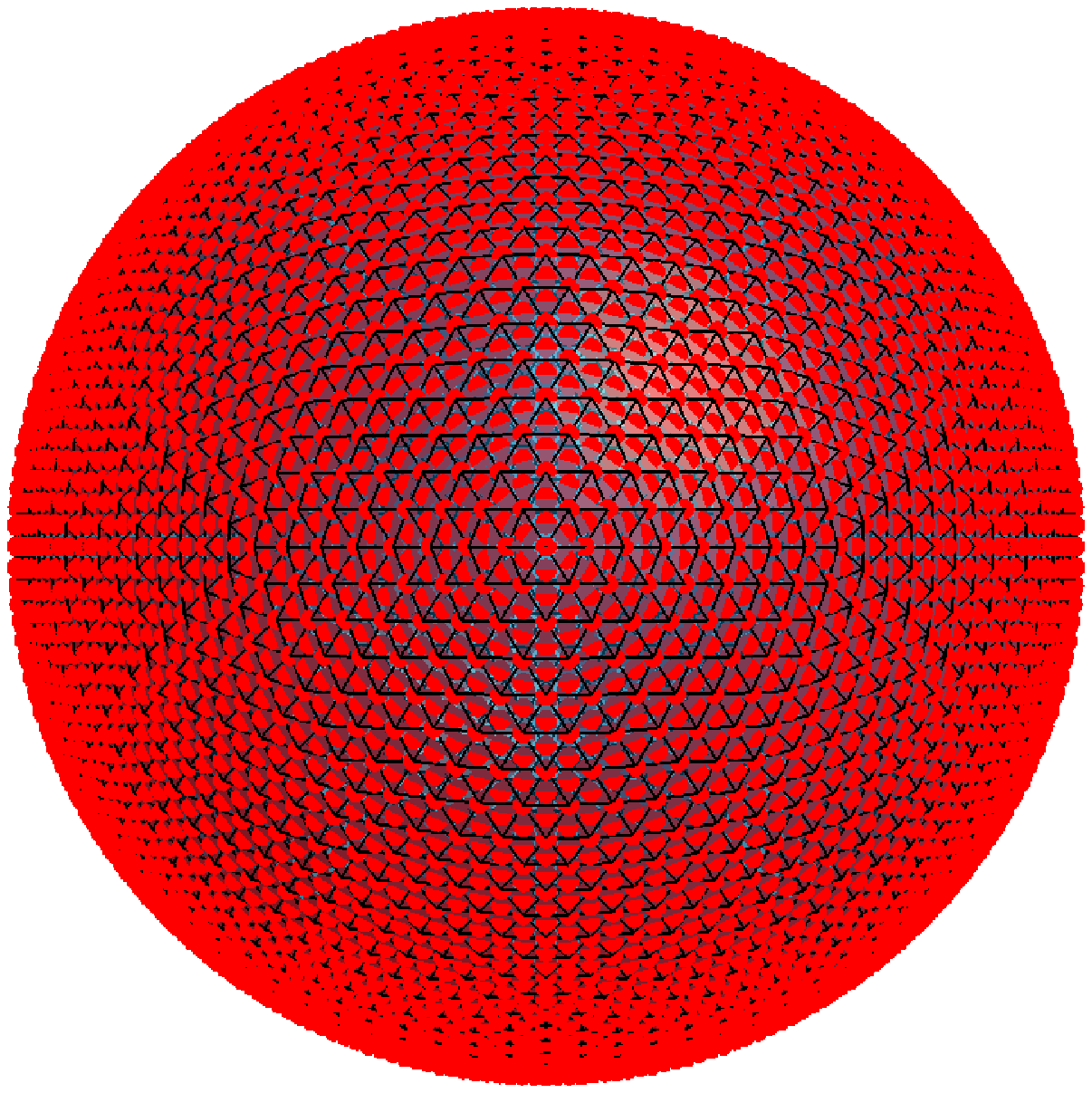} \\
        & $s=3$ & $s=4$ \\
\end{tabular}
\end{center}
  \caption{$4$-uniform hypergraphs: subdivision of an icosahedron.}\label{Icos-A}
\end{figure}

\begin{table}[!tbh]
  \caption{CEST computes the largest Z-eigenvalues of
    Laplacian tensors $\Lap(G_I^s)$ and signless Laplacian tensors $\sLp(G_I^s)$.}\label{Icos-C}
\begin{center}
\begin{tabular}{rrr|ccr|ccr}
  \hline
      $s$ & $n$ & $m$ & $\lambda_{\max}^Z(\Lap(G_I^s))$ & Iter. & time(s) &  $\lambda_{\max}^Z(\sLp(G_I^s))$ & Iter. & time(s)  \\
    \hline
       0 &      32 &        20 & 5 & 1102 &   0.89 &  5 & 1092 &   0.75  \\
       1 &     122 &        80 & 6 & 1090 &   1.09 &  6 & 1050 &   0.75  \\
       2 &     482 &       320 & 6 & 1130 &   1.39 &  6 & 1170 &   1.23  \\
       3 &   1,922 &     1,280 & 6 & 1226 &   3.15 &  6 & 1194 &   2.95  \\
       4 &   7,682 &     5,120 & 6 & 1270 &  10.11 &  6 & 1244 &  10.06  \\
       5 &  30,722 &    20,480 & 6 & 1249 &  36.89 &  6 & 1282 &  35.93  \\
       6 & 122,882 &    81,920 & 6 & 1273 & 166.05 &  6 & 1289 & 161.02  \\
       7 & 491,522 &   327,680 & 6 & 1300 & 744.08 &  6 & 1327 & 739.01  \\
       8 &1,966,082& 1,310,720 & 6 &  574 &1251.36 &  6 &  558 &1225.87  \\
  \hline
\end{tabular}
\end{center}
\end{table}

According to Theorem \ref{Th:Spectrum}(v),
we have $\lambda_{\max}^Z(\Lap(G_I^s)) = \lambda_{\max}^Z(\sLp(G_I^s))$,
although they are unknown.
Experiment results are reported in Table \ref{Icos-C}.
It is easy to see that CEST could compute the largest Z-eigenvalues of both
Laplacian tensors and signless Laplacian tensors of
hypergraphs $G_I^s$ with dimensions up to almost two millions.
In each case of our experiment, CEST costs at most twenty-one minutes.

Additionally, for $4$-graphs $G_I^s$ generated by subdividing an icosahedron,
the following equality seems to hold
\begin{equation}\label{Z-eq}
    \lambda_{\max}^Z(\Lap(G_I^s)) = \lambda_{\max}^Z(\sLp(G_I^s)) = \Delta.
\end{equation}
Bu et al. \cite{BFZ-15} proved that \eqref{Z-eq} holds
for a $k$-uniform sunflower with $3\leq k \leq 2\Delta$.
However, it is an open problem whether the equality \eqref{Z-eq} hold for
a general connected odd-bipartite uniform hypergraph.

%


\section{Conclusion}

Motivated by recent advances in spectral hypergraph theory,
we proposed an efficient first-order optimization algorithm CEST
for computing extreme H- and Z-eigenvalues of sparse tensors
arising form large scale uniform hypergraphs.
Due to the algebraic nature of tensors, we could apply
the Kurdyka-{\L}ojasiewicz property in analyzing the convergence
of the sequence of iterates generated by CEST. By using a simple global strategy,
we prove that the extreme eigenvalue of a symmetric tensor could be touched
with a high probability.

We establish a fast computational framework for products of a vector and
large scale sparse tensors arising from a uniform hypergraph. By using this technique,
the storage of a hypergraph is economic and
the computational cost of CEST in each iteration is cheap.
Numerical experiments show that the novel algorithm CEST could
deal with uniform hypergraphs with millions of vertices.

\appendix
\section*{Appendix}
In this appendix, we will prove that L-BFGS produces a gradient-related direction,
i.e., Theorem \ref{Th: grad-dir} is valid.
First, we consider the classical BFGS update \eqref{def-sy}--\eqref{BFGS-formula}
 and establish the following two lemmas.

\begin{Lemma}\label{Lem:bfgs-ub}
  Suppose that $H_c^+$ is generated by BFGS
  \eqref{def-sy}--\eqref{BFGS-formula}.
  Then, we have
  \begin{equation}\label{ubd-rec}
    \|H_c^+\| \leq \|H_c\|\left(1+\frac{4M}{\kappa_{\epsilon}}\right)^2
      +\frac{4}{\kappa_{\epsilon}}.
  \end{equation}
\end{Lemma}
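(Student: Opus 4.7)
The plan is to split the argument into two cases according to the definition of $\rho_c$ in \eqref{def-rho}, and then simply invoke submultiplicativity of the spectral norm together with the \emph{a priori} bounds collected in Lemma \ref{Lem:fg-bnd}. Since $\x_c,\x_{c+1}\in\SPHERE$, I would first observe the trivial bound $\|\s_c\| = \|\x_{c+1}-\x_c\| \leq 2$, and then use $\|\g(\x)\|\leq M$ from Lemma \ref{Lem:fg-bnd} to get $\|\y_c\| \leq \|\g(\x_{c+1})\|+\|\g(\x_c)\| \leq 2M$. These are the only ingredients I will need beyond the BFGS formula itself.

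In the first case, $\y_c^\T\s_c < \kappa_\epsilon$ gives $\rho_c=0$, so $V_c=I$ and the BFGS formula \eqref{BFGS-formula} collapses to $H_c^+=H_c$; the inequality \eqref{ubd-rec} then holds trivially because the right-hand side exceeds $\|H_c\|$. In the second case, $\rho_c = 1/(\y_c^\T\s_c) \leq 1/\kappa_\epsilon$, so
\begin{equation*}
  \|\rho_c \y_c \s_c^\T\| \;\leq\; \rho_c\,\|\y_c\|\,\|\s_c\| \;\leq\; \frac{(2M)(2)}{\kappa_\epsilon} \;=\; \frac{4M}{\kappa_\epsilon},
\end{equation*}
which yields $\|V_c\| \leq 1 + 4M/\kappa_\epsilon$ via the triangle inequality. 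Similarly $\|\rho_c \s_c\s_c^\T\| \leq \rho_c\|\s_c\|^2 \leq 4/\kappa_\epsilon$.

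I would then finish by applying submultiplicativity and the triangle inequality to \eqref{BFGS-formula}:
\begin{equation*}
  \|H_c^+\| \;\leq\; \|V_c^\T\|\,\|H_c\|\,\|V_c\| + \|\rho_c \s_c\s_c^\T\| \;\leq\; \left(1+\tfrac{4M}{\kappa_\epsilon}\right)^2 \|H_c\| + \tfrac{4}{\kappa_\epsilon},
\end{equation*}
which is exactly \eqref{ubd-rec}. There is no real obstacle here; the lemma is a routine bookkeeping estimate and the only subtlety is remembering that the compactness of $\SPHERE$ is what supplies the uniform bound $\|\s_c\|\leq 2$, while Lemma \ref{Lem:fg-bnd} supplies the uniform bound on $\|\y_c\|$. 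The safeguard $\rho_c=0$ when $\y_c^\T\s_c$ is too small is essential — without it the term $\rho_c$ could blow up and the recursion \eqref{ubd-rec} would fail; handling that case separately is the one place where a little care is needed.
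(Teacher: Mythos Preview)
Your proof is correct and follows essentially the same approach as the paper: the same two-case split on $\rho_c$, the same bounds $\|\s_c\|\leq 2$ and $\|\y_c\|\leq 2M$ (from $\x_c\in\SPHERE$ and Lemma~\ref{Lem:fg-bnd}), and the same submultiplicativity/triangle-inequality estimate on \eqref{BFGS-formula}.
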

\begin{proof}
  If $\y_c^\T\s_c<\kappa_{\epsilon}$, we get $\rho_c=0$ and $H_c^+=H_c$.
  Hence, the inequality \eqref{ubd-rec} holds.

  Next, we consider the case $\y_c^\T\s_c\geq\kappa_{\epsilon}$.
  Obviously, $\rho_c\leq\frac{1}{\kappa_{\epsilon}}$.
  From Lemma \ref{Lem:fg-bnd} and all iterates $\x_c\in\SPHERE$, we get
  \begin{equation}\label{skyk}
    \|\s_c\|\leq 2 \quad\text{ and }\quad \|\y_c\|\leq 2M.
  \end{equation}
  Since
  \begin{equation*}
    \|V_c\| \leq 1+\rho_c\|\y_c\|\|\s_c\| \leq 1+\frac{4M}{\kappa_{\epsilon}}
  \quad\text{ and }\quad
    \|\rho_c\s_c\s_c^\T\| \leq \rho_c\|\s_c\|^2 \leq \frac{4}{\kappa_{\epsilon}},
  \end{equation*}
  we have
  \begin{equation*}
    \|H_c^+\| \leq \|H_c\| \|V_c\|^2 + \|\rho_c\s_c\s_c^\T\|
    \leq \|H_c\|\left(1+\frac{4M}{\kappa_{\epsilon}}\right)^2
      +\frac{4}{\kappa_{\epsilon}}.
  \end{equation*}
  Hence, the inequality \eqref{ubd-rec} is valid.
\end{proof}

\begin{Lemma}\label{Lem:bfgs-lb}
  Suppose that $H_c$ is positive definite and $H_c^+$ is generated by BFGS
  \eqref{def-sy}--\eqref{BFGS-formula}.
  Let $\mu_{\min}(H)$ be the smallest eigenvalue of a symmetric matrix $H$.
  Then, we get $H_c^+$ is positive definite and
  \begin{equation}\label{min-eig}
    \mu_{\min}(H_c^+)\geq
      \frac{\kappa_{\epsilon}}{\kappa_{\epsilon}+4M^2\|H_c\|}\mu_{\min}(H_c).
  \end{equation}
\end{Lemma}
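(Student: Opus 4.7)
The plan is to split on whether the curvature condition triggers. If $\y_c^\T\s_c < \kappa_\epsilon$, then by \eqref{def-rho} we have $\rho_c = 0$, $V_c = I$, and $H_c^+ = H_c$; in this case $H_c^+$ is positive definite with $\mu_{\min}(H_c^+) = \mu_{\min}(H_c)$, and since the multiplier $\kappa_\epsilon/(\kappa_\epsilon + 4M^2\|H_c\|) \leq 1$, the bound \eqref{min-eig} is immediate. So the real work is the case $\y_c^\T\s_c \geq \kappa_\epsilon$, for which $\rho_c = 1/(\y_c^\T\s_c) > 0$.

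In the nontrivial case, the cleanest route is to pass to the inverse $B_c \equiv H_c^{-1}$, bound $\|B_c^+\|$ from above, and invert. A standard Sherman-Morrison-Woodbury manipulation of \eqref{BFGS-formula} gives the dual BFGS formula
\begin{equation*}
    B_c^+ = B_c - \frac{B_c\s_c\s_c^\T B_c}{\s_c^\T B_c\s_c} + \frac{\y_c\y_c^\T}{\y_c^\T\s_c},
\end{equation*}
which first of all preserves positive definiteness when $\y_c^\T\s_c > 0$ (and hence so does the H-form update, establishing that $H_c^+$ is positive definite). Since the middle term is negative semidefinite, we have the Loewner bound
\begin{equation*}
    B_c^+ \preceq B_c + \frac{\y_c\y_c^\T}{\y_c^\T\s_c}.
\end{equation*}
Taking operator norms and using $\|\y_c\| \leq 2M$ from \eqref{skyk} together with $\y_c^\T\s_c \geq \kappa_\epsilon$, we obtain
\begin{equation*}
    \|B_c^+\| \leq \|B_c\| + \frac{\|\y_c\|^2}{\y_c^\T\s_c}
      \leq \frac{1}{\mu_{\min}(H_c)} + \frac{4M^2}{\kappa_\epsilon}
      = \frac{\kappa_\epsilon + 4M^2\mu_{\min}(H_c)}{\kappa_\epsilon\,\mu_{\min}(H_c)}.
\end{equation*}

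Since $H_c^+$ is symmetric positive definite, $\mu_{\min}(H_c^+) = 1/\|B_c^+\|$, so inverting the last inequality yields
\begin{equation*}
    \mu_{\min}(H_c^+) \geq \frac{\kappa_\epsilon\,\mu_{\min}(H_c)}{\kappa_\epsilon + 4M^2\mu_{\min}(H_c)}.
\end{equation*}
Finally, we relax $\mu_{\min}(H_c) \leq \|H_c\|$ in the denominator to get exactly \eqref{min-eig}. The only subtle point is justifying the dual BFGS formula and the preservation of positive definiteness; both are classical and a short Sherman-Morrison-Woodbury computation on \eqref{BFGS-formula}, using $\rho_c\y_c^\T\s_c = 1$, suffices. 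I expect the main obstacle is not the analysis but the bookkeeping of constants, i.e., making sure the relaxation $\mu_{\min}(H_c) \leq \|H_c\|$ is invoked at the right moment so that the final bound matches the form stated in \eqref{min-eig} rather than the sharper but less convenient $\kappa_\epsilon + 4M^2 \mu_{\min}(H_c)$ denominator.
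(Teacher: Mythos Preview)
Your proof is correct but takes a genuinely different route from the paper's. The paper works directly on the $H$-side: for a unit vector $\z$ it writes
\[
  \z^\T H_c^+\z = (\z-\rho_c(\s_c^\T\z)\y_c)^\T H_c(\z-\rho_c(\s_c^\T\z)\y_c)+\rho_c(\s_c^\T\z)^2,
\]
views this as a convex quadratic in the scalar $t=\s_c^\T\z$, minimizes over $t$, and after a Cauchy--Schwarz step in the $H_c$-inner product obtains $\z^\T H_c^+\z \ge \z^\T H_c\z/(1+\rho_c\y_c^\T H_c\y_c)$, from which \eqref{min-eig} follows by bounding $\rho_c\|\y_c\|^2\le 4M^2/\kappa_\epsilon$. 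You instead pass to $B_c=H_c^{-1}$, invoke the dual BFGS formula, drop the negative semidefinite term to get a Loewner upper bound on $B_c^+$, and invert.

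Your argument is shorter and yields the sharper intermediate denominator $\kappa_\epsilon+4M^2\mu_{\min}(H_c)$ before relaxing to $\|H_c\|$; the price is relying on the dual update formula (a Sherman--Morrison--Woodbury computation) as a black box. The paper's route is more self-contained---no inversion is ever performed---but the minimization-over-$t$ trick is less obvious. Both arguments use the same ingredients $\|\y_c\|\le 2M$ and $\y_c^\T\s_c\ge\kappa_\epsilon$ at the end, so the constants match. Your explicit handling of the $\rho_c=0$ case is also slightly cleaner; the paper's optimizer $t_*$ is formally $0/0$ there, though the conclusion is of course trivial.
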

\begin{proof}
  For any unit vector $\z$, we have
  \begin{equation*}
    \z^\T H_c^+\z = (\z-\rho_c\s_c^\T\z\y_c)^\T H_c(\z-\rho_c\s_c^\T\z\y_c)+\rho_c(\s_c^\T\z)^2.
  \end{equation*}
  Let $t \equiv \s_c^\T\z$ and
  \begin{equation*}
    \phi(t) \equiv (\z-t\rho_c\y_c)^\T H_c(\z-t\rho_c\y_c)+\rho_ct^2.
  \end{equation*}
  Because $H_c$ is positive definite, $\phi(t)$ is convex and
  attaches its minimum at $t_*=\frac{\rho_c\y_c^\T H_c\z}{\rho_c+\rho_c^2\y_c^\T H_c\y_c}$.
  Hence,
  \begin{eqnarray*}
    \z^\T H_c^+\z &\geq& \phi(t_*) \\
      &=& \z^\T H_c\z-t_*\rho_c\y_c^\T H_c\z \\
      &=& \frac{\rho_c\z^\T H_c\z+\rho_c^2(\y_c^\T H_c\y_c\z^\T H_c\z-(\y_c^\T H_c\z)^2)
            }{\rho_c+\rho_c^2\y_c^\T H_c\y_c} \\
      &\geq& \frac{\z^\T H_c\z}{1+\rho_c\y_c^\T H_c\y_c} >0,
  \end{eqnarray*}
  where the penultimate inequality holds because
  the Cauchy-Schwarz inequality is valid for
  the positive definite matrix norm $\|\cdot\|_{H_c}$, i.e.,
  $\|\y_c\|_{H_c}\|\z\|_{H_c} \geq \y_c^\T H_c\z$.
  Therefore, $H_c^+$ is also positive definite.
  From \eqref{skyk}, it is easy to verify that
  \begin{equation*}
    1+\rho_c\y_c^\T H_c\y_c \leq 1+\frac{4M^2\|H_c\|}{\kappa_{\epsilon}}.
  \end{equation*}
  Therefore, we have
  $\z^\T H_c^+\z\geq\frac{\kappa_{\epsilon}}{\kappa_{\epsilon}+4M^2\|H_c\|}\mu_{\min}(H_c).$
  Hence, we get the validation of \eqref{min-eig}.
\end{proof}

Second, we turn to L-BFGS.
Regardless of the selection of $\gamma_c$ in \eqref{Hk-start},
we get the following lemma.


\begin{Lemma}\label{Lem:gamma-bd}
  Suppose that the parameter $\gamma_c$ takes Barzilai-Borwein steps \eqref{BB-steps}
  or its geometric mean \eqref{Dai-steps}. Then, we have
  \begin{equation}\label{gamma-bd}
    \frac{\kappa_{\epsilon}}{4M^2} \leq \gamma_c \leq \frac{4}{\kappa_{\epsilon}}.
  \end{equation}
\end{Lemma}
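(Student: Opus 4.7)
The plan is to treat the three candidates in \eqref{BB-steps}--\eqref{Dai-steps} separately, reducing all three cases to the same set of elementary bounds on $\|\s_c\|$, $\|\y_c\|$, and $\y_c^\T\s_c$. Note first that whenever $\y_c^\T\s_c < \kappa_\epsilon$ the algorithm resets $\gamma_c=1$, and since $\kappa_\epsilon \in (0,1)$ and $M>1$ we have $\frac{\kappa_\epsilon}{4M^2} < 1 < \frac{4}{\kappa_\epsilon}$, so \eqref{gamma-bd} is trivially satisfied. It therefore suffices to treat the case $\y_c^\T\s_c \geq \kappa_\epsilon$.

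The key ingredients are: $\|\s_c\| \leq 2$ and $\|\y_c\| \leq 2M$ (already shown in \eqref{skyk}), together with the Cauchy--Schwarz consequences of $\y_c^\T\s_c \geq \kappa_\epsilon$, namely
\begin{equation*}
  \|\y_c\| \geq \frac{\kappa_\epsilon}{\|\s_c\|} \geq \frac{\kappa_\epsilon}{2}
  \qquad\text{and}\qquad
  \|\s_c\| \geq \frac{\kappa_\epsilon}{\|\y_c\|} \geq \frac{\kappa_\epsilon}{2M}.
\end{equation*}

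For $\gamma_c^{\mathrm{BB1}} = \y_c^\T\s_c/\|\y_c\|^2$, the lower bound follows from $\y_c^\T\s_c \geq \kappa_\epsilon$ and $\|\y_c\|^2 \leq 4M^2$, while the upper bound follows from Cauchy--Schwarz and $\|\y_c\|\geq\kappa_\epsilon/2$:
\begin{equation*}
  \gamma_c^{\mathrm{BB1}} \leq \frac{\|\s_c\|}{\|\y_c\|} \leq \frac{2}{\kappa_\epsilon/2} = \frac{4}{\kappa_\epsilon}.
\end{equation*}
For $\gamma_c^{\mathrm{BB2}} = \|\s_c\|^2/\y_c^\T\s_c$, the upper bound is immediate from $\|\s_c\|^2\leq 4$ and $\y_c^\T\s_c\geq\kappa_\epsilon$, while the lower bound follows from Cauchy--Schwarz together with $\|\s_c\|\geq\kappa_\epsilon/(2M)$ and $\|\y_c\|\leq 2M$. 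Finally, $\gamma_c^{\mathrm{Dai}} = \|\s_c\|/\|\y_c\|$ is the geometric mean of the other two, so the bounds transfer automatically.

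There is no serious obstacle here; the proof is essentially a bookkeeping exercise in Cauchy--Schwarz. The one point that requires a little care is the lower bound for $\gamma_c^{\mathrm{BB1}}$ and the upper bound for $\gamma_c^{\mathrm{BB2}}$: these need both the ``activation'' hypothesis $\y_c^\T\s_c \geq \kappa_\epsilon$ (to push $\|\y_c\|$ away from $0$, or to bound the denominator of BB2) and the global bounds from Lemma \ref{Lem:fg-bnd}. Once both families of bounds are used symmetrically, the conclusion \eqref{gamma-bd} drops out.
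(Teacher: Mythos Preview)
Your proof is correct and follows essentially the same approach as the paper: both arguments first dispose of the reset case $\gamma_c=1$, then combine the a priori bounds $\|\s_c\|\le 2$, $\|\y_c\|\le 2M$ from \eqref{skyk} with the Cauchy--Schwarz consequences of $\y_c^\T\s_c\ge\kappa_\epsilon$ to squeeze all three candidates into the interval \eqref{gamma-bd}. The only cosmetic difference is that the paper writes the three cases as one chained inequality
\[
\frac{\kappa_\epsilon}{4M^2}\le \gamma_c^{\mathrm{BB1}}\le \gamma_c^{\mathrm{Dai}}\le \gamma_c^{\mathrm{BB2}}\le \frac{4}{\kappa_\epsilon},
\]
whereas you verify each candidate separately and then invoke the geometric-mean relation for $\gamma_c^{\mathrm{Dai}}$; the content is the same.
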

\begin{proof}
  If $\y_c^\T\s_c < \kappa_{\epsilon}$, we get $\gamma_c=1$ which satisfies
  the bounds in \eqref{gamma-bd} obviously.

  Otherwise, we have $\kappa_{\epsilon} \leq \y_c^\T\s_c \leq  \|\y_c\|\|\s_c\|$.
  Recalling \eqref{skyk}, we get
  \begin{equation*}
    \frac{\kappa_{\epsilon}}{2}\leq\|\y_c\|\leq 2M \qquad\text{ and }\qquad
    \frac{\kappa_{\epsilon}}{2M}\leq\|\s_c\|\leq 2.
  \end{equation*}
  Hence, we have
  \begin{equation*}
    \frac{\kappa_{\epsilon}}{4M^2} \leq \frac{\y_c^\T\s_c}{\|\y_c\|^2}
      \leq \frac{\|\s_c\|\|\y_c\|}{\|\y_c\|^2}=\frac{\|\s_c\|}{\|\y_c\|}
        =\frac{\|\s_c\|^2}{\|\y_c\|\|\s_c\|}
      \leq \frac{\|\s_c\|^2}{\y_c^\T\s_c} \leq \frac{4}{\kappa_{\epsilon}},
  \end{equation*}
  which means that three candidates $\gamma_c^{\mathrm{BB1}}$, $\gamma_c^{\mathrm{BB2}}$,
  and $\gamma_c^{\mathrm{Dai}}$ satisfy the inequality \eqref{gamma-bd}.
\end{proof}

Third, based on Lemmas \ref{Lem:bfgs-ub}, \ref{Lem:bfgs-lb}, and \ref{Lem:gamma-bd},
we obtain two lemmas as follows.

\begin{Lemma}\label{Lem:Hk-ubd}
  Suppose that the approximation of a Hessian's inverse $H_c$
  is generated by L-BFGS \eqref{Hk-start}--\eqref{Hk-final}.
  Then, there exists a positive constant $C_U\geq 1$ such that
  \begin{equation*}
    \|H_c\| \leq C_U.
  \end{equation*}
\end{Lemma}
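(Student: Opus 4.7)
The plan is to iterate the single-step BFGS bound of Lemma \ref{Lem:bfgs-ub} exactly $L$ times, starting from the initial matrix $H_c^{(0)} = \gamma_c I$. Since $L$ is a fixed integer chosen in advance by the algorithm and $\kappa_\epsilon, M$ are fixed positive constants, the iteration will terminate with an explicit constant bound that does not depend on the outer iteration index $c$.

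First, I would bound the initial matrix. By \eqref{Hk-start}, $\|H_c^{(0)}\| = \gamma_c$, and Lemma \ref{Lem:gamma-bd} gives $\gamma_c \leq 4/\kappa_\epsilon$ regardless of whether we pick $\gamma_c^{\mathrm{BB1}}$, $\gamma_c^{\mathrm{BB2}}$, or $\gamma_c^{\mathrm{Dai}}$ (and if $\y_c^\T\s_c < \kappa_\epsilon$ then $\gamma_c = 1 \leq 4/\kappa_\epsilon$ too, since we may assume $\kappa_\epsilon < 1$). Next, recursion \eqref{Hk-rec} is precisely a single BFGS update \eqref{BFGS-formula} with the pair $(\s_{c-\ell},\y_{c-\ell})$ in place of $(\s_c,\y_c)$, so Lemma \ref{Lem:bfgs-ub} applies and yields
\begin{equation*}
    \|H_c^{(L-\ell+1)}\| \leq \|H_c^{(L-\ell)}\|\left(1+\frac{4M}{\kappa_\epsilon}\right)^{\!2} + \frac{4}{\kappa_\epsilon}
\end{equation*}
for $\ell = L, L-1, \ldots, 1$.

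Then I would unroll this linear recursion. Setting $a \equiv (1 + 4M/\kappa_\epsilon)^2$ and $b \equiv 4/\kappa_\epsilon$, the bound $u_{\ell+1} \leq a\,u_\ell + b$ with $u_0 \leq b$ gives $u_L \leq a^L b + b\sum_{j=0}^{L-1} a^j$ by straightforward induction. Taking
\begin{equation*}
    C_U \equiv \max\!\left\{1,\; \frac{4}{\kappa_\epsilon}\left(1+\frac{4M}{\kappa_\epsilon}\right)^{\!2L} + \frac{4}{\kappa_\epsilon}\sum_{j=0}^{L-1}\left(1+\frac{4M}{\kappa_\epsilon}\right)^{\!2j}\right\}
\end{equation*}
then gives a constant depending only on $L$, $M$, and $\kappa_\epsilon$, and we obtain $\|H_c\| = \|H_c^{(L)}\| \leq C_U$, independent of $c$. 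The max with $1$ just enforces $C_U \geq 1$ as required by the statement.

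The argument is essentially a routine induction, so there is no serious obstacle; the only point deserving a line of care is the edge case $\ell \geq c$ in \eqref{Hk-rec}, where by convention $\rho_{c-\ell} = 0$ and the BFGS update is the identity on $H_c^{(\cdot)}$. In that case Lemma \ref{Lem:bfgs-ub} still holds trivially (both sides reduce so the inequality is preserved), so the unrolling goes through unchanged and even fewer of the $L$ steps actually contribute, making the chosen $C_U$ a valid uniform bound for every $c \geq 1$.
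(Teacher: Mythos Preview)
Your argument is correct and matches the paper's proof essentially line for line: both bound $\|H_c^{(0)}\|\le 4/\kappa_\epsilon$ via Lemma~\ref{Lem:gamma-bd}, then iterate Lemma~\ref{Lem:bfgs-ub} $L$ times to obtain $C_U=\tfrac{4}{\kappa_\epsilon}\sum_{\ell=0}^{L}(1+4M/\kappa_\epsilon)^{2\ell}$, which is exactly your unrolled bound (and is already $\ge 1$ since $\kappa_\epsilon<1$, so the $\max$ is harmless). Your explicit handling of the $\ell\ge c$ edge case is a nice addition that the paper leaves implicit.
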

\begin{proof}
  From Lemma \ref{Lem:gamma-bd} and \eqref{Hk-start},
  we have $\|H_c^{(0)}\|\leq \frac{4}{\kappa_{\epsilon}}$.
  Then, by \eqref{Hk-final}, \eqref{Hk-rec} and Lemma \ref{Lem:bfgs-ub},
  we get
  \begin{eqnarray*}
    \|H_c\|&=&\|H_c^{(L)}\| \\
      &\leq& \|H_c^{(L-1)}\|\left(1+\frac{4M}{\kappa_{\epsilon}}\right)^2+\frac{4}{\kappa_{\epsilon}} \\
      &\leq& \cdots \\
      &\leq& \|H_c^{(0)}\|\left(1+\frac{4M}{\kappa_{\epsilon}}\right)^{2L} +
        \frac{4}{\kappa_{\epsilon}}\sum_{\ell=0}^{L-1}\left(1+\frac{4M}{\kappa_{\epsilon}}\right)^{2\ell} \\
      &\leq& \frac{4}{\kappa_{\epsilon}}\sum_{\ell=0}^L\left(1+\frac{4M}{\kappa_{\epsilon}}\right)^{2\ell}
        \equiv C_U.
  \end{eqnarray*}
  The proof is complete.
\end{proof}

\begin{Lemma}\label{Lem:Hk-lbd}
  Suppose that the approximation of a Hessian's inverse $H_c$
  is generated by L-BFGS \eqref{Hk-start}--\eqref{Hk-final}.
  Then, there exists a constant $0< C_L <1$ such that
  \begin{equation*}
    \mu_{\min}(H_c) \geq C_L.
  \end{equation*}
\end{Lemma}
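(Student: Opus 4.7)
The plan is to mirror the structure of the proof of Lemma \ref{Lem:Hk-ubd}, but using Lemma \ref{Lem:bfgs-lb} in place of Lemma \ref{Lem:bfgs-ub}, unwinding the two--loop L-BFGS recursion \eqref{Hk-rec} step by step. Since the initial matrix $H_c^{(0)} = \gamma_c I$ from \eqref{Hk-start} is positive definite with $\mu_{\min}(H_c^{(0)}) = \gamma_c$, and Lemma \ref{Lem:gamma-bd} provides $\gamma_c \geq \kappa_\epsilon/(4M^2) > 0$, the starting point of the recursion is already under control.

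First I would observe that to apply Lemma \ref{Lem:bfgs-lb} at each intermediate step, I need a uniform upper bound on $\|H_c^{(\ell)}\|$ for $\ell = 0, 1, \ldots, L-1$. But the argument in the proof of Lemma \ref{Lem:Hk-ubd} is monotone in the number of updates: the bound $\frac{4}{\kappa_\epsilon}\sum_{\ell=0}^{L}(1 + 4M/\kappa_\epsilon)^{2\ell} = C_U$ dominates $\|H_c^{(\ell)}\|$ for every intermediate index, so the same constant $C_U$ from Lemma \ref{Lem:Hk-ubd} works throughout the recursion.

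Second, I would iterate Lemma \ref{Lem:bfgs-lb}: it tells us that when BFGS is applied to a positive definite $H$ with $\|H\| \leq C_U$, the update $H^+$ is still positive definite and
\begin{equation*}
    \mu_{\min}(H^+) \;\geq\; \frac{\kappa_\epsilon}{\kappa_\epsilon + 4M^2 C_U}\,\mu_{\min}(H).
\end{equation*}
Since $H_c^{(0)}$ is positive definite, induction along the recursion \eqref{Hk-rec} shows $H_c^{(\ell)}$ stays positive definite for every $\ell$, and applying the bound $L$ times yields
\begin{equation*}
    \mu_{\min}(H_c) \;=\; \mu_{\min}(H_c^{(L)}) \;\geq\; \left(\frac{\kappa_\epsilon}{\kappa_\epsilon + 4M^2 C_U}\right)^{L}\gamma_c \;\geq\; \left(\frac{\kappa_\epsilon}{\kappa_\epsilon + 4M^2 C_U}\right)^{L}\frac{\kappa_\epsilon}{4M^2}.
\end{equation*}
Defining $C_L$ to be the right--hand side (and, if needed, replacing it by $\min\{C_L, 1/2\}$ to enforce $C_L < 1$) finishes the proof.

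There is essentially no hard step here: the bulk of the technical work was already done in Lemma \ref{Lem:bfgs-lb}, and the only subtlety is noticing that the intermediate matrices $H_c^{(\ell)}$ inherit the norm bound $C_U$ from Lemma \ref{Lem:Hk-ubd}, which is immediate since the upper bound there was derived as an increasing partial sum. The key conceptual point to highlight is that although the damped Barzilai--Borwein shift $\gamma_c$ can, in principle, be as small as $\kappa_\epsilon/(4M^2)$, this lower bound is preserved up to a factor depending only on $L$, $M$, and $\kappa_\epsilon$ (all independent of $c$), so the resulting $C_L$ is a genuine constant, which is exactly what Theorem \ref{Th: grad-dir} needs in combination with Lemma \ref{Lem:Hk-ubd}.
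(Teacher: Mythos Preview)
Your proposal is correct and follows essentially the same approach as the paper: bound $\mu_{\min}(H_c^{(0)})$ from below via Lemma~\ref{Lem:gamma-bd}, invoke the uniform bound $\|H_c^{(\ell)}\|\leq C_U$ from Lemma~\ref{Lem:Hk-ubd} on all intermediate matrices, and iterate Lemma~\ref{Lem:bfgs-lb} through the $L$ updates to obtain $C_L = \frac{\kappa_\epsilon}{4M^2}\left(\frac{\kappa_\epsilon}{\kappa_\epsilon+4M^2 C_U}\right)^L$. Your observation that the intermediate norm bounds follow automatically from the monotone partial-sum structure in Lemma~\ref{Lem:Hk-ubd} is exactly what the paper uses implicitly.
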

\begin{proof}
  From Lemma \ref{Lem:gamma-bd} and \eqref{Hk-start},
  we have $\mu_{\min}(H_c^{(0)})\geq \frac{\kappa_{\epsilon}}{4M^2}$.
  Moreover, Lemma \ref{Lem:Hk-ubd} means that
  $\|H_c^{(\ell)}\|\leq C_U$ for all $\ell=1,\ldots,L$.
  Hence, Lemma \ref{Lem:bfgs-lb} implies
  \begin{equation*}
    \mu_{\min}(H_c^{(\ell+1)}) \geq
      \frac{\kappa_{\epsilon}}{\kappa_{\epsilon}+4M^2C_U}\mu_{\min}(H_c^{(\ell)}).
  \end{equation*}
  Then, from \eqref{Hk-final} and \eqref{Hk-rec}, we obtain
  \begin{eqnarray*}
    \mu_{\min}(H_c)&=&\mu_{\min}(H_c^{(L)}) \\
      &\geq&\frac{\kappa_{\epsilon}}{\kappa_{\epsilon}+4M^2C_U}\mu_{\min}(H_c^{(L-1)}) \\
      &\geq& \cdots \\
      &\geq&\left(\frac{\kappa_{\epsilon}}{\kappa_{\epsilon}+4M^2C_U}\right)^L\mu_{\min}(H_c^{(0)}) \\
      &\geq&\frac{\kappa_{\epsilon}}{4M^2}
        \left(\frac{\kappa_{\epsilon}}{\kappa_{\epsilon}+4M^2C_U}\right)^L
        \equiv C_L.
  \end{eqnarray*}
  We complete the proof.
\end{proof}

Finally, the proof of Theorem \ref{Th: grad-dir} is straightforward
from Lemmas \ref{Lem:Hk-ubd} and \ref{Lem:Hk-lbd}.



\end{document}